\tikzstyle{vecArrow} = [thick, decoration={markings,mark=at position
\tikzstyle{innerWhite} = [semithick, white,line width=1.4pt, shorten >= 4.5pt]
\tikzstyle{vecEq} = [thick,
\def\Mbar{\overline{M}}
\def\p{\textsf{p}}
\theoremstyle{definition}
\newtheorem{definition}{Definition}
\newtheorem{theorem}[definition]{Theorem}
\newtheorem{example}[definition]{Example}
\newtheorem{proposition}[definition]{Proposition}
\newtheorem{corollary}[definition]{Corollary}
\newtheorem{lemma}[definition]{Lemma}
\newtheorem{conjecture}[definition]{Conjecture}
\newtheorem{remark}[definition]{Remark}
\newcommand{\QMod}{\mathsf{QMod}}
\newcommand{\Mod}{\mathsf{Mod}}
\newcommand{\BA}{{\mathbb{A}}}
\newcommand{\BC}{{\mathbb{C}}}
\newcommand{\BE}{{\mathbb{E}}}
\newcommand{\BH}{{\mathbb{H}}}
\newcommand{\BL}{{\mathbb{L}}}
\newcommand{\BN}{{\mathbb{N}}}
\newcommand{\BP}{{\mathbb{P}}}
\newcommand{\BQ}{{\mathbb{Q}}}
\newcommand{\BR}{{\mathbb{R}}}
\newcommand{\BZ}{{\mathbb{Z}}}
\newcommand{\CO}{{\mathcal O}}
\newcommand{\CP}{{\mathcal P}}
\newcommand{\CS}{{\mathcal S}}
\DeclareMathOperator{\Aut}{Aut}
\newcommand{\red}{\rm red}
\newcommand{\rel}{\rm rel}
\newcommand{\Ee}{\rm E}
\newcommand{\mg}{\overline{M}_{g}}
\newcommand{\mgn}{\overline{M}_{g,n}}
\newcommand{\bF}{{\mathsf{F}}}
\newcommand{\bB}{{\mathsf{B}}}
\newcommand{\bT}{{\mathsf{T}}}
\newcommand{\bU}{{\mathsf{U}}}
\newcommand{\bP}{\mathsf{P}}
\newcommand{\bG}{\mathsf{G}}
\newcommand{\bH}{\mathsf{H}}
\newcommand{\SL}{{\rm SL}}
\newcommand{\GL}{{\rm GL}}
\newcommand{\ev}{\mathop{\rm ev}\nolimits}
\newcommand{\Dq}{{\mathsf{D}}_q}
\newcommand{\one}{\mathsf{1}}
\newcommand{\pt}{{\mathsf{p}}}
\newcommand{\congpf}{\xymatrix@1@=15pt{\ar[r]^-\sim&}}
\newcommand{\pic}{\mathfrak{Pic}_{g,n}}
\newcommand{\dr}{\mathsf{DR}}
\newcommand{\lan}{\big\langle}
\newcommand{\ran}{\big\rangle}
\newcommand{\dc}{\frac{d}{dC_2}}
\newcommand{\degm}{\underline{\deg}}
\newcommand{\identity}{\mathrm{Id}}
\newcommand{\mi}{\phantom{-}}
\newcommand{\unmu}{\underline{\mu}}
\newcommand{\TCS}{\widetilde{\mathcal{S}}}
	\def\MR#1{}
\begin{document}

\baselineskip=16pt
\parskip=5pt

\title{Curves on K3 surfaces in divisibility two}
\author{Younghan Bae}
\address {ETH Z\"urich, Department of Mathematics}
\email{younghan.bae@math.ethz.ch}
\author{Tim-Henrik Buelles}
\address {ETH Z\"urich, Department of Mathematics}
\email{buelles@math.ethz.ch}
\date{\today}
\maketitle
\begin{abstract} We prove a conjecture of Maulik, Pandharipande, and Thomas expressing the Gromov--Witten invariants of K3 surfaces for divisibility two curve classes in all genus in terms of weakly holomorphic quasimodular forms of level two. Then, we establish the holomorphic anomaly equation in divisibility two in all genus. Our approach involves a refined boundary induction, relying on the top tautological group of the moduli space of smooth curves, together with a degeneration formula for the reduced virtual fundamental class with imprimitive curve classes. We use the double ramification relations with target variety as a new tool to prove the initial condition. The relationship between the holomorphic anomaly equation for higher divisibility and the conjectural multiple cover formula of Oberdieck and Pandharipande is discussed in detail and illustrated with several examples.
\end{abstract}

\setcounter{tocdepth}{1} 
\tableofcontents

\setcounter{section}{-1}

\section{Introduction}\label{sec:introduction} Let $S$ be a complex nonsingular projective K3 surface and $\beta\in H_2(S,\BZ)$ an effective curve class. Gromov--Witten invariants of $S$ are defined via intersection theory on the moduli space $\mgn(S,\beta)$ of stable maps from $n$-pointed genus $g$ curves to $S$. This moduli space comes with a virtual fundamental class. However, the virtual class vanishes for $\beta\neq 0$ so, instead, we use the \textit{reduced class}\footnote{We will identify this class with its image under the cycle class map $A_*\to H_{2*}$.}
\[[\mgn(S,\beta)]^{red}\in A_{g+n}\big(\mgn(S,\beta),\BQ\big)\,.\]
For integers $a_i\geq 0$ and cohomology classes $\gamma_i\in H^*(S,\BQ)$ we define
\[ \big\langle \tau_{a_1}(\gamma_1)\ldots\tau_{a_n}(\gamma_n)\big\rangle_{g,\beta}^S = \int_{[\mgn(S,\beta)]^{red}} \prod_{i=1}^n \psi_i^{a_i}\cup \ev_i^*(\gamma_i)\,, \]
where $\ev_i\colon \mgn(S,\beta) \to S$ is the evaluation at $i$-th marking and $\psi_i$ is the cotangent class at the $i$-th marking. By the deformation invariance of the reduced class, the invariant only depends on the norm $\langle \beta,\beta\rangle$ and the divisibility of the curve class $\beta$.

\subsection{Quasimodularity} Gromov--Witten invariants of K3 surfaces for primitive curve classes are well-understood since the seminal paper by Maulik, Pandharipande, and Thomas~\cite{MPT10}. The invariants are coefficients of weakly holomorphic\footnote{Weakly holomorphic means holomorphic on the upper half plane with possible pole at the cusp $i\infty$.} quasimodular forms with pole of order at most one~\cite[Theorem 4]{MPT10}. For imprimitive curve classes, the quasimodularity is conjectured with the level structure~\cite[Section 7.5]{MPT10}.

The quasimodularity can be stated in a precise sense via elliptic K3 surfaces. Let \[\pi\colon S\to \BP^1\] be an elliptic K3 surface with a section and denote by $B, F\in H_2(S,\BZ)$ the class of the section resp.\ a fiber. For any $m\geq 1$ one defines the descendent potential 
\[ \bF_{g,m}\big(\tau_{a_1}(\gamma_1)\ldots\tau_{a_n}(\gamma_n)\big) = \sum_{h\geq 0} \big\langle \tau_{a_1}(\gamma_1)\ldots\tau_{a_n}(\gamma_n)\big\rangle_{g,mB+hF}^S \, q^{h-m}\,. \]
Note that this generating series involves curve classes $mB+hF$ of different divisibilities, bounded by $m$.

It is convenient to use the following homogenized insertions which will lead to quasimodular forms of pure weight. Let $\one\in H^0(S)$ and $\pt\in H^4(S)$ be the identity resp.\ the point class. Denote \[ W=B+F\in H^2(S)\] and let \[U=\BQ\langle F, W\rangle \subset H^2(S) \] be the hyperbolic plane in $H^2(S)$ and let $U^\perp\subset H^2(S)$ be its orthogonal complement with respect to the intersection form. We only consider second cohomology classes which are pure with respect to the decomposition \[H^2(S,\BQ)\cong \BQ\lan F\ran\oplus \BQ\lan W \ran\oplus U^\perp\,.\] Following \cite[Section 4.6]{BOPY18}, define a modified degree function $\degm$ by
\begin{displaymath}\label{def:degm}
\degm(\gamma) = \left\{ \begin{array}{ll}
 2 & \textrm{if $\gamma=W$ or $\pt$}\,,\\
 1 & \textrm{if $\gamma \in U^{\perp}$}\,,\\
 0 & \textrm{if $\gamma = F$ or $\one$} \,.
  \end{array} \right.
\end{displaymath}

For $m \geq 1$, consider the {\em Hecke congruence subgroup of level $m$}
\[\Gamma_0(m) = \left\{ \begin{pmatrix} a & b \\ c & d \end{pmatrix} \in \SL_2(\BZ)\, \mid c \equiv 0 \mod m \right\}\]
and let $\QMod(m)$ be the space of  quasimodular forms for the congruence subgroup $\Gamma_0(m)\subset \SL_2(\BZ)$. Let $\Delta(q)$ be the modular discriminant
\[ \Delta(q) = q \prod_{n\geq 1} (1-q^n)^{24} \,.\]
Our first main result proves level two quasimodularity of $\bF_{g,2}$, previously conjectured by Maulik, Pandharipande, and Thomas~\cite[Section 7.5]{MPT10}.
\begin{theorem}\label{thm:qmod}
Let $\gamma_1,\ldots, \gamma_n \in H^*(S)$ be homogeneous on the modified degree function $\degm$. Then $\bF_{g,2}$ is the Fourier expansion of a quasimodular form
\[\bF_{g,2}\big(\tau_{a_1}(\gamma_1)\ldots\tau_{a_n}(\gamma_n)\big)\in \frac{1}{\Delta(q)^2} \QMod(2) \]
of weight $2g-12 + \sum_i\degm (\gamma_i)$ with pole at $q=0$ of order at most $2$.
\end{theorem}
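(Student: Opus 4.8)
The plan is to prove Theorem~\ref{thm:qmod} by a double induction on the genus $g$ and the number of markings $n$, comparing against the primitive case (Theorem~4 of \cite{MPT10}, which gives level one quasimodularity in $\frac{1}{\Delta(q)}\QMod(1)$ with a simple pole) as the base of the induction. The guiding principle is that for fixed weight, level, and pole order the space of quasimodular forms is finite-dimensional; it therefore suffices to (i) show that $\bF_{g,2}$ has the asserted shape up to a controlled, finite-dimensional ambiguity, and (ii) pin down that ambiguity by an independent computation.

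First I would carry out the boundary induction on the source moduli space $\mgn$. Restricting the descendent series to each boundary divisor, and using the degeneration formula for the reduced virtual class with imprimitive curve classes, expresses the restriction as a sum of products of descendent potentials of strictly smaller $(g,n)$, in which at most one factor carries the reduced class (of divisibility one or two) while the remaining factors are ordinary and already known to be quasimodular. Since the class $2B+hF$ splits into pieces of divisibility one and two, the factors contribute levels dividing $2$; as products of quasimodular forms are again quasimodular with the weights added and the levels combined, the boundary restriction of $\bF_{g,2}$ lies in $\frac{1}{\Delta(q)^2}\QMod(2)$ of the asserted weight. The weight bookkeeping is forced by $\degm$: each geometric insertion contributes $\degm(\gamma_i)$, the reduced K3 theory supplies the shift $-12$, and the factor $\frac{1}{\Delta(q)^2}$, reflecting the $q^{h-2}$ normalization and the divisibility two, accounts for the pole of order at most $2$.

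The boundary restrictions determine $\bF_{g,2}$ only up to its interior part. Here the refined input is that a tautological class on $\mgn$ is determined by its boundary restrictions together with its image in the top tautological group of the moduli of \emph{smooth} curves, and that this socle is one-dimensional (Looijenga's vanishing bound together with Faber's non-vanishing). Consequently the ambiguity left by the boundary induction is a single generating series in $q$ times the socle generator, so that
\[\bF_{g,2}=\big(\text{boundary-determined quasimodular part}\big)+\phi_g(q)\cdot\big(\text{socle class}\big),\]
and it remains only to show that the scalar series $\phi_g(q)$ is itself quasimodular of the correct weight and pole order.

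I expect the determination of $\phi_g(q)$, the initial condition, to be the main obstacle, since it is precisely the part invisible to the source-curve boundary. The tool is the double ramification relations with target variety $S$: applying Pixton's formula for the double ramification cycle $\dr$ to stable maps to $S$ in class $2B+hF$ produces nontrivial identities among the Gromov--Witten invariants that project onto the socle and thereby compute $\phi_g(q)$ in terms of lower-genus data, quasimodular by induction. Combined with the degeneration formula, which relates the divisibility two initial condition back to the primitive theory, this both identifies $\phi_g(q)$ and verifies that it lies in $\frac{1}{\Delta(q)^2}\QMod(2)$ with a pole of order at most two. Reassembling the boundary-determined part with $\phi_g(q)$ then yields the claimed membership of $\bF_{g,2}$ and closes the induction.
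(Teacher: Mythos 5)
Your high-level frame (induction on $(g,n)$, boundary classes handled by the splitting of the reduced class, the top tautological group of $M_{g,n}$ controlling what the boundary misses, DR relations entering at the initial condition) matches the paper's skeleton, but the mechanism you propose for the interior part has a genuine gap. First, the group you call the socle is not one-dimensional: the Looijenga--Faber statement concerns $R^{g-2}(M_g)$, whereas the relevant group here is $R^{g-1}(M_{g,n})$, which by Buryak--Shadrin--Zvonkine is spanned by $\psi_1^{g-1},\ldots,\psi_n^{g-1}$; the ambiguity is therefore several series, one per marking, not a single $\phi_g(q)$. More seriously, your plan to determine this ambiguity by double ramification relations \emph{in every genus} is precisely what the paper does not do and explicitly defers to future work; DR relations are used only once, for the genus-two series $\bF_{2,m}\big(\tau_1(\gamma)\tau_0(\pt)\big)$ (Proposition~\ref{prop:genustwo}). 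The actual engine in higher genus is the degeneration of the \emph{target} to the normal cone of an elliptic fiber, $S \rightsquigarrow S\cup_E(\BP^1\times E)$: point insertions are specialized to the bubble, and the contact-profile vanishing of Corollary~\ref{cor:degeneration} forces the genus on the $S$-side to drop. In the case of exactly one point insertion, the BSZ spanning set (and, for $g\geq 3$, the alternative basis $\psi_1^{g-1},\psi_1\psi_2^{g-2}$ of $R^{g-1}(M_{g,2})$) is used to arrange that the point descendent is $\tau_a(\pt)$ with $a\geq 1$, so that the dimension constraint makes the bubble absorb it with a genus drop. Without this step, or a proved substitute, your induction does not close: nothing in your argument establishes that the series multiplying the $\psi_i^{g-1}$-classes are quasimodular, and the finite-dimensionality of spaces of quasimodular forms cannot help before quasimodularity of those series is known.

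Two further points. You conflate two distinct operations: restriction of the reduced class to boundary divisors of the source $\mgn$ (where at most one factor carries the reduced class and the virtual factors are constants) versus the degeneration formula for the target; the latter produces \emph{relative} invariants of $(S,E)$ and $(\BP^1\times E,E)$, so the induction also needs the relative/absolute correspondence and the (quasi)modularity of the bubble geometry, neither of which appears in your outline. Finally, the base of your induction is misidentified: the primitive case of Maulik--Pandharipande--Thomas does not determine the divisibility-two genus-zero series; one needs the full imprimitive Yau--Zaslow/KKV formula, together with the genus-one and genus-two cases (Propositions~\ref{prop:initial}, \ref{prop:initialfromKKV} and \ref{prop:genustwo}), as the initial condition.
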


\subsection{Holomorphic anomaly equation}
In the physics literature, the (conjectural) {\em holomorphic anomaly equation}~\cite{BCOV93,BCOV94} predicts hidden structures of the Gromov--Witten partition function associated to Calabi--Yau varieties. For the past few years, there has been an extensive work to prove the holomorphic anomaly equation in many cases: local $\BP^2$~\cite{LP18}, the quintic threefold \cite{CGL18,GJR18}, K3 surface with primitive curve classes~\cite{OP18}, elliptic fibration~\cite{OP19} and $\BP^2$ relative to a smooth cubic~\cite{BFGW20}. 

Every quasimodular form for $\Gamma_0(m)$ can be written uniquely as a polynomial in $C_2$ with coefficients which are modular forms for $\Gamma_0(m)$ \cite[Proposition 1]{KZ95}. Here, \[C_2(q)= -\frac{1}{24}E_2(q)\] is the renormalized second Eisenstein series. Assuming quasimodularity, the holomorphic anomaly equation fixes the non-holomorphic parameter of the Gromov--Witten partition function of K3 surfaces in terms of lower weight partition functions: it computes the derivative of $\bF_{g,m}$ with respect to the $C_2$ variable. See \cite{OP18} for the proof of holomorphic anomaly equation for K3 surfaces with primitive curve classes and \cite{OP19} for the holomorphic anomaly equation associated to elliptic fibrations.

Define an endomorphism~\cite[Section 0.6]{OP18} \[\sigma\colon H^*(S^2) \to H^*(S^2)\]
by the following assignments: \[\sigma(\gamma \boxtimes \gamma')=0\]
if $\gamma$ or $\gamma'\in H^0(S)\oplus \BQ\lan F \ran \oplus H^4(S)$, and for $\alpha,\alpha'\in U^{\perp}$, 
\begin{align*}
    &\sigma(W\boxtimes W)=\Delta_{U^{\perp}}, \; \sigma(W\boxtimes \alpha)= -\alpha\boxtimes F,\\
    & \sigma(\alpha\boxtimes W)=-F\boxtimes\alpha,\; \sigma(\alpha,\alpha')=\langle\alpha,\alpha'\rangle F\boxtimes F\,,
\end{align*}
where $\Delta_{U^\perp}$ denotes the diagonal class for the intersection pairing on~$U^\perp$. We will view $\sigma$ as the exterior product $\sigma_1\boxtimes\sigma_2$ via K\"unneth decomposition.

Recall the virtual fundamental class for trivial curve classes which will play a role for the holomorphic anomaly equation. For $\beta=0$ we have an isomorphism
\[\mgn(S,0)\cong \mgn\times S\]
and the virtual class is given by
\begin{equation*}
    [\mgn(S,0)]^{vir}=\begin{cases}
               [\Mbar_{0,n}\times S] & \textup{if $g=0$}\,,\\
               c_2(S)\cap [\Mbar_{1,n}\times S] & \textup{if $g=1$}\,,\\
               0 & \textup{if $g\geq2$}\,.
            \end{cases}
\end{equation*}
Also, consider the pullback under the morphism $\pi\colon S \to \BP^1$ of the diagonal class of $\BP^1$
\[\Delta_{\BP^1} = 1\boxtimes F + F \boxtimes 1 = \sum_{i=1}^2 \delta_i\boxtimes \delta_i^\vee\,.\]
Define the generating series\footnote{Here, instead of descendent insertions we use a tautological class $\alpha\in R^*(\mgn)$, see the comment in Section~\ref{sec:compatibilityI}}
\begin{align}
    &\bH_{g,m}\big(\alpha;\gamma_1,\ldots,\gamma_n\big) \label{eq:fullHAE}\\ 
    &= \bF_{g-1,m}\big(\alpha;\gamma_1,\ldots,\gamma_n,\Delta_{\BP^1}\big)\nonumber \\
    &\mi+2\sum_{\substack{g=g_1+g_2\\ \{1,\ldots,n\}=I_1\sqcup I_2\\ \,i\in\{1,2\}}}\bF_{g_1,m}\big(\alpha_{I_1};\gamma_{I_1},\delta_i\big)\,\bF^{vir}_{g_2}\big(\alpha_{I_2};\gamma_{I_2},\delta_i^{\vee}\big) \nonumber\\
    &\mi-2\sum_{i=1}^n \bF_{g,m}\big(\alpha\psi_i;\gamma_1,\ldots,\gamma_{i-1},\pi^*\pi_*\gamma_i,\gamma_{i+1},\ldots,\gamma_n\big)\nonumber \\
    &\mi+\frac{20}{m}\sum_{i=1}^n\langle\gamma_i,F\rangle\bF_{g,m}\big(\alpha;\gamma_1,\ldots,\gamma_{i-1},F,\gamma_{i+1},\ldots,\gamma_n\big)\nonumber\\
    &\mi-\frac{2}{m}\sum_{i<j}\bF_{g,m}\big(\alpha;\gamma_1,\ldots,\underbrace{\sigma_1(\gamma_i,\gamma_j)}_{i \textup{th}},\ldots,\underbrace{\sigma_2(\gamma_i,\gamma_j)}_{j \textup{th}},\ldots,\gamma_n\big)\,,\nonumber
\end{align}
where $\bF^{vir}$ denotes the generating series for virtual fundamental class. In most cases this term vanishes. The equation takes almost the same form for arbitrary $m$, only the last two terms acquire a factor of $\frac{1}{m}$. The appearance of these factors is explained in Section~\ref{sec:HAEm}, see also Example~\ref{ex:HAEm}. We conjecture that the holomorphic anomaly equation has the following form:
\begin{conjecture}\label{conj:HAEm}
\begin{equation}\label{eq:hae}
    \dc \bF_{g,m}\big(\alpha;\gamma_1,\ldots,\gamma_n\big) = \bH_{g,m}\big(\alpha;\gamma_1,\ldots,\gamma_n\big) \,.
\end{equation}
\end{conjecture}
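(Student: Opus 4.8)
The plan is to prove the identity \eqref{eq:hae} for all $m$ by the same boundary induction that establishes the divisibility two case, upgraded to track the level-$m$ structure. Since $\dc$ only makes sense once $\bF_{g,m}$ is known to be a polynomial in $C_2$ over modular forms for $\Gamma_0(m)$, I would first establish the level-$m$ analogue of Theorem~\ref{thm:qmod}: that $\bF_{g,m}(\tau_{a_1}(\gamma_1)\cdots\tau_{a_n}(\gamma_n))$ lies in $\frac{1}{\Delta(q)^m}\QMod(m)$ of weight $2g-12+\sum_i\degm(\gamma_i)$. This quasimodularity is itself provable by induction on $(g,n)$: restriction to the boundary of $\mgn$ expresses the invariants in terms of lower genus and fewer markings via the degeneration formula for the reduced class, while the interior contribution is controlled by the one-dimensional socle $R^{g-2}(\M_g)\cong\BQ$ and reduces to a modular initial value. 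Once both $\bF_{g,m}$ and $\bH_{g,m}$ are known to be quasimodular of the same weight and level, \eqref{eq:hae} becomes an equality of two quasimodular forms; it then suffices to compare them as elements of the tautological ring $R^*(\mgn)$ valued in $q$-series, obtained by pushing the reduced class forward along the stabilization map $\mgn(S,\beta)\to\mgn$.

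The inductive engine is the degeneration formula for the reduced virtual fundamental class with imprimitive curve classes. Degenerating $S$ to a normal crossing union of two elliptic surfaces splits a class $mB+hF$ into pairs $(m_1B+h_1F,\,m_2B+h_2F)$ with $m_1+m_2=m$, and -- because the reduced class carries one extra dimension from the holomorphic symplectic form -- exactly one of the two factors keeps the reduced class while the other receives the ordinary virtual class. This is precisely the asymmetric ``$\bF\cdot\bF^{vir}$'' structure of the splitting term in \eqref{eq:fullHAE}. I would define the error series $\mathcal{E}_{g,m}=\dc\bF_{g,m}-\bH_{g,m}$ and show that its restriction to every boundary divisor of $\mgn$ vanishes by the inductive hypothesis: each node either lowers the genus (matching the $\bF_{g-1,m}(\ldots,\Delta_{\BP^1})$ term) or separates the curve (matching the genus- and marking-splitting term), and $\dc$ commutes with these restrictions because it differentiates only in the $q$-parameter. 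The $\psi$-class term and the two correction terms arise from the variation of the polarization and the $\sigma$-operator contractions; producing their coefficients $\frac{20}{m}$ and $\frac{2}{m}$ exactly through the level-$m$ degeneration is the delicate bookkeeping.

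Having matched all boundary restrictions, $\mathcal{E}_{g,m}$ is represented by a class supported on the interior $\M_{g,n}$. Pushing forward along the map forgetting the markings and descendents lands in $R^*(\M_g)$, where the top tautological group $R^{g-2}(\M_g)\cong\BQ$ is one-dimensional. Hence $\mathcal{E}_{g,m}$ is pinned down by a single scalar for each $(g,m)$, the initial condition. I would compute this scalar using the double ramification relations with target $\BP^1$ (or $S$): evaluating both sides against the socle generator reduces the interior comparison to an identity among DR cycles on $\M_{g,n}$ pushed forward from the universal target, which the DR relations make explicit. This is the new input that sidesteps computing every low-genus invariant by hand.

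The main obstacle is the initial condition for general $m$, and it is genuinely more than bookkeeping. For $m=2$ the multiple-cover contributions feeding the base case are controlled, but for higher $m$ the class $mB$ admits many intermediate divisibilities under degeneration, and the scalar produced by the DR relations becomes entangled with the conjectural multiple-cover formula of Oberdieck and Pandharipande. Concretely, verifying that the DR relations \emph{force} the correct $\frac{1}{m}$-weighted initial value, rather than leaving it ambiguous up to an unknown multiple-cover correction, is the crux, and it is why the argument is unconditional only for $m=2$. For general $m$ the holomorphic anomaly equation and the multiple-cover conjecture are expected to be logically interlocked, so an unconditional proof of \eqref{eq:hae} for all $m$ would likely proceed in tandem with, or conditionally on, that formula.
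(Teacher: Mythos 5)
You are candid that your argument does not close for general $m$, and that honesty matches the paper: Conjecture~\ref{conj:HAEm} is proved there only conditionally (Proposition~\ref{prop:mcfimplieshae}: the multiple cover formula for all $d\mid m$ implies the holomorphic anomaly equation) and unconditionally only for $m=2$ (Theorem~\ref{thm:hae}). However, you miss the paper's actual mechanism for the conditional statement, which is far lighter than re-running the geometric induction. Writing the multiple cover formula in Hecke form, $\bF_{g,m}=m^{\deg-\degm}\,\bT_{m,2g-2+\degm}\,\bF_{g,1}$, and using the commutator $\dc\,\bT_{m,\ell+2}=m\,\bT_{m,\ell}\,\dc$ of Lemma~\ref{lem:operators}(v), one gets $\dc\,\bF_{g,m}=m^{\deg-\degm+1}\,\bT_{m,2g-4+\degm}\,\dc\,\bF_{g,1}$; substituting the primitive holomorphic anomaly equation of Oberdieck--Pandharipande and checking, term by term, how $\deg$ and $\degm$ shift in each summand of $\bH_{g,1}$ reproduces exactly $\bH_{g,m}$, with the factors $\frac{20}{m}$ and $\frac{2}{m}$ falling out of the two terms where $\degm$ drops by $2$ while $\deg$ is unchanged. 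No boundary induction, degeneration, or DR input enters this implication at all; the geometric machinery is reserved for the unconditional $m=2$ case.

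Beyond this, several mechanisms in your sketch are misstated. First, the degeneration used is the normal cone of a smooth elliptic fiber, $S\rightsquigarrow S\cup_E\BP^1\times E$, and the class $mB+hF$ does \emph{not} split as $(m_1B+h_1F,\,m_2B+h_2F)$ with $m_1+m_2=m$: by Theorem~\ref{thm:degeneration} the full section degree stays on the $S$-side ($\beta_1=mB+h_1F$), and $m$ enters instead as the contact partition $\mu\vdash m$ along $E$. The asymmetric $\bF\cdot\bF^{vir}$ term in $\bH_{g,m}$ arises from restriction to boundary divisors of $\mgn$ (domain splitting), where the ordinary virtual class vanishes for all nonzero curve classes, not from the target degeneration. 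Second, your interior step is not how the paper's induction terminates: the relevant input is $R^{g-1}(M_{g,n})=\BQ\langle\psi_1^{g-1},\ldots,\psi_n^{g-1}\rangle$ (Buryak--Shadrin--Zvonkine), not the socle $R^{g-2}(M_g)$; pushing forward to $M_g$ by forgetting markings is not available since the evaluation insertions cannot be discarded, and there is no ``single scalar per $(g,m)$'' to pin down. The induction instead bottoms out at explicit low-genus descendent series, and those initial conditions are proved in the paper \emph{for all} $m$ (Propositions~\ref{prop:initial}, \ref{prop:initialfromKKV} and \ref{prop:genustwo}, via the KKV formula and the DR relations on the Picard stack). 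Consequently, your diagnosis of the obstruction for $m\geq 3$ is off target: it is not an ambiguity in a DR-produced initial value, but the combinatorics of the bubbling step. The vanishing of Corollary~\ref{cor:degeneration} only kills the genus-preserving contributions when the point insertions force the relative profile $\underline{\mu_\omega}$, so for divisibility $m$ one can only reduce to at most $m-1$ point insertions (as noted in the remark closing Section~\ref{sec:ProofQmodHAE}); for $m=2$ this leaves a single $\tau_a(\pt)$, which the BSZ basis handles, while for $m\geq 3$ series with two or more unremovable point insertions remain and the induction fails to close. Your broader point that the anomaly equation and the multiple cover formula are interlocked for general $m$ agrees with the paper's discussion, but as a proof strategy the proposal both overlooks the short conditional argument and misidentifies where the unconditional one breaks.
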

For primitive curve classes, the holomorphic anomaly equation is proven in \cite{OP18}. In higher divisiblity, it is precisely equation~\eqref{eq:hae} that would be implied by the conjectural multiple cover formula for imprimitve Gromow--Witten invariants of K3 surfaces. We explain this in the following section. We prove Conjecture~\ref{conj:HAEm} unconditionally when $m=2$:
\begin{theorem}\label{thm:hae}
For any $g\geq 0$,
\begin{equation}\label{eq:hae2}
    \dc \bF_{g,2}\big(\alpha;\gamma_1,\ldots,\gamma_n\big) = \bH_{g,2}\big(\alpha;\gamma_1,\ldots,\gamma_n\big) \,.
\end{equation}
\end{theorem}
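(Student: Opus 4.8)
The plan is to prove \eqref{eq:hae2} by showing that the difference
$D_g(\alpha) := \dc\bF_{g,2}(\alpha;\gamma_1,\ldots,\gamma_n) - \bH_{g,2}(\alpha;\gamma_1,\ldots,\gamma_n)$
vanishes for every $g$ and every tautological class $\alpha$. By Theorem~\ref{thm:qmod} all generating series on both sides lie in $\frac{1}{\Delta(q)^2}\QMod(2)$; since $C_2$ has weight two and is holomorphic at the cusp, $\dc$ lowers the weight by two and preserves the pole bound, and a direct weight count (using $\degm(\Delta_{\BP^1})=0$) shows both sides have weight $2g-14+\sum_i\degm(\gamma_i)$ and pole of order at most two. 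Thus $D_g(\alpha)$ is a quasimodular form in a finite-dimensional space, and I would induct on the genus $g$ together with the complexity of $\alpha\in R^*(\mgn)$.

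For the inductive step I would first treat the boundary-supported part of $\alpha$. Writing $\alpha$ through the standard generators, its boundary part is a pushforward along the gluing maps $\overline{M}_{g-1,n+2}\to\mgn$ and $\overline{M}_{h,|I|+1}\times\overline{M}_{g-h,n-|I|+1}\to\mgn$. To control $D_g$ on such classes I would invoke the degeneration formula for the reduced virtual class in divisibility two: it rewrites the restriction of the reduced K3 theory to a boundary stratum in terms of the (reduced and ordinary) theories of the two components, tracking how the class $2B+hF$ distributes across the normalization and which side carries the reduced summand. Feeding this into the induction hypothesis---that \eqref{eq:hae2} holds in genus $<g$, and in genus $g$ against boundary classes of lower complexity---shows that $D_g(\alpha)=0$ whenever $\alpha$ is boundary-supported; equivalently, the two sides of \eqref{eq:hae2} agree after restriction to every boundary divisor of $\mgn$.

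It remains to pin down the interior contribution. A tautological class on $\mgn$ that is not accounted for by the boundary is governed, modulo boundary, by the top nonvanishing tautological group of the moduli space $\M_{g,n}$ of smooth curves, which by the socle results of Looijenga and Graber--Vakil is one-dimensional. Hence the whole identity in genus $g$ collapses to a single scalar identity of quasimodular forms---the initial condition. I would establish this using the double ramification ($\dr$) relations with target variety $S$: applying the $\dr$ relations on $\mgn(S,2B+hF)$ and integrating against the reduced class and the insertions produces precisely the numerical relation fixing the interior coefficient of $D_g$, forcing $D_g=0$ and completing the induction.

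The main obstacle is twofold. The heaviest technical ingredient is the degeneration formula for the reduced class in divisibility two: unlike the ordinary virtual class, the reduced class carries an extra cohomology class coming from the holomorphic symplectic form on $S$ that does not split naively, and for imprimitive $2B+hF$ the sum over distributions of the curve class mixes summands of divisibility one and two, so matching the reduced and non-reduced contributions on the two sides is delicate. The genuinely new and conceptually hardest point is the initial condition: setting up the $\dr$ relations with target so that their integral reproduces exactly the holomorphic anomaly coefficient is where the argument must be pushed furthest, and it is the step most likely to require input beyond the primitive case of \cite{OP18}.
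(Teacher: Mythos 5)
Your overall strategy---induct on $(g,n)$, handle boundary-supported $\alpha$ by the splitting property of the reduced class, and pin down what is left by a top-tautological-group argument plus an initial condition---misses the mechanism that actually drives the paper's induction, and the step where you invoke the socle is where the argument breaks. The reduction of the interior contribution to ``a single scalar identity'' is only even plausible when $\deg(\alpha)=g-1$, i.e.\ when the dimension constraint allows exactly one point insertion; and even there the relevant statement (Buryak--Shadrin--Zvonkine \cite{BSZ16}, used as \eqref{eq:ToptautGp} in Section~\ref{sec:ProofQmodHAE}) is that $R^{g-1}(M_{g,n})$ is spanned by $\psi_1^{g-1},\ldots,\psi_n^{g-1}$, not that it is one-dimensional. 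When two or more insertions are the point class, $\deg(\alpha)\leq g-2$ and $R^{\deg(\alpha)}(M_{g,n})$ is not governed by any socle statement whatsoever, so your induction has no way to close in precisely the case that carries most of the content. The paper closes it by degenerating the \emph{target}, $S\rightsquigarrow S\cup_E(\BP^1\times E)$, and specializing point insertions to the bubble so that the genus on the K3 side drops (Cases 2 and 3 of Section~\ref{sec:ProofQmodHAE}); note that your use of the phrase ``degeneration formula'' refers instead to restriction to boundary strata of $\mgn$, which is the splitting property of the reduced class and a different (and much easier) statement than Theorem~\ref{thm:degeneration}.

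Because the real induction passes through the target degeneration, proving \eqref{eq:hae2} requires showing that the holomorphic anomaly equation is compatible with the degeneration formula: this is Proposition~\ref{prop:compatibility}, and it forces one to formulate and use \emph{relative} holomorphic anomaly equations for $(S,E)$ and $(\BP^1\times E,E)$, including rubber series; the $(\BP^1\times E,E)$ case is supplied by \cite{OP19}, and the relative vs.\ absolute correspondence \cite{MP06} converts the relative $(S,E)$ series back into absolute ones of lower genus. None of this appears in your proposal. Finally, your initial condition is not available as stated: a DR-relation argument ``fixing the interior coefficient in any genus'' is exactly what the paper does \emph{not} have---the double ramification relations are used only for the single genus-two series $\bF_{2,2}\big(\tau_1(\gamma)\tau_0(\pt)\big)$ (Proposition~\ref{prop:genustwo}), and the general-genus version is explicitly deferred to future work. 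The remaining initial data come from the KKV formula (genus $0$, genus $1$, and $\bF_{g,m}\big(\tau_{g-1}(\pt)\big)$ via Proposition~\ref{prop:initialfromKKV}), after which Proposition~\ref{prop:mcfimplieshae} converts the multiple cover formula into the holomorphic anomaly equation. As written, your proof therefore has genuine gaps both in the inductive step (the $\deg(\alpha)\leq g-2$ case and the missing relative/rubber compatibility) and in the base case.
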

\subsection{Multiple cover formula}
Motivated by the Katz--Klemm--Vafa (KKV) formula, Oberdieck and Pandharipande conjectured a formula which computes imprimitive invariants from the primitive invariants:
\begin{conjecture}(\cite[Conjecture C2]{OP16})\label{conj:georgrahul}
For a primitive curve class $\beta$,
\begin{align} &\big\langle \tau_{a_1}(\gamma_1)\ldots\tau_{a_n}(\gamma_n)\big\rangle_{g,m\beta} \label{eq:mcf}\\
&= \sum_{d\mid m} d^{2g-3+\deg} \big\langle \tau_{a_1}(\varphi_{d,m}(\gamma_1))\ldots\tau_{a_n}(\varphi_{d,m}(\gamma_n))\big\rangle_{g,\varphi_{d,m}\left(\frac{m}{d}\beta\right)}\,.\nonumber \end{align}
\end{conjecture}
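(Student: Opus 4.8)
The plan is to prove~\eqref{eq:mcf}, for $m=2$ where the necessary inputs are available, by induction on the weight $w=2g-12+\sum_i\degm(\gamma_i)$, reducing the identity at each weight to a finite initial-value check. After specializing to an elliptic K3 surface with section and assembling the invariants into the potentials $\bF_{g,m}$, formula~\eqref{eq:mcf} becomes an identity between Fourier series in $q$. Each summand on the right is a reduced invariant whose curve class $\varphi_{d,m}(\tfrac{m}{d}\beta)$ has divisibility dividing $m$, so for $m=2$ it has divisibility one or two; by Theorem~\ref{thm:qmod} in divisibility two and by the primitive quasimodularity of~\cite{MPT10} in divisibility one, the left-hand side and every summand on the right is the $q$-expansion of a quasimodular form of the common weight $w$ in $\tfrac{1}{\Delta(q)^2}\QMod(2)$, with a pole of order at most two at the cusp. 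Hence the difference $D_w$ of the two sides lies in a fixed finite-dimensional space, and it suffices to prove $D_w=0$.

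The second step removes the $C_2$-dependence of $D_w$. Every divisibility occurring for $m=2$ carries a known holomorphic anomaly equation—Theorem~\ref{thm:hae} in divisibility two and~\cite{OP18} in divisibility one—so, writing $D_w$ as a polynomial in $C_2$ with $\Gamma_0(2)$-modular coefficients via~\cite{KZ95}, I would apply $\dc$ to both sides of~\eqref{eq:mcf}. The left side yields $\bH_{g,2}$ and the right side is differentiated termwise. The decisive feature of~\eqref{eq:fullHAE} is that every potential appearing in $\bH_{g,2}$ has weight exactly $w-2$: the genus drops in the first term, while the diagonal, descendent and correction terms keep the genus but trade an insertion for one of strictly smaller modified degree (a class of type $F$ or $\one$, or an output of $\sigma$), and $\dc$ itself lowers the weight by two. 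Consequently the inductive hypothesis—that~\eqref{eq:mcf} holds at weight $w-2$—may be substituted into the boundary, diagonal and descendent terms of $\bH_{g,2}$; the normalizing factors $\tfrac{1}{m}$ and $\tfrac{20}{m}$ are exactly what make this substitution reproduce the termwise $C_2$-derivative of the right-hand side, so that $\bH_{g,2}$ respects the multiple cover decomposition and $\dc D_w=0$. Thus $D_w$ is a genuine modular form for $\Gamma_0(2)$ with a pole of order at most two at the cusp.

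The final step is to show that this modular form vanishes, and I expect this initial-condition verification to be the principal obstacle. A form for $\Gamma_0(2)$ of weight $w$ with a pole of order at most two is determined by finitely many of its leading $q$-coefficients, that is, by the reduced invariants in the smallest imprimitive classes. To pin these down I would use the double ramification relations with target $S$: pushing forward the $\dr$ cycle on $\mgn(S,\beta)$ yields tautological relations among reduced invariants of imprimitive and primitive classes, and the goal is to show that they force the leading coefficients of the divisibility-two potential into precisely the combination prescribed by the operators $\varphi_{d,2}$; the degeneration formula for the reduced class with imprimitive curve classes offers a complementary, more geometric route to the same structural decomposition, though controlling the reduced obstruction theory under degeneration is itself delicate. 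The difficulty is structural: the holomorphic anomaly equation constrains only the $C_2$-depth of $D_w$, so at each weight a full $\Gamma_0(2)$-modular form must be fixed on its own, and—unlike the primitive case—there is no formal guarantee that the double ramification relations supply enough independent constraints, uniformly in $w$, to eliminate this ambiguity. Extending the argument to arbitrary $m$ would in addition require the holomorphic anomaly equation in every divisibility dividing $m$, together with an understanding of how these relations interact with the full divisor structure of Conjecture~\ref{conj:georgrahul}; this interaction is, I expect, where the central difficulty lies.
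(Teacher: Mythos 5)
Your proposal attempts to prove something the paper itself does not prove: the statement is labelled a conjecture, and the authors establish it only in genus $0$ and $1$ for all $m$ (Proposition~\ref{prop:initial}), for certain genus-$2$ descendents (Proposition~\ref{prop:genustwo} and the $m=2$ case of $\tau_0(\pt)^2$), and in the fiber direction; they state explicitly that for higher genus their method does not seem suitable to establish the multiple cover formula even for $m=2$, and they prove the weaker Theorems~\ref{thm:qmod} and~\ref{thm:hae} instead. Your use of exactly those two theorems as input is legitimate (their proofs rely only on the low-genus cases of the conjecture, so there is no circularity), and your first two steps are correct: Theorem~\ref{thm:qmod} together with \cite{MPT10} and Proposition~\ref{prop:level} places the difference $D_w$ of the two sides of Conjecture~\ref{conj:mcf} in the finite-dimensional space $\frac{1}{\Delta^2}\QMod(2)$ of weight $w$, and running the computation of Proposition~\ref{prop:mcfimplieshae} backwards --- using Theorem~\ref{thm:hae}, the primitive equation of \cite{OP18} and Lemma~\ref{lem:operators}(v) --- shows that the inductive hypothesis at weight $w-2$ forces $\dc D_w=0$, i.e.\ $D_w\in\frac{1}{\Delta^2}\Mod(2)$.

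The fatal gap is your last step, and you partly acknowledge it yourself. A weakly holomorphic modular form of weight $w$ for $\Gamma_0(2)$ with pole of order at most two is not killed by a bounded number of coefficient checks: since $\Delta^2 D_w$ is a holomorphic form of weight $w+24$ for an index-three subgroup of $\SL_2(\BZ)$, the valence formula requires vanishing of roughly the first $(w+24)/4$ Fourier coefficients, a number growing linearly in the genus. So your induction demands, at every weight, verification of the multiple cover formula for the invariants in classes $2B+hF$ with $h$ up to order $w/4$, and the proposal supplies no mechanism for these verifications. The two tools you point to are precisely the ones the paper shows to be inadequate for this task: the degeneration formula is compatible with the holomorphic anomaly equation (Proposition~\ref{prop:compatibility}), but, as the authors emphasize at the end of Section~\ref{sec:RelativeHAE}, it is not even clear what compatibility of the degeneration formula with the multiple cover formula should mean, because degeneration intertwines invariants of $S$ and of $\BP^1\times E$ nontrivially (Example~\ref{ex:genus2}); and the double ramification relations are exploited in the paper only for isolated genus-$2$ potentials, where the authors remark that finding monomials yielding nontrivial relations is already very subtle because of cancellations. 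In short, your argument correctly reduces Conjecture~\ref{conj:georgrahul} for $m=2$ to fixing the holomorphic ambiguity at each weight, but fixing that ambiguity is the open problem, not a finite computation, and nothing in the proposal (or in the paper) closes it.
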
\medskip 

The invariants on the right hand side are with respect to primitive curve classes\footnote{Section~\ref{sec:mcf} contains all relevant definitions.}. Assuming this formula, we can deduce the holomorphic anomaly equation:

\begin{proposition}\label{prop:mcfimplieshae}
Let $m\geq 1$. Assume the multiple cover formula~\eqref{eq:mcf} holds for all curve classes of divisibility $d\mid m$ and all descendent insertions. Then the holomorphic anomaly equation~\eqref{eq:hae} holds. \end{proposition}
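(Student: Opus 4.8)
The plan is to reduce the level-$m$ holomorphic anomaly equation \eqref{eq:hae} to the primitive case $m=1$, which is already established in \cite{OP18}, by feeding the multiple cover formula into both sides. Summed over curve classes, \eqref{eq:mcf} takes the generating series form
\[ \bF_{g,m}(\alpha;\gamma_1,\ldots,\gamma_n)=\sum_{d\mid m} d^{2g-3+\deg}\,\bF_{g,1}\big(\alpha;\varphi_{d,m}(\gamma_1),\ldots,\varphi_{d,m}(\gamma_n)\big)\Big|_{q\mapsto q^d}\,, \]
where $\deg=\sum_i\deg\gamma_i$ and the $\varphi_{d,m}$ are the operators of Section~\ref{sec:mcf}. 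First I would record the equivariance properties of $\varphi_{d,m}$ that drive the reduction: it is degree preserving, it fixes $F$, $\one$ and the diagonal $\Delta_{\BP^1}$, it commutes with $\pi^*\pi_*$, it scales the fiber pairing by $\langle\varphi_{d,m}\gamma,F\rangle=\tfrac{d}{m}\langle\gamma,F\rangle$, and it intertwines with $\sigma$ by the same factor, $\sigma\circ(\varphi_{d,m}\otimes\varphi_{d,m})=\tfrac{d}{m}(\varphi_{d,m}\otimes\varphi_{d,m})\circ\sigma$.

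The analytic heart of the argument is a compatibility between $\dc$ and the substitution $q\mapsto q^d$. Since $E_2(q)-d\,E_2(q^d)$ is a modular form of weight $2$ for $\Gamma_0(d)$, and $d\mid m$ gives $\Gamma_0(m)\subset\Gamma_0(d)$, the function $C_2(q^d)=\tfrac1d C_2(q)+(\text{modular of weight }2)$ is affine linear in the variable $C_2(q)$ with leading coefficient $\tfrac1d$. Writing any element of $\QMod(m)$ uniquely as a polynomial in $C_2$ with modular coefficients \cite{KZ95}, the chain rule yields the key identity
\[ \dc\big[f(q^d)\big]=\tfrac1d\big(\dc f\big)(q^d)\,. \]
Applying $\dc$ to the displayed generating series form, pulling the derivative inside each summand via this identity, and invoking the primitive equation $\dc\bF_{g,1}=\bH_{g,1}$ of \cite{OP18}, I obtain
\[ \dc\bF_{g,m}(\alpha;\gamma_*)=\sum_{d\mid m} d^{2g-4+\deg}\,\bH_{g,1}\big(\alpha;\varphi_{d,m}(\gamma_*)\big)\Big|_{q\mapsto q^d}\,. \]

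It then remains to show that the right-hand side equals $\bH_{g,m}(\alpha;\gamma_*)$, which I would verify term by term across the five contributions of \eqref{eq:fullHAE} by expanding each term of $\bH_{g,m}$ through the multiple cover formula. For the genus-lowering term, $\Delta_{\BP^1}$ raises $\deg$ by one, so the drop $g\to g-1$ together with the extra insertion shifts the exponent from $2g-3$ to $2g-4$, matching the $\tfrac1d$ produced by $\dc$. For the splitting term, $\bF^{vir}_{g_2}$ involves only the trivial curve class, hence carries no substitution and no power of $d$; the dimension constraints forcing $\bF^{vir}_{g_2}\neq 0$ (so $g_2\in\{0,1\}$) are exactly the relations aligning the surviving exponent with $2g-4+\deg$. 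For the descendent term, $\pi^*\pi_*$ lowers the degree by one and commutes with $\varphi_{d,m}$, again reproducing the shift by $\tfrac1d$. Finally, the two terms carrying the prefactor $\tfrac1m$ match because of the scaling $\langle\varphi_{d,m}\gamma,F\rangle=\tfrac{d}{m}\langle\gamma,F\rangle$ and the analogous intertwining of $\sigma$: the factor $\tfrac{d}{m}$ combines with $d^{2g-4+\deg}$ and the $\tfrac1m$ on the level-$m$ side to give the identical coefficient on both sides for every divisor $d$.

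The hard part will be this last step: checking that the five correction terms of $\bH_{g,m}$, after expansion through the multiple cover formula, assemble into the single weighted superposition $\sum_{d\mid m} d^{2g-4+\deg}\bH_{g,1}(\varphi_{d,m}(\gamma_*))|_{q\mapsto q^d}$. The delicate points are establishing the precise equivariance of $\varphi_{d,m}$ with respect to $\Delta_{\BP^1}$, $\pi^*\pi_*$ and $\sigma$, and confirming that the genus-and-degree selection rules governing the nonvanishing of $\bF^{vir}_{g_2}$ reproduce exactly the exponent shift dictated by the $\tfrac1d$ from the derivative identity. Once these are in place the two displays agree coefficientwise in $d$, and the proposition follows.
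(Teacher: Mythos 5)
Your overall strategy is the same as the paper's: differentiate the multiple cover formula, commute $\dc$ past the divisor sum, invoke the primitive holomorphic anomaly equation of~\cite{OP18}, and match the result with $\bH_{g,m}$ term by term. However, your translation of~\eqref{eq:mcf} into a generating-series identity is incorrect, and this is a genuine gap. Applying~\eqref{eq:mcf} to the class $mB+hF$ (of divisibility $\gcd(m,h)$), the divisor-$d$ term involves the primitive invariant of the class $\varphi_{d,m}\big(\tfrac{m}{d}B+\tfrac{h}{d}F\big)$, whose norm is $\tfrac{2m(h-m)}{d^2}$, since isometries preserve the norm and the norm is quadratic in the class. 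Hence the coefficient of $q^{h-m}$ in $\bF_{g,m}$ must be drawn from the coefficient of $q^{m(h-m)/d^2}$ in $\bF_{g,1}$; the re-indexing operator that does this is $\bB_d\bU_{m/d}$, as in Lemma~\ref{lem:equiv}, and \emph{not} the bare substitution $q\mapsto q^d$, which pulls the coefficient of $q^{(h-m)/d}$, i.e.\ the wrong primitive invariant (the two agree only for $d=m$). Your displayed expansion $\bF_{g,m}=\sum_{d\mid m}d^{2g-3+\deg}\,\bF_{g,1}(\ldots)\big|_{q\mapsto q^d}$ is therefore false; already for $m=2$ its $d=1$ term omits the operator $\bU_2$.

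This error propagates through the rest of the argument. Since $\dc\,\bU_{m/d}=\tfrac{m}{d}\,\bU_{m/d}\,\dc$ in addition to $\dc\,\bB_d=\tfrac1d\,\bB_d\,\dc$, the correct derivative rule is $\dc\,\bB_d\bU_{m/d}=\tfrac{m}{d^2}\,\bB_d\bU_{m/d}\,\dc$, equivalently the commutator $\dc\,\bT_{m,\ell+2}=m\,\bT_{m,\ell}\,\dc$ of Lemma~\ref{lem:operators}(v); so differentiation produces the factor $\tfrac{m}{d^2}$, not the $\tfrac1d$ on which your exponent bookkeeping rests. Your ``key identity'' is true but is only half of the required statement. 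Moreover, the equivariance properties you attribute to $\varphi_{d,m}$ are internally inconsistent: an isometry cannot both fix $F$ and satisfy $\langle\varphi_{d,m}\gamma,F\rangle=\tfrac{d}{m}\langle\gamma,F\rangle$; with the paper's choice $\varphi_{d,m}=\phi_{d/m}$ one has $\phi_{d/m}(F)=\tfrac{m}{d}F$, so neither $F$ nor $\Delta_{\BP^1}$ is fixed, and $\varphi_{d,m}$ does not commute with $\pi^*\pi_*$. The repair is precisely the paper's route: use Lemma~\ref{lem:equiv} to write $\bF_{g,m}=m^{\deg-\degm}\,\bT_{m,2g-2+\degm}\,\bF_{g,1}$, apply the commutator above to get $\dc\bF_{g,m}=m^{\deg-\degm+1}\,\bT_{m,2g-4+\degm}\,\bH_{g,1}$, and only then carry out your term-by-term comparison, tracking both $\deg$ and $\degm$; it is this double bookkeeping that produces the factors $\tfrac1m$ in the last two terms of $\bH_{g,m}$.
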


Given this proposition, it seems a natural strategy to prove the multiple cover formula in divisibility two and deduce, as a consequence, the holomorphic anomaly equation. Indeed, our method does follow this logic for $m=2$ and for low genus: we verify the multiple cover formula for $g\leq 2$, see Example \ref{ex:genus2}. For higher genus, however, our method does not seem suitable to achieve this. Instead, our proof of Theorem~\ref{thm:qmod} provides an algorithm, based on the degeneration to the normal cone of a smooth elliptic fiber $E\subset S$, to reduce divisibility two invariants to low genus invariants for which the multiple cover formula is known\footnote{The genus $0$ and genus $1$ cases are proved by Lee and Leung in~\cite{LL05,LL06}. Their proof involves a degeneration formula in symplectic geometry which is not possible in algebraic geometry. We present an algebro-geometric approach using the KKV formula.}. The degeneration formula intertwines invariants of $S$ with invariants of $\BP^1\times E$ in a non-trivial way. This phenomenon is illustrated in Example~\ref{ex:genus2} for the genus~$2$ invariants
\[\big\langle \tau_0(\pt)^2 \big\rangle_{2,2\beta}\,.\]

\subsection{Hecke operator}\label{sec:hecke}
In Section~\ref{sec:mcf} we apply Conjecture~\ref{conj:georgrahul} to an elliptic K3 surface to deduce a conjectural multiple cover formula for the descendent potentials $\bF_{g,m}$. The multiple cover formula for any divisibility~$m$ is then simply a \emph{Hecke operator of the wrong weight} acting on the primitive potential~$\bF_{g,1}$. Indeed, the weight of $\bF_{g,1}$ (and conjecturally of $\bF_{g,m}$) is $2g-12+\degm$, whereas the Hecke operator has the weight of a descendent potential attached to elliptic curves, namely $2g-2+\degm$. This operator can be expressed in terms of Hecke operators (of the correct weight) and translation $q\mapsto q^d$. Together with the holomorphic anomaly equation for primitive curve classes~\cite{OP18} this naturally leads to the above conjecture for the holomorphic anomaly equation for higher divisibility.

\subsection{Plan of the paper}  We prove the quasimodularity and the holomorphic anomaly equation by induction on the genus and the number of markings. In Section~\ref{sec:operators}, we discuss Hecke theory for weakly holomorphic quasimodular forms. This leads to a natural formulation of the multiple cover formula in Section~\ref{sec:mcf} and the imprimitive holomorphic anomaly equation in Section~\ref{sec:HAEm}. In Section~\ref{sec:RelativeHAE}, compatibility of the holomorphic anomaly equation with the degeneration formula is presented. In Section~\ref{sec:InitialCondition}, we derive the multiple cover formula, which implies the holomorphic anomaly equation, for genus~$0$, genus~$1$ and some genus~$2$ decendent invariants from the KKV formula. The genus~$2$ computation relies on double ramification relations with target variety. This result serves as the initial condition for our induction.
In Section~\ref{sec:ProofQmodHAE}, we use previous results to prove Theorem~\ref{thm:qmod} and \ref{thm:hae}. The property of the top tautological group  $R^{g-1}(M_{g,n})$ reduces higher genus cases to lower genus invariants discussed in Section~\ref{sec:InitialCondition}. 


\subsection*{Acknowledgements}
We are grateful to G.\ Oberdieck, R.\ Pandharipande, J.\ Shen, L.\ Wu and Q.\ Yin for many discussions on the Gromov--Witten theory of K3 surfaces. We want to thank D.\  Radchenko for useful suggestions on quasimodular forms. Finally, we thank the anonymous referee whose suggestion greatly improved this article.

Y.~B. was supported by ERC-2017-AdG-786580-MACI and the Korea Foundation for Advanced Studies. T.-H.~B. was supported by ERC-2017-AdG-786580-MACI.

The project has received funding from the European Research Council (ERC) under the European Union Horizon 2020 research and innovation program (grant agreement No. 786580).

\section{Quasimodular forms and Hecke operators}\label{sec:operators}
We recall basic properties of quasimodular forms and Hecke operators, see~\cite{Ko93,Z08}, in particular~\cite[pp.\ 156--163]{Ko93} and \cite[Ch.\ 3, Section~3]{Ko93}. The Hecke theory for weakly holomorphic quasimodular forms however seems to be less well documented. We thus also include some proofs.

The following operators will play a central role. For any Laurent series
\begin{equation}\label{eq:Laurent}f(q) = \sum_{n=-\infty}^{\infty} a_n q^n \end{equation}
and $d\in\BZ_{>0}$ we define
\[ \Dq f = q \frac{d}{dq} f\,, \quad \bB_d f= \sum_{n=-\infty}^{\infty}a_{n} q^{dn}\, ,\quad \bU_d f=\sum_{n=-\infty}^{\infty} a_{dn} q^n\, . \]

We will apply these operators to the Laurent series associated to certain modular functions. For this we briefly review the definition of modular forms.

\subsection{Quasimodular forms}
Let $\BH=\{\tau\in \BC \mid \rm{Im}(\tau)>0\}$ be the upper half-plane. The group $\GL^+_2(\BR)$ of real $2\times 2$-matrices with positive determinant acts on $\BH$ via 
\[ A \tau = \frac{a\tau+b}{c\tau +d}\,,\quad A=\begin{pmatrix} a&b\\c&d\end{pmatrix}\in \GL^+_2(\BR)\,. \] 
Let $f\colon \BH \to \BC$ be a function and let
\[ q=e^{2\pi i \tau}\,, \quad y=\rm{Im}(\tau)\,.\]
For $k\in\BZ$ define the \emph{$k$-th slash operator} 
\[ (f|_k A) (\tau) = \det(A)^{k/2}(c\tau+d)^{-k} f(A\tau)\,. \]
 
\begin{definition} \label{def:qmod}
A {\em quasimodular form} of weight~$k$ for $\SL_2(\BZ)$ is a holomorphic function~$f\colon \BH \to \BC$ admitting a Fourier expansion
\begin{equation}\label{eq:Fourier} f(q) = \sum_{n=0}^{\infty} a_n q^n\,,\quad |q|<1\,,\end{equation}
such that there exist $p\geq 0$ and holomorphic functions~$f_r$, $r=0,\ldots,p$ satisfying the following conditions:
\begin{enumerate}[label=(\roman*)]
\item the (non-holomorphic) function $\widehat {f}= \sum_{r=0}^{p} f_r y^{-r}$
satisfies the transformation law
\[ \widehat{f}|_k\gamma = \widehat{f} \textrm{ for all } \gamma\in\SL_2(\BZ)\,,\]
\item $f=f_0$,
\item each $f_r$ has an expansion of the form~\eqref{eq:Fourier}.
\end{enumerate}

If $p=0$ then $f$ is called a {\em modular form}. We denote the space of modular resp.\ quasimodular forms by $\Mod$ and $\QMod$.

\end{definition}
\begin{remark}\label{rem:qmod}
 If $\widehat{f}= \sum_{r=0}^{p} f_r y^{-r}$ as above with $f_p\neq 0$, then each $f_r$ is a quasimodular form of weight $k-2r$, see~\cite[Proposition 20]{Z08}. Moreover, the last one, i.e.\ $f_p$ is in fact modular (of weight~$k-2p$). The following structural results are well-known
~\cite[Proposition 4, Proposition 20]{Z08}
 \[ \Mod = \BC[C_4,C_6]\,, \quad \QMod=\BC[C_2,C_4,C_6]\,,\]
 where \[C_{2i}(q) = -\frac{B_{2i}}{2i\cdot(2i)!}E_{2i}(q)\] is the renormalized $2i$-th Eisenstein series. The notion (i) defines the space $\mathsf{AHM}$ of \emph{almost holomorphic modular forms} and the assignment $\widehat{f}\mapsto f$ is an isomorphism
 \[ \mathsf{AHM} \to \QMod\,.\]
 Under this map, differentiation with respect to $\frac{1}{8\pi y}$ corresponds to differentiation with respect to $C_2$.
\end{remark}
 
The modular functions considered in this paper will usually have poles at the cusp $\tau=i\infty$ corresponding to $q=0$. We will refer to these functions as \emph{weakly holomorphic} with pole of specified order. We want to clarify this terminology in the context of quasimodular forms.

\begin{definition}
A function $f$ is said to be {\em weakly holomorphic quasimodular with pole of order at most}~$m\geq 0$, if $f$ satisfies the conditions in Definition~\ref{def:qmod} except that each $f_r$ is allowed to have a pole at the cusp $i\infty$ of order at most~$m$. If $p=0$ then $f$ is called a weakly holomorphic modular form with pole of order at most~$m$.
\end{definition}

By parallel arguments as in~\cite[Proposition 20]{Z08}, the assertions in Remark~\ref{rem:qmod} hold analogously for weakly holomorphic quasimodular forms. In particular, $f_p$ is weakly holomorphic modular with pole of order at most~$m$. The space of weakly holomorphic modular forms is generated  by $\frac{1}{\Delta}$ over $\Mod$, where
\[ \Delta(q) = q \prod_{n\geq 1} (1-q^n)^{24}\]
is the modular discriminant.\footnote{See~\cite{DJ08} where the authors examine an explicit basis of the space of weakly holomorphic modular forms.} As a consequence, 
\[f_p\in \frac{1}{\Delta^m}\Mod\]
and since $f_p$ is of weight~$k-2p$ (and there are no non-zero modular forms of negative weight) we have $k\geq 2p-12m$.

For quasimodular forms we include the following observation.

\begin{lemma}\label{lem:weaklyholo}
The space of weakly holomorphic quasimodular forms with pole of order at most~$m$ is given by
\[ \frac{1}{\Delta^m}\QMod\,.\]
\end{lemma}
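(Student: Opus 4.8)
The plan is to prove both inclusions of the equality $\{\text{weakly holomorphic quasimodular forms with pole order} \le m\} = \frac{1}{\Delta^m}\QMod$. The inclusion $\frac{1}{\Delta^m}\QMod \subseteq \{\cdots\}$ is the easy direction: given $g \in \QMod$, write it via its almost holomorphic completion $\widehat{g} = \sum_{r=0}^p g_r y^{-r}$ with each $g_r$ holomorphic at the cusp. Since $\Delta$ is a (nonvanishing on $\BH$) modular form of weight $12$ with a simple zero at $q=0$, multiplying the completion by $\Delta^{-m}$ — which is genuinely holomorphic on $\BH$ and modular — produces $\widehat{g}/\Delta^m = \sum_r (g_r/\Delta^m) y^{-r}$, and this still satisfies the transformation law in weight $k + 12m$ because $\Delta^{-m}$ transforms with weight $-12m$ and has no $y$-dependence. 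Each coefficient $g_r/\Delta^m$ has a pole of order at most $m$ at the cusp since $g_r$ is holomorphic there and $1/\Delta^m$ has a pole of order exactly $m$. Hence $f = g/\Delta^m$ satisfies the weakly holomorphic conditions with pole order $\le m$.

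For the reverse inclusion, I would start from a weakly holomorphic quasimodular $f$ with pole order $\le m$ and completion $\widehat{f} = \sum_{r=0}^p f_r y^{-r}$. The strategy is to multiply through by $\Delta^m$ and show the result is an honest (holomorphic-at-cusp) quasimodular form. Set $\widehat{h} = \Delta^m \widehat{f} = \sum_r (\Delta^m f_r) y^{-r}$; this satisfies the slash transformation law in weight $k + 12m$ by the same product computation as above. Each $\Delta^m f_r$ is holomorphic on $\BH$, and because $f_r$ has a pole of order at most $m$ while $\Delta^m$ vanishes to order exactly $m$ at the cusp, the product $\Delta^m f_r$ is holomorphic at the cusp as well. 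Therefore $h = \Delta^m f = h_0$ is a genuine quasimodular form, giving $f = h/\Delta^m \in \frac{1}{\Delta^m}\QMod$.

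The key technical point I would want to nail down carefully is that the almost holomorphic completion behaves multiplicatively in the expected way, namely that $\widehat{\Delta^m f} = \Delta^m \widehat{f}$. Since $\Delta^m$ is modular (so its own completion is itself, with no $y^{-r}$ terms for $r>0$), and the completion is compatible with products of an almost holomorphic modular form by a genuine modular form, this follows once one verifies that the completion operator $\widehat{f} \mapsto f$ and its inverse are well-behaved under multiplication by elements of $\Mod$. The excerpt grants us (via Remark~\ref{rem:qmod} and the parallel arguments cited from~\cite[Proposition 20]{Z08}) that the structural results for quasimodular forms extend to the weakly holomorphic setting, in particular that $f_p$ lies in $\frac{1}{\Delta^m}\Mod$; I would lean on exactly this machinery.

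The main obstacle, I expect, is the pole-order bookkeeping at the cusp rather than the transformation law, which is essentially formal. One must confirm that \emph{all} the coefficient functions $f_r$, and not merely the top one $f_p$, have poles bounded by order $m$; this is part of the definition of weakly holomorphic quasimodular with pole order $\le m$, so it is available, but it is the hypothesis that makes $\Delta^m f_r$ holomorphic at the cusp simultaneously for every $r$. Once that uniformity is in hand, the argument closes cleanly, and the bidirectional inclusion yields the stated equality.
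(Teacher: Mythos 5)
Your proof is correct and is essentially the paper's own argument: multiply the completion $\widehat{f}=\sum_r f_r y^{-r}$ by $\Delta^m$ (resp.\ divide by $\Delta^m$ for the easy direction), note that the transformation law carries over because $\Delta^m$ is modular of weight $12m$ with no $y$-dependence, and use the order-$m$ zero of $\Delta^m$ at the cusp against the order-$\le m$ poles of \emph{every} coefficient $f_r$; the paper writes out one direction and declares the other analogous, exactly as you do. Two small remarks: in your first inclusion the resulting weight should be $k-12m$, not $k+12m$, and the worry about $\widehat{\Delta^m f}=\Delta^m\widehat{f}$ is unnecessary, since the definition only requires exhibiting \emph{some} valid collection of functions for $\Delta^m f$, and $(\Delta^m f_r)_r$ serves without any uniqueness or multiplicativity statement about the completion.
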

\begin{proof}

Let $f$ be a weakly holomorphic form with pole of order at most~$m$ and weight~$k$ and let \[ \widehat{f}= \sum_{r=0}^{p} f_r y^{-r}\,, \] 
with $f=f_0$. Multiplying by $\Delta^m$ we have for all $\gamma\in\SL_2(\BZ)$
\[ (\Delta^m \widehat{f})|_{k+12m}\gamma = (\Delta^m)|_{12m}\gamma \cdot (\widehat{f})|_k\gamma = \Delta^m \widehat{f}\,. \]
Since each $\Delta^m f_r$ is holomorphic at $i\infty$ this proves
\[ f \in \frac{1}{\Delta^m}\QMod\,. \]
Analogous argument shows that the quotient of any quasimodular form by $\Delta^m$ defines a weakly holomorphic quasimodular form with pole of order at most~$m$.
\end{proof}
 
\subsection{Hecke operators}
Let $m\in\BN$ and consider the set of integral matrices of determinant $m$
\[ H_m = \Big\{ \begin{pmatrix} a&b\\c&d\end{pmatrix} \mid a,b,c,d\in\BZ\,, ad-bc=m\Big\} \,.\]
The modular group $\SL_2(\BZ)$ acts on $H_m$ by left multiplication. The classical \emph{Hecke operators} $\bT_m$ acting on modular forms~$f$ of weight~$k$ are defined by~\cite[Section 4.1]{Z08}
\[ \bT_mf = m^{k/2-1} \sum_{\gamma\in \SL_2(\BZ)\setminus H_m} f|_k\gamma\,.\]
This definition is equivalent to~\cite[Ch.\ 3, Proposition 38]{Ko93}
\begin{equation}\label{eq:Hecke} \bT_m = \sum_{ad=m} a^{k-1} \bB_a\bU_d\,. \end{equation}
The action of \eqref{eq:Hecke} naturally extends to the action of the $q$-expansion of weakly holomorphic quasimodular forms. We prove that the action again defines a weakly holomorphic quasimodular form. For simplicity (we will only use this case) we restrict to the case when $f$ has a pole of order at most one.

\begin{lemma}
Let $f\in\frac{1}{\Delta}\QMod$ be of weight~$k$. Then $\bT_mf$ is a weakly holomorphic quasimodular form of weight~$k$ with pole of order at most~$m$, i.e.\ 
\[ \bT_m f\in \frac{1}{\Delta^m}\QMod\,. \]
\end{lemma}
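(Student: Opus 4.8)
The plan is to realise $\bT_m$ not merely as the operator \eqref{eq:Hecke} on $q$-expansions, but simultaneously as the classical averaging operator $F\mapsto m^{k/2-1}\sum_{\gamma\in\SL_2(\BZ)\backslash H_m}F|_k\gamma$ applied to the \emph{almost holomorphic} form attached to $f$. Concretely, I would take $\widehat f=\sum_{r=0}^p f_r\,y^{-r}$ to be the almost holomorphic modular form witnessing that $f$ is weakly holomorphic quasimodular of weight $k$; since $f$ has a pole of order at most one, the analogue of Remark~\ref{rem:qmod} shows each $f_r$ is a Laurent series in $q$ with pole of order at most one. The point of passing to $\widehat f$ is that condition (i) of Definition~\ref{def:qmod} reads $\widehat f|_k\delta=\widehat f$ for all $\delta\in\SL_2(\BZ)$, so $\widehat f$ transforms like a genuine weight-$k$ object and the slash-operator description of $\bT_m$ becomes available.

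Next I would carry out the computation using the upper-triangular coset representatives $\gamma=\left(\begin{smallmatrix} a & b\\ 0 & d\end{smallmatrix}\right)$ with $ad=m$, $0\le b<d$. For such $\gamma$ one has $\mathrm{Im}(\gamma\tau)=\tfrac{a}{d}\,y$, so each factor $y^{-r}$ in $\widehat f$ is scaled by $(d/a)^{r}$ under $|_k\gamma$, while averaging over $b$ turns $f_r$ into $\bB_a\bU_d f_r$ exactly as in the derivation of \eqref{eq:Hecke}. Collecting powers of $y$, I expect to obtain
\[ \bT_m\widehat f=\sum_{r=0}^{p} g_r\,y^{-r},\qquad g_r=m^{r}\sum_{ad=m}a^{\,k-2r-1}\,\bB_a\bU_d f_r, \]
whose $y^0$-coefficient is $g_0=\sum_{ad=m}a^{k-1}\bB_a\bU_d f_0=\bT_m f$, recovering \eqref{eq:Hecke} applied to $f=f_0$.

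It then remains to check two things. For invariance: right multiplication by any $\delta\in\SL_2(\BZ)$ permutes the cosets $\SL_2(\BZ)\backslash H_m$, and since $\widehat f|_k\sigma=\widehat f$ for $\sigma\in\SL_2(\BZ)$, reindexing the sum yields $(\bT_m\widehat f)|_k\delta=\bT_m\widehat f$. Hence $\bT_m\widehat f=\sum_r g_r y^{-r}$ is itself an almost holomorphic modular form of weight $k$, and it exhibits $\bT_m f=g_0$ as weakly holomorphic quasimodular of weight $k$, once each $g_r$ is known to be a Laurent series with a pole of finite order. For the pole order: in the sum defining $g_r$, every summand with $d\ge 2$ has $\bU_d$ annihilate the $q^{-1}$ term of $f_r$ and is therefore holomorphic at the cusp, whereas the single term $d=1,\,a=m$ equals $m^{k-r-1}\bB_m f_r$, whose pole order is $m$ times that of $f_r$, hence at most $m$. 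Thus every $g_r$, and in particular $g_0=\bT_m f$, has pole of order at most $m$, and Lemma~\ref{lem:weaklyholo} gives $\bT_m f\in\frac{1}{\Delta^m}\QMod$.

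The main obstacle I anticipate is the bookkeeping in the second step: one must verify that applying the slash-operator $\bT_m$ to $\widehat f$ is compatible, grade by grade in $y^{-r}$, with the $q$-series operator \eqref{eq:Hecke}, and that the non-holomorphic corrections reorganise cleanly into $\sum_r g_r y^{-r}$. This rests entirely on the identity $\mathrm{Im}(\gamma\tau)=\tfrac{a}{d}y$ for the triangular representatives, which is precisely what converts the weight-$k$ slash action into the weight-shifted operators acting on each $f_r$. Once this compatibility is in place, both the invariance and the pole-order estimate are routine.
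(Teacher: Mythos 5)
Your proof is correct, and it takes a genuinely different route from the paper's. The paper works inside Movasati's formalism of differential modular forms \cite{Mo08}: it introduces the modified slash operator $(f||_kA)(\tau)=\sum_{r}(-c)^r(c\tau+d)^r(f_r|_kA)(\tau)$ acting through the tuple of associated functions $f_r$, averages it over $\SL_2(\BZ)\setminus H_m$, and then must confront the subtlety that invariance of $\bT_mf$ under $||_k$ is not even a well-posed statement until the associated functions of $\bT_mf$ are known to exist; this circularity is resolved by passing to Movasati's period domain $\CP\subset\GL_2(\BC)$. You instead transport the problem through the isomorphism $\mathsf{AHM}\to\QMod$, $\widehat f\mapsto f$, of Remark~\ref{rem:qmod}: since the ordinary weight-$k$ slash operator acts on arbitrary functions $\BH\to\BC$, the average $\bT_m\widehat f$ is defined outright, its $\SL_2(\BZ)$-invariance is the classical coset-permutation argument with nothing to resolve, and your computation with the triangular representatives (using $\mathrm{Im}(\gamma\tau)=\tfrac{a}{d}\,y$) correctly yields $\bT_m\widehat f=\sum_r g_ry^{-r}$ with $g_r=m^r\sum_{ad=m}a^{k-2r-1}\bB_a\bU_df_r$: holomorphic coefficients, consistent with $f_r$ being quasimodular of weight $k-2r$, whose $r=0$ term is exactly the $q$-series operator \eqref{eq:Hecke} applied to $f$. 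The computational core is shared by both proofs --- triangular coset representatives recover $\sum_{ad=m}a^{k-1}\bB_a\bU_d$, and the pole bound comes from $\bU_d$ with $d\geq2$ annihilating the $q^{-1}$-coefficient of $f_r$ while the single term $\bB_mf_r$ produces a pole of order at most $m$, after which Lemma~\ref{lem:weaklyholo} finishes. What your route buys is self-containedness: the invariance issue that the paper outsources to \cite{Mo08} disappears, because your invariance statement concerns an actual function on $\BH$ rather than a tuple of associated functions. The price is exactly the bookkeeping you flagged, namely checking that the slash action on the completion $\widehat f$ is compatible grade by grade in $y^{-1}$ with the $q$-series operators, and your formula for $g_r$ settles that.
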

\begin{proof}
In~\cite{Mo08} it is shown that $\bT_m$ defines a map $\QMod\to\QMod$ preserving the weight. We briefly recall the key arguments for $f\in\QMod$. The definition of quasimodular forms is equivalent to the condition\footnote{This notion is called `differential modular form' in~\cite{Mo08}. As pointed out in~\cite[Section 5.3]{Z08}, this notion is equivalent to be a quasimodular form.}
\[ (f|_k\gamma) (\tau) = \sum_{r=0}^p \left(\frac{c}{c\tau+d}\right)^r f_r(\tau) \textrm{ for all } \gamma=\begin{pmatrix} a&b\\c&d\end{pmatrix}\in\SL_2(\BZ)\,, \]
where $f_r$ are as in Definition~\ref{def:qmod}. Defining a modification of the slash operator for quasimodular forms\footnote{This definition differs from~\cite[Equation 12]{Mo08} by a factor $m^{-p}$, where~$p$ is the depth of~$f$. Our definition of the Hecke operator differs by the same factor.}
\[ (f||_k A) (\tau) = \sum_{r=0}^p (-c)^r (c\tau+d)^r (f_r|_k A)(\tau)\textrm{ for } A=\begin{pmatrix} a&b\\c&d\end{pmatrix}\in\GL^+_2(\BR)\,, \]
then the quasimodularity is equivalent to
\[ f||_k\gamma = f \textrm{ for all } \gamma\in \SL_2(\BZ)\,.\]
This leads to a parallel treatment of Hecke operators as in the classical context of modular forms. By \cite[Proposition 2]{Mo08} we have
\[ f||_k (\gamma A) = f||_k A\,, \textrm{ for all }\gamma\in\SL_2(\BZ)\,, A\in \GL^+_2(\BR) \]
and we define
\[ \bT_mf = m^{k/2-1} \sum_{A\in \SL_2(\BZ)\setminus H_m} f||_k A\,.\]

This definition is then independent of a choice of representatives of $\SL_2(\BZ)\setminus H_m$. To conclude that $\bT_m f$ is a quasimodular form, we would like to argue that it is invariant under $(-)||_k\gamma$ for all $\gamma\in\SL_2(\BZ)$. This statement, however, is not sensible at the moment\footnote{We are grateful to the referee for pointing out this subtle detail.} because the definition of $(-)||_k\gamma$ relies on the existence of associated functions $f_r$. This technicality is resolved in~\cite[Section 2.4, 2.5]{Mo08} by considering a certain period domain $\CP$ and identifying quasimodular forms as holomorphic functions on $\CP$, which are left $\SL_2(\BZ)$-invariant and satisfy a transformation property for a right action of the subgroup of upper triangular matrices. The domain $\CP$ is contained in $\GL_2(\BC)$ and it contains the upper-half plane $\BH$. The actions are given by left resp.\ right multiplication. The argument carries over to weakly holomorphic quasimodular forms without change.

A particular set of representatives for $\SL_2(\BZ)\setminus H_m$ is given by
\[ \Big\{\gamma_b=\begin{pmatrix} a&b\\0&d\end{pmatrix} \mid a,d\in\BN, ad=m, 0\leq b<d \Big\}\,. \]
Note that $(-)||_k\gamma_b = (-)|_k\gamma_b$ because the terms for $r>0$ vanish. Since
\[ \bU_d f (\tau) = \frac{1}{d}\sum_{0\leq b<d} f\left(\frac{\tau+b}{d}\right)\,, \]
we thus recover equation~(\ref{eq:Hecke}):
\begin{align*}\bT_m f(\tau) &= m^{k/2-1} \sum_{ \substack{ad=m\\0\leq b<d}} d^{-k} m^{k/2} f\left(\frac{a\tau +b}{d}\right)\\
&= \sum_{ad=m} a^{k-1} \bB_a\bU_d f (\tau)\,.
\end{align*}

For weakly holomorphic quasimodular forms $f\in\frac{1}{\Delta}\QMod$ we follow the same proof. The difference here is that the functions $f_r$ are allowed to have simple poles at $i\infty$. The slash operator $(-)||_k$ however may turn a simple pole into a pole of higher order. For $(-)||_k\gamma_b$ this order is bounded by $m$. As a consequence, $\bT_mf$ is weakly holomorphic quasimodular with pole of order at most~$m$.
\end{proof}

For our study of the multiple cover formula in Section~\ref{sec:mcf} we will require a more flexible notion, where the exponent is not necessarily related to the weight. The action of this operator will preserve the weight of weakly holomorphic quasimodular forms, it will, however, introduce poles and level structure.
\begin{definition}\label{def:wrongweighthecke}For $\ell\in\BZ$, we define
\[ \bT_{m,\ell} = \sum_{ad=m} a^{\ell-1} \bB_a\bU_d\, . \]
\end{definition}


The operator $\bT_{m,\ell}$ is simply the $m$-th Hecke operator of weight~$\ell$, which we let act on functions of weight~$k$. By M\"obius inversion we may rewrite each of them in terms of the other (see~\cite[Section 2.7]{Ap76}). For this, let $\mu$ be the M\"obius function.

\begin{lemma}\label{lem:hecke} The action of $\bT_{m,\ell}$ on weakly holomorphic quasimodular forms of weight $k$ is given by \[ \bT_{m,\ell}= \sum_{ad=m} c_{k,\ell}(a) \bB_a \bT_d\, ,\]
 where
 \[ c_{k,\ell}(a) = \sum_{r \mid a} r^{\ell-1} \mu\left(\frac{a}{r}\right)\left(\frac{a}{r}\right)^{k-1}\, . \]
 \end{lemma}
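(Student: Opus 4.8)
The plan is to reduce the operator identity to a purely arithmetic statement about Dirichlet convolutions, which is then resolved by Möbius inversion exactly as in the scalar theory of Hecke operators of a fixed weight. First I would record the only algebraic relation needed among the shift operators, namely $\bB_a\bB_{a'}=\bB_{aa'}$, which is immediate from the definitions. I also need that the operators $\{\bB_A\bU_D : AD=m\}$ are linearly independent as operators on Laurent series, so that an operator identity may be verified by matching the coefficient of each $\bB_A\bU_D$. To see this independence, apply a hypothetical vanishing combination $\sum_{AD=m}\lambda_A\,\bB_A\bU_D$ (with $D=m/A$) to the monomial $q^m$: since $\bB_A\bU_D\, q^m = q^{A^2}$ and the exponents $A^2$ are pairwise distinct for distinct divisors $A\mid m$, all $\lambda_A$ must vanish.

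Next, into the claimed right-hand side I would substitute the weight-$k$ expansion $\bT_d=\sum_{a'd'=d}(a')^{k-1}\bB_{a'}\bU_{d'}$ from \eqref{eq:Hecke}; the relevant weight is $k$ precisely because the operators act on weakly holomorphic quasimodular forms of weight $k$. Using $\bB_a\bB_{a'}=\bB_{aa'}$ and collecting terms according to $A:=aa'$ and $D:=d'$ (so that $AD=m$), the coefficient of $\bB_A\bU_D$ becomes $\sum_{a\mid A}c_{k,\ell}(a)(A/a)^{k-1}$. On the left-hand side the coefficient of $\bB_A\bU_D$ is simply $A^{\ell-1}$ by Definition~\ref{def:wrongweighthecke}. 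By the linear independence just established, the lemma is therefore equivalent to the scalar identity
\[ \sum_{a\mid A} c_{k,\ell}(a)\,(A/a)^{k-1} = A^{\ell-1} \quad\text{for all } A\in\BN\,. \]

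Finally, I would phrase this through Dirichlet convolution $*$. Writing $e(n)=n^{\ell-1}$ and $h(n)=n^{k-1}$, the displayed identity reads $c_{k,\ell}*h=e$. Since $h$ is completely multiplicative, its Dirichlet inverse is $\mu\cdot h$; concretely $\sum_{r\mid n}\mu(r)\,r^{k-1}(n/r)^{k-1}=n^{k-1}\sum_{r\mid n}\mu(r)$, which equals $1$ for $n=1$ and $0$ otherwise, so $(\mu\cdot h)*h=\epsilon$ with $\epsilon$ the convolution unit. Convolving $c_{k,\ell}*h=e$ with $\mu\cdot h$ then gives $c_{k,\ell}=e*(\mu\cdot h)$, which is exactly the stated formula $c_{k,\ell}(a)=\sum_{r\mid a}r^{\ell-1}\mu(a/r)(a/r)^{k-1}$; alternatively one simply verifies $c_{k,\ell}*h=e$ directly from this inversion.

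I expect the only genuine obstacle to be the bookkeeping in the operator-to-scalar reduction: cleanly establishing linear independence of the $\bB_A\bU_D$ so that coefficient-matching is legitimate, and ensuring the weight entering $\bT_d$ is consistently $k$ throughout. The number-theoretic core, that a completely multiplicative function $h$ has Dirichlet inverse $\mu\cdot h$ and hence admits Möbius inversion, is entirely routine.
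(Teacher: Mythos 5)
Your proof is correct and follows essentially the same route as the paper's: both reduce the operator identity to the Dirichlet-convolution fact that $\identity_{k-1}$ has Möbius inverse $\mu\cdot\identity_{k-1}$, combined with multiplicativity of $\bB$ and a regrouping of the divisor sums — the paper simply runs the computation from $\bT_{m,\ell}$ to $\sum_{ad=m} c_{k,\ell}(a)\,\bB_a\bT_d$ rather than in the reverse direction. Note that your linear-independence step is superfluous for the proof: once the coefficients of each $\bB_A\bU_D$ on the two sides agree, the operators are equal, so independence would only be needed for the (unused) converse implication that the operator identity forces the scalar identity.
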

 
 \begin{proof}
 The formula for $c_{k,\ell}$ above can be rewritten as \[c_{k,\ell}= \identity_{\ell-1} \star (\mu\cdot\identity_{k-1})\,,\]
 where $\identity_{\ell-1}(n) = n^{\ell-1}$ is the $(\ell -1)$-th power function and $\star$ denotes Dirichlet convolution, i.e.\ for functions $g,h$ we have
 \[ (g\star h) (m) = \sum_{ad=m}g(a)h(d) \,.\]
 Note also that $\bB$ is multiplicative with respect to composition, i.e.\ for $e\mid a$ we have $\bB_a=\bB_e \bB_{\frac{a}{e}}$ and therefore
  \begin{align*} \bT_{m,\ell} &= \sum_{ad=m} a^{\ell-1} \bB_a\bU_d \\
 &= \sum_{ad=m} \big(\identity_{\ell-1}\star (\mu\cdot \identity_{k-1})\star \identity_{k-1} \big)(a) \bB_a\bU_d \\
 &=\sum_{ad=m} \left( \sum_{e\mid a} c_{k,\ell}(e) \left(\frac{a}{e}\right)^{k-1} \right)\bB_a\bU_d  \\
 &=\sum_{uw=m} c_{k,\ell}(u) \bB_u \left(\sum_{v\mid w} v^{k-1} \bB_{v}\bU_{\frac{w}{v}}\right) \\
 &=\sum_{uw=m} c_{k,\ell}(u) \bB_u\bT_w\,.
 \end{align*}

\end{proof}
 
As a consequence we obtain the following result. Here, we let $\Mod(m)$ and $\QMod(m)$ be the space of modular resp.\ quasimodular forms for the congruence subgroup $\Gamma_0(m)\subset \SL_2(\BZ)$, see the introduction.
 
 \begin{proposition}\label{prop:level}
 Let $f\in\frac{1}{\Delta}\QMod$ be of weight~$k$, then $\bT_{m,\ell}f$ is a weakly holomorphic quasimodular of weight~$k$ with pole of order at most~$m$ for the congruence subgroup $\Gamma_0(m)\subset\SL_2(\BZ)$
 \[\bT_{m,\ell} f\in \frac{1}{\Delta^m}\QMod(m)\,.\]
 \end{proposition}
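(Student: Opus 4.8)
The plan is to reduce everything to the single Hecke operator $\bT_d$ of the correct weight via Lemma~\ref{lem:hecke}, and then to understand the level-raising effect of the scaling operator $\bB_a$. Concretely, Lemma~\ref{lem:hecke} rewrites
\[ \bT_{m,\ell}f = \sum_{ad=m} c_{k,\ell}(a)\,\bB_a\bT_d f\,, \]
so since $\frac{1}{\Delta^m}\QMod(m)$ is a $\BC$-vector space it suffices to treat each summand. By the lemma preceding Definition~\ref{def:wrongweighthecke}, $\bT_d f\in\frac{1}{\Delta^d}\QMod$ is a weakly holomorphic quasimodular form of weight~$k$ and level one, with pole of order at most~$d$. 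Thus the whole statement reduces to the claim that, for $ad=m$, the operator $\bB_a$ maps $\frac{1}{\Delta^d}\QMod$ into $\frac{1}{\Delta^m}\QMod(m)$ while preserving the weight.

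To prove this claim I would first record two elementary features of $\bB_a$. On $q$-expansions $\bB_a$ is the substitution $q\mapsto q^a$, i.e.\ $(\bB_a g)(\tau)=g(a\tau)$ at the level of functions on $\BH$; in particular $\bB_a$ is a ring homomorphism, $\bB_a(gh)=(\bB_a g)(\bB_a h)$, and it preserves the weight. Next I would show that $\bB_a$ raises the level from $\SL_2(\BZ)$ to $\Gamma_0(a)$. Using the structure theorem $\QMod=\BC[C_2,C_4,C_6]$ together with the ring-homomorphism property, it is enough to check this on the generators. For the modular generators $C_4,C_6$ this is the classical fact that $g(\tau)\mapsto g(a\tau)$ sends $\Mod$ into $\Mod(a)$: invariance under $\Gamma_0(a)$ is immediate from the identity $g(a\tau)=a^{-k/2}(g|_k\sigma_a)$ with $\sigma_a=\mathrm{diag}(a,1)$, and holomorphy at every cusp follows from the boundedness of $g|_k\alpha$ at $i\infty$ for all $\alpha\in\GL_2^+(\BQ)$. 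For $C_2$ one uses that $E_2(\tau)-aE_2(a\tau)$ is a holomorphic modular form of weight two for $\Gamma_0(a)$, whence $C_2(a\tau)\in\QMod(a)$. Therefore $\bB_a\QMod\subseteq\QMod(a)\subseteq\QMod(m)$, the last inclusion because $\Gamma_0(m)\subseteq\Gamma_0(a)$.

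It remains to track the denominator. Writing an element of $\frac{1}{\Delta^d}\QMod$ as $h/\Delta^d$ with $h\in\QMod$, the ring-homomorphism property gives
\[ \bB_a\!\left(\frac{h}{\Delta^d}\right)=\frac{\bB_a h}{(\bB_a\Delta)^d}=\frac{h(a\tau)}{\Delta(a\tau)^d}=\frac{1}{\Delta^m}\cdot h(a\tau)\cdot\Big(\frac{\Delta^a}{\Delta(a\tau)}\Big)^{d}\,, \]
where the last equality uses $ad=m$. Here $h(a\tau)\in\QMod(a)$ by the previous paragraph, and the key point is that $\Delta^a/\Delta(a\tau)=\eta(\tau)^{24a}/\eta(a\tau)^{24}$ is a \emph{holomorphic} modular form of weight $12(a-1)$ for $\Gamma_0(a)$: it is holomorphic and non-vanishing on $\BH$, it is $\Gamma_0(a)$-invariant because both $\Delta$ and $\bB_a\Delta\in\Mod(a)$ are, and it is holomorphic at every cusp of $\Gamma_0(a)$ by the Ligozat order formula for eta-quotients. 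Consequently $h(a\tau)\cdot(\Delta^a/\Delta(a\tau))^d\in\QMod(a)\subseteq\QMod(m)$, so $\bB_a(h/\Delta^d)\in\frac{1}{\Delta^m}\QMod(m)$, as desired, and summing over the divisors $ad=m$ completes the proof.

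The genuinely non-formal inputs are the two level-raising statements: that $\bB_a$ carries level-one (quasi)modular forms to level~$a$, and that the eta-quotient $\Delta^a/\Delta(a\tau)$ is holomorphic at all cusps of $\Gamma_0(a)$. The first is routine once reduced to the generators $C_2,C_4,C_6$; the second — i.e.\ controlling the pole of $\Delta(a\tau)^{-d}$ at the cusps other than $i\infty$ so that the quotient remains a genuine form for $\Gamma_0(m)$ — is the main obstacle, and is exactly where the eta-quotient computation enters. Everything else is bookkeeping of weights together with the ring-homomorphism property of $\bB_a$, which poses no difficulty.
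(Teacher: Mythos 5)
Your proof is correct and follows essentially the same route as the paper: both decompose $\bT_{m,\ell}$ via Lemma~\ref{lem:hecke}, use $\bT_d f\in\frac{1}{\Delta^d}\QMod$ from the preceding Hecke lemma, invoke the level-raising property $\bB_a\QMod\subseteq\QMod(a)$, and absorb the denominator through the form $\Delta(q)^a/\Delta(q^a)\in\Mod(a)$. The only difference is one of detail: you prove from scratch (via the generators $C_2,C_4,C_6$ and the Ligozat cusp computation for the eta quotient) the two facts that the paper simply cites from Koblitz or asserts.
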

 
 \begin{proof}
 We use the formula in Lemma~\ref{lem:hecke} and treat each summand separately. By Lemma~\ref{lem:weaklyholo} each $\bT_df$ satisfies
 \[ \bT_d f \in \frac{1}{\Delta^d}\QMod\,. \]
 The action of $\bB_a$ raises $q\mapsto q^a$, or equivalently $\tau\mapsto a\tau$, so it maps $\QMod$ to $\QMod(a)$, see~\cite[Ch.\ 3, Proposition 17]{Ko93}. Therefore
 \[ \bB_a\bT_d f \in \frac{1}{\Delta(q^a)^d} \QMod(a)\,.\]
 Finally, the weakly holomorphic modular form for $\Gamma_0(a)$ defined by
 \[ \frac{\Delta(q)^a}{\Delta(q^a)}\]
 is in fact holomorphic at $i\infty$, i.e.\ contained in $\Mod(a)$. Hence the same is true for its $d$-th power and we find 
 \[ \bB_a \bT_d f \in \frac{1}{\Delta^m}\QMod(a)\,. \]
 which concludes the proof since $\QMod(a)\subset\QMod(m)$.
 \end{proof}
 For later reference, we list the following basic commutator relations between the above operators acting on weakly holomorphic quasimodular forms~$f$ of weight~$k$. Recall, that the algebra $\QMod(m)$ is freely generated by the Eisenstein series $C_2$ over the algebra $\Mod(m)$ of modular forms. Formal differentiation with respect to $C_2$ is therefore well-defined.

\begin{lemma} \label{lem:operators}Let $d$, $e\in \BN$ and $\ell\in\BZ$, then
 	\begin{enumerate}[itemsep=6pt, label=(\roman*)]
 		\item $\bB_d \bB_e = \bB_{de} = \bB_e\bB_d$\,,
 		\item $\bU_d \bU_e = \bU_{de} = \bU_e\bU_d$\,, 
 		\item $\Dq \bB_d = d\, \bB_d \Dq$\,, \quad $\bU_d \Dq = d\, \Dq \bU_d$\,,
 		\item $\bT_{m,\ell+2} \Dq = m\, \Dq \bT_{m,\ell}$\,,
 		\item $\dc \bT_{m,\ell+2} = m \, \bT_{m,\ell} \dc$\,,
 		\item $[\dc,\Dq] = -2k$.
 	\end{enumerate}
 \end{lemma}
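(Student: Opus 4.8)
The plan is to treat the six relations in increasing order of difficulty: (i)--(iii) are formal identities on Laurent series, (iv) is a formal consequence of (iii), and the genuine content lies in (v) and (vi), which require the quasimodular structure.

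Relations (i)--(iii) are identities of operators on the space of Laurent series~\eqref{eq:Laurent} and need no modularity. For (i) and (ii) I would read off from the definitions that both $\bB_d\bB_e$ and $\bB_{de}$ send $q^n\mapsto q^{den}$, while both $\bU_d\bU_e$ and $\bU_{de}$ move the coefficient of $q^{den}$ into the $q^n$-slot; symmetry in $d,e$ is immediate. For (iii), since $\Dq$ multiplies the coefficient of $q^n$ by $n$, I would compare $\Dq\bB_d f = \sum_n (dn)a_n q^{dn}$ with $\bB_d\Dq f = \sum_n n a_n q^{dn}$ to obtain $\Dq\bB_d = d\,\bB_d\Dq$, and dually $\bU_d\Dq f = \sum_n (dn)a_{dn}q^n = d\,\Dq\bU_d f$.

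Relation (iv) I would deduce purely formally from (iii) and the definition $\bT_{m,\ell}=\sum_{ad=m}a^{\ell-1}\bB_a\bU_d$. Rewriting (iii) as $\bU_d\Dq=d\,\Dq\bU_d$ and $\bB_a\Dq=\tfrac1a\Dq\bB_a$, I compute
\[ \bT_{m,\ell+2}\Dq = \sum_{ad=m} a^{\ell+1}\bB_a\bU_d\Dq = \sum_{ad=m} a^{\ell+1}d\,\bB_a\Dq\bU_d = \sum_{ad=m} a^{\ell}d\,\Dq\bB_a\bU_d . \]
Since $a^{\ell}d = a^{\ell-1}(ad)=m\,a^{\ell-1}$, the right-hand side is exactly $m\,\Dq\bT_{m,\ell}$. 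For (v) the identical bookkeeping will work once I establish the commutation of $\dc$ with $\bB_a$ and $\bU_d$, and this is the crux. Since $\dc$ is only defined through the quasimodular structure, I would pass to almost holomorphic modular forms. By Remark~\ref{rem:qmod}, writing $t=\tfrac{1}{8\pi y}$ with $y=\mathrm{Im}(\tau)$ and completing $C_2$ to $\widehat{C_2}=C_2+t$, the completion $f\mapsto\widehat f$ is an algebra isomorphism under which $\dc$ corresponds to $\partial_t$. The operators $\bB_a$ and $\bU_d$ act on completions through the substitutions $\tau\mapsto a\tau$ and $\tau\mapsto\tfrac{\tau+b}{d}$; that the completion intertwines $\bB_a,\bU_d$ with these substitutions is precisely the compatibility of the completion with the slash operators $||_k$ for upper triangular matrices recalled from~\cite{Mo08} in the proof of Proposition~\ref{prop:level}. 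Under $\tau\mapsto a\tau$ the imaginary part scales by $a$, so $t\mapsto t/a$, giving $\dc\bB_a=\tfrac1a\,\bB_a\dc$; under $\tau\mapsto\tfrac{\tau+b}{d}$ it scales by $\tfrac1d$, so $t\mapsto dt$, giving $\dc\bU_d=d\,\bU_d\dc$. Feeding these into the computation for (iv) verbatim yields
\[ \dc\bT_{m,\ell+2} = \sum_{ad=m} a^{\ell+1}\tfrac{d}{a}\,\bB_a\bU_d\dc = \sum_{ad=m} a^{\ell}d\,\bB_a\bU_d\dc = m\,\bT_{m,\ell}\dc . \]
I expect the justification that the completion intertwines $\bB_a,\bU_d$ with the indicated $(\tau,t)$-substitutions to be the main obstacle, since it is the only step using more than the formal definitions.

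Finally, for (vi) I would argue structurally. Both $\dc$ and $\Dq$ are derivations of $\QMod$ (resp.\ of $\frac{1}{\Delta^m}\QMod$), so their commutator $[\dc,\Dq]$ is again a derivation; and multiplication by $-2k$ on the weight-$k$ component is also a derivation, because the weight grading is additive under products. Two derivations agreeing on a generating set agree everywhere, so it suffices to verify $[\dc,\Dq]C_{2i}=-2(2i)C_{2i}$ on the generators $C_2,C_4,C_6$ (and $[\dc,\Dq]\Delta^{-1}=24\,\Delta^{-1}$ in the weakly holomorphic case). Each reduces to a one-line check from Ramanujan's identities: for instance $\Dq C_2=-2C_2^2+(\text{weight }4\text{ modular})$ gives $\dc\Dq C_2=-4C_2$ while $\Dq\dc C_2=\Dq(1)=0$, so $[\dc,\Dq]C_2=-4C_2$; and $\Dq\Delta^{-1}=24\,C_2\,\Delta^{-1}$ gives $\dc\Dq\Delta^{-1}=24\,\Delta^{-1}$ while $\Dq\dc\Delta^{-1}=0$.
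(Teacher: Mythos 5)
Your proposal is correct and takes essentially the same route as the paper: (i)--(iv) are treated as formal identities of Laurent series, and (v) is proved by passing to the almost holomorphic completion, where $\dc$ becomes differentiation in $t=\frac{1}{8\pi y}$ and $\bB_a$, $\bU_d$ act by the substitutions $\tau\mapsto a\tau$, $\tau\mapsto\frac{\tau+b}{d}$ (hence $t\mapsto t/a$, $t\mapsto dt$), which is exactly the paper's argument via Remark~\ref{rem:qmod}. The only difference is cosmetic: for (vi) the paper cites \cite[Section 5.3]{Z08} as well known, whereas you give a self-contained argument (commutator of derivations equals the Euler/weight operator, checked on the generators $C_2, C_4, C_6, \Delta^{-1}$ via Ramanujan's identities), and your checks there are correct.
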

\begin{proof}
    The proof for~(i)-(iv) follows directly from the definition. For~(v) one may use that under the isomorphism $\widehat{f}\mapsto f$ the differentiation $\dc$ corresponds to differentiation with respect to $\frac{1}{8\pi y}$, see Remark~\ref{rem:qmod}. The statement~(v) is then checked as an identity of Laurent series in $q$ with polynomial coefficients in $y^{-1}$. The commutator relation~(vi) is well-known, see e.g.\ \cite[Section 5.3]{Z08}.

	
\end{proof}

\section{Multiple cover formula} \label{sec:mcf}
This section contains a discussion of the multiple cover formula. We start by recalling the conjecture formulated in~\cite{OP16}. Then, we study the conjecture for the descendent potentials associated to elliptic K3 surfaces. The result is expressed in terms of Hecke operators. The discussion naturally leads to a candidate for the holomorphic anomaly equation in higher divisibility. We conclude with a proof of the multiple cover formula in fiber direction.
\subsection{Multiple cover formula} Let $S$ be a nonsingular projective K3 surface, $\beta\in H_2(S,\BZ)$ be a \textit{primitive} effective curve class, $m\in \BN$ and $d\mid m$ be a divisor of $m$. The proposed formula by Oberdieck and Pandharipande involves a choice of a real isometry
\[\varphi_{d,m}\colon \Big(H^2(S,\BR)\, ,\langle\, , \,\rangle\Big) 
\to\Big(H^2(S_d, \BR)\, ,\langle\, , \,\rangle \Big) \] between two K3 surfaces such that 
\[ \varphi_{d,m}\left( \frac{m}{d} \beta \right) \in H_2(S_d,\BZ)\]
is a primitive effective curve class\footnote{We view curve classes also as cohomology classes under the natural isomorphism  $H_2(S,\BZ)\cong H^2(S,\BZ)$.}. In \cite{B19} the second author proved that such an isometry can always be found and Gromov--Witten invariants are in fact independent of the choice of isometry.

Consider integers $a_i\in\BN$, cohomology classes $\gamma_i\in H^*(S,\BQ)$ and let $\deg = \sum \deg(\gamma_i)$. Then, the conjectured multiple cover formula~\cite[Conjecture C2]{OP16}, identical to Conjecture~\ref{conj:georgrahul} in Section~\ref{sec:introduction}, is
\begin{align*} &\big\langle \tau_{a_1}(\gamma_1)\ldots\tau_{a_n}(\gamma_n)\big\rangle_{g,m\beta} \\
&= \sum_{d\mid m} d^{2g-3+\deg} \big\langle \tau_{a_1}(\varphi_{d,m}(\gamma_1))\ldots\tau_{a_n}(\varphi_{d,m}(\gamma_n))\big\rangle_{g,\varphi_{d,m}\left(\frac{m}{d}\beta\right)}\,. \end{align*}

Let $S$ be an elliptic K3 surface with a section\footnote{Notations here are as in Section~\ref{sec:introduction}. In particular, we use the modified degree function $\degm$.}. The full (reduced) Gromov--Witten theory of K3 surfaces is captured by $S$ with curve class $mB+hF$ via standard deformation arguments using the Torelli theorem. In fact, the multiple cover conjecture can be captured entirely via $S$ as well: we may choose \emph{the same $S_d=S$} for any $d$ dividing $m$ and $h$. For $l\in\BQ^*$ we define 
\[\phi_{l}\colon H^*(S,\BQ) \to H^*(S,\BQ)\]
acting on $U=\BQ\langle F, W\rangle$  as
\[ \phi_{l}(F) = \frac{1}{l} F\, , \qquad \phi_{l}(W) = l W\,,\]
and trivially on the orthogonal complement $U^{\perp}$. For $d\mid m$ and $d\mid h$ we may choose $\varphi_{d,m}$ as $\phi_{\frac{d}{m}}$:
\[ \phi_{\frac{d}{m}} \left( \frac{m}{d}B + \frac{h}{d}F\right) = B+\left(\frac{m(h-m)}{d^2}+1\right) F \; \textup{ in } H_2(S,\BZ)\]
which is a primitive curve class. 

Altering the curve class via the isometry $\phi$ therefore results in additional factors of $\frac{d}{m}$ or $\frac{m}{d}$ while keeping the descendent insertions unchanged. This explains the change in exponents
\[ 2g-3 + \deg \longleftrightarrow 2g-3+\degm\] and the factor $m^{\deg - \degm}$ in the multiple cover formula below for the descendent potential. We use the operator~$\bT_{m,\ell}$ introduced in Definition~\ref{def:wrongweighthecke}. As pointed out in Section~\ref{sec:hecke}, this is the $m$-th Hecke operator for functions of weight~$\ell$, which we let act on $\bF_{g,1}$ (which has weight~$2g-12+\degm$). Before stating the conjecture, we want to discuss the role of tautological classes and compatibility with respect to restriction to boundary strata.

\subsection{Compatibility I}\label{sec:compatibilityI}
We will find it convenient to use pullbacks of tautological classes from $\mgn$ instead of $\psi$-classes on $\mgn(S,\beta)$. For $2g-2+n>0$, let 
\[R^*(\mgn)\subseteq A^{*}(\mgn)\] be the tautological ring of $\mgn$.  For a tautological class $\alpha\in R^*(\mgn)$, we consider the invariants
\[ \lan \alpha; \gamma_1,\ldots, \gamma_n \ran =\int_{[\mgn(S,\beta)]^{red}} \pi^*\alpha\cup \prod_{i=1}^n\ev_i^*(\gamma_i)\,, \]
where $\pi\colon \mgn(S,\beta)\to \mgn$ is the stabilization morphism. We write \[\bF_{g,m}\big(\alpha; \gamma_1,\ldots,\gamma_n\big) = \sum_{h\geq 0}\lan \alpha; \gamma_1,\ldots, \gamma_n \ran_{g,mB+hF}\, q^{h-m} \]
for the generating series in divisibility~$m$. By the usual trading of cotangent line classes, these generating series are related to the ones defined via cotangent classes on~$\mgn(S,\beta)$. Any monomial in $\psi$- and $\kappa$-classes can be written, after adding markings, as a product of $\psi$-classes. This procedure leaves $\deg$ and $\degm$ unchanged. Before stating the multiple cover formula below, we explain the compatibility with respect to restriction to boundary strata in $\mgn(S,\beta)$.

A crucial point for this compatibility is the splitting behavior of the reduced class. Consider the pullback of the boundary divisor
\[\Mbar_{g-1,n+2} \to \Mbar_{g,n} \]
under the stabilization morphism $\pi$. Let $\alpha$ be the pushforward of a tautological class (we will omit pushforwards in the notation below). By the restriction property of the reduced class, we obtain
\[ \bF_{g,m}\big(\alpha; \gamma\big) = \bF_{g-1,m}\big(\alpha;\gamma \Delta_S\big)\,.\]
Then, the compatibility follows from two facts. Firstly, for the diagonal class $\Delta_S$ we have
\[ \big(\deg - \degm \big)(\Delta_S) = 0\,,\]
thus the factor $m^{\deg-\degm}$ in Conjecture~\ref{conj:mcf} below remains unchanged. Secondly, we have $\degm(\Delta_S) = 2$ which precisely offsets the genus reduction from $g$ to $g-1$ in the formula
\[ \ell = 2g-2+\degm\,. \]

Next, consider the pullback of the boundary divisor
\[\Mbar_{g_1,n_1+1} \times \Mbar_{g_2,n_2+1} \to \Mbar_{g,n} \]
under the stabilization morphism $\pi$. Let 
\[\alpha = \alpha_1\boxtimes \alpha_2\,,\quad \{1,\ldots,n\}=I_1 \cup I_2\,,\quad \gamma=\gamma_1\boxtimes\gamma_2\] 
be the pushforward of the product of tautological classes, the splitting of markings, and the splitting of the insertions respectively. The Künneth decomposition of the class of the diagonal is denoted by
\[ [\Delta_S] = \sum_j \Delta_j \boxtimes \Delta^j\,.\] The splitting property implies that
\begin{align*} \bF_{g,m}\big(\alpha;\gamma\big) = \sum_{m_1+m_2=m}\sum_{j} \Bigg(&\bF_{g_1,m_1}\big(\alpha_1;\gamma_{I_1}\Delta_j\big) \cdot \bF_{g_2,m_2}^{vir}\big(\alpha_1;\gamma_{I_1}\Delta^j\big) \\
&+\bF_{g_1,m_1}^{vir}\big(\alpha_1;\gamma_{I_1}\Delta_j\big) \cdot \bF_{g_2,m_2}\big(\alpha_1;\gamma_{I_1}\Delta^j\big)\Bigg)\,.\end{align*}
The virtual class for non-zero curve classes vanishes, thus the contribution $\bF^{vir}$ is a number. As a consequence, no non-trivial products of generating series appear when we use boundary expressions. By similar consideration as above, using the $\deg$ and $\degm$ for the diagonal class, we find that the multiple cover formula is compatible with respect to this boundary divisor as well. We can now state the multiple cover formula for the generating series with tautological classes:
\begin{conjecture}\label{conj:mcf}
For $\degm$-homogeneous $\gamma_i\in H^*(S,\BQ)$,  \vspace{2mm}
	\begin{equation*}\vspace{2mm} \bF_{g,m}\big(\alpha;\gamma_1,\ldots,\gamma_n\big) = m^{\deg-\degm}\,  \bT_{m,\ell} \,\Big( \bF_{g,1}\big(\alpha;\gamma_1,\ldots,\gamma_n\big)\Big)\, , \end{equation*} 
where $\deg = \sum \deg(\gamma_i)$, $\degm = \sum \degm(\gamma_i)$ and $\ell = 2g-2+\degm$.
\end{conjecture}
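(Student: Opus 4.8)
The statement is a reformulation rather than an independent assertion, so the plan is to derive it from the Oberdieck--Pandharipande multiple cover conjecture (Conjecture~\ref{conj:georgrahul}): I would apply that conjecture term by term to the invariants assembling into $\bF_{g,m}$ and then recognise the resulting arithmetic sum as the Hecke operator $\bT_{m,\ell}$. Throughout I work on the fixed elliptic K3 surface $S$: by deformation invariance of the reduced class together with the Torelli theorem, and the independence of the invariants from the choice of isometry established in~\cite{B19}, it is legitimate to take $S_d = S$ for every $d$ and to realise the required isometries concretely as the maps $\phi_{d/m}$. Because $\phi_{d/m}$ leaves the tautological class $\alpha$ untouched and only rescales the cohomology insertions, the compatibility of Section~\ref{sec:compatibilityI} lets me pass freely between the descendent form of Conjecture~\ref{conj:georgrahul} and the tautological form in the statement.

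First I would fix $h\geq 0$ and apply Conjecture~\ref{conj:georgrahul} to $\big\langle\alpha;\gamma_1,\ldots,\gamma_n\big\rangle_{g,mB+hF}$. The class $mB+hF$ has divisibility $e=\gcd(m,h)$, so the conjecture rewrites this invariant as a sum over $d\mid e$ of primitive invariants with curve class
\[ \phi_{d/m}\Big(\tfrac{1}{d}(mB+hF)\Big) = B + \Big(1+\tfrac{m(h-m)}{d^2}\Big)F \]
and insertions $\phi_{d/m}(\gamma_i)$. Checking the generators $F$, $W$, $U^\perp$, $\one$, $\pt$ one by one shows that for $\degm$-homogeneous $\gamma_i$ one has $\phi_{d/m}(\gamma_i)=(m/d)^{\deg(\gamma_i)-\degm(\gamma_i)}\gamma_i$, so that the total insertion factor is $(m/d)^{\deg-\degm}$. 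Combining this with the weight $d^{2g-3+\deg}$ of the conjecture collapses the $d$-dependence to
\[ d^{2g-3+\deg}\,(m/d)^{\deg-\degm} = m^{\deg-\degm}\,d^{\,\ell-1},\qquad \ell=2g-2+\degm, \]
so that after factoring out $m^{\deg-\degm}$ we are left with a $d$-weighted double sum over $h$ and $d\mid\gcd(m,h)$.

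It then remains to identify
\[ \sum_{h\geq 0}\ \sum_{d\mid\gcd(m,h)} d^{\,\ell-1}\,\big\langle\alpha;\gamma_\bullet\big\rangle_{g,\,B+(1+\frac{m(h-m)}{d^2})F}\,q^{h-m} \]
with $\bT_{m,\ell}\bF_{g,1}$. Here I would interchange the summation order, writing the outer sum over $d\mid m$ and the inner sum over those $h$ with $d\mid h$; the substitution $n=(h-m)/d$ then turns the curve class into $B+(\tfrac{m}{d}n+1)F$ and the exponent into $dn$, which is exactly the effect of $\bB_d\bU_{m/d}$ on the Laurent series $\bF_{g,1}=\sum_n\big\langle\alpha;\gamma_\bullet\big\rangle_{g,B+(n+1)F}\,q^n$. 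Collecting the weights $d^{\ell-1}$ over $d\mid m$ reproduces $\bT_{m,\ell}=\sum_{ad=m}a^{\ell-1}\bB_a\bU_d$ verbatim, which yields the claimed identity $\bF_{g,m}=m^{\deg-\degm}\,\bT_{m,\ell}\bF_{g,1}$.

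I expect the main difficulty to lie not in any single algebraic step but in the bookkeeping that makes the reindexing legitimate. One must use that $\bF_{g,1}$ has pole of order at most one, so that $\big\langle\alpha;\gamma_\bullet\big\rangle_{g,B+h'F}$ vanishes for $h'<0$, in order to control the range of summation and keep $\bB_a$, $\bU_d$ well-defined; one must check that the varying divisibility $\gcd(m,h)$ on the geometric side is precisely absorbed by the operators $\bU_d$; and one must verify that the differing $q$-normalisations, $q^{h-m}$ on the left against $q^{h'-1}$ in $\bF_{g,1}$, are matched correctly under $\bB_a$ and $\bU_d$. The conceptual heart is that the degree twist $m^{\deg-\degm}$ and the ``wrong weight'' $\ell=2g-2+\degm$ are exactly what reconciles the weight $2g-12+\degm$ of $\bF_{g,1}$ with the elliptic weight $2g-2+\degm$ of the Hecke operator, as anticipated in Section~\ref{sec:hecke}.
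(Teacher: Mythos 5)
Your proposal is correct and follows essentially the same route as the paper: the paper's Lemma~\ref{lem:equiv} derives Conjecture~\ref{conj:mcf} from Conjecture~\ref{conj:georgrahul} by exactly your chain of steps --- taking $S_d=S$ with the isometries $\phi_{d/m}$, extracting the insertion factor $(m/d)^{\deg-\degm}$, collapsing $d^{2g-3+\deg}(m/d)^{\deg-\degm}=m^{\deg-\degm}d^{\ell-1}$, and reindexing the double sum over $h$ and $d\mid\gcd(m,h)$ to recognize $\sum_{d\mid m}d^{\ell-1}\bB_d\bU_{m/d}=\bT_{m,\ell}$ acting on $\bF_{g,1}$. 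Your bookkeeping remarks (vanishing of invariants for non-effective classes $B+h'F$, $h'<0$, and the compatibility of Section~\ref{sec:compatibilityI} for tautological insertions) match the considerations the paper relies on.
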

Based on the discussion above, the same formula is conjectured for the potential \[\bF_{g,m}\big(\tau_{a_1}(\gamma_1)\ldots\tau_{a_n}(\gamma_n)\big)\,.\] 

We now show that our presentation of the multiple cover formula is equivalent to the original formula.

\begin{lemma}\label{lem:equiv}
Conjecture~\ref{conj:georgrahul} for all~$d\mid m$ is equivalent to Conjecture~\ref{conj:mcf} for~$m$.
\end{lemma}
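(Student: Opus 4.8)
The plan is to prove the equivalence coefficient by coefficient in the variable $q$, matching the divisor sum in Conjecture~\ref{conj:georgrahul} with the divisor sum that defines $\bT_{m,\ell}$ in Definition~\ref{def:wrongweighthecke}. Throughout I write $N_h = \langle\gamma_1,\ldots,\gamma_n\rangle_{g,mB+hF}$ for the fixed insertions, so that $\bF_{g,m}=\sum_{h\ge 0}N_h\,q^{h-m}$, and similarly $\bF_{g,1}=\sum_{j\ge 0}N^{(1)}_j\,q^{j-1}$ for the primitive potential. First I would specialize Conjecture~\ref{conj:georgrahul} to the elliptic K3 surface $S$: by deformation invariance the invariants depend only on the norm and divisibility, and by \cite{B19} they are independent of the chosen isometry, so it suffices to work with the classes $mB+hF$ and a convenient isometry. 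Writing $mB+hF=k\beta$ with $\beta$ primitive and $k=\gcd(m,h)$, I apply the conjecture to $k\beta$ with the choice $\varphi_{d,k}=\phi_{d/m}$ for each $d\mid k$. A direct computation gives
\[ \phi_{d/m}\Big(\tfrac1d(mB+hF)\Big)=B+\Big(\tfrac{m(h-m)}{d^2}+1\Big)F\,, \]
which is integral, since $d\mid k$ forces $d^2\mid m(h-m)$, and primitive.

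The heart of the argument is the insertion bookkeeping. Because the $\gamma_i$ are $\degm$-homogeneous, they are eigenvectors of $\phi_{d/m}$: one has $\phi_{d/m}(\gamma_i)=c_i\gamma_i$ with $c_i=m/d$ for $\gamma_i\in\BQ\langle F\rangle$, $c_i=d/m$ for $\gamma_i\in\BQ\langle W\rangle$, and $c_i=1$ otherwise. By multilinearity of Gromov--Witten invariants the term indexed by $d$ becomes $\big(\prod_i c_i\big)\,N^{(1)}_{\frac{m(h-m)}{d^2}+1}$. Recalling that $\deg$ is the complex codimension, a case check on the five homogeneous types $\one,F,W,U^{\perp},\pt$ yields the key identity $\deg-\degm=\#\{i:\gamma_i\in\BQ\langle F\rangle\}-\#\{i:\gamma_i\in\BQ\langle W\rangle\}$, whence $\prod_i c_i=(m/d)^{\deg-\degm}$. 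Combining this with $d^{\,2g-3+\deg}(m/d)^{\deg-\degm}=m^{\deg-\degm}\,d^{\,2g-3+\degm}$ and $\ell=2g-2+\degm$, Conjecture~\ref{conj:georgrahul} for the class $mB+hF$ takes the form
\[ N_h=m^{\deg-\degm}\sum_{d\mid\gcd(m,h)} d^{\,\ell-1}\,N^{(1)}_{\frac{m(h-m)}{d^2}+1}\,. \]

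Finally I would unwind the Hecke operator. Using $\bT_{m,\ell}=\sum_{ad=m}a^{\ell-1}\bB_a\bU_d$, the $q^{h-m}$-coefficient of $\bB_a\bU_d\bF_{g,1}$ equals $N^{(1)}_{\frac{m(h-m)}{a^2}+1}$ whenever $a\mid(h-m)$ and vanishes otherwise; since $a\mid m$, the condition $a\mid(h-m)$ is equivalent to $a\mid\gcd(m,h)$. Hence the $q^{h-m}$-coefficient of $m^{\deg-\degm}\bT_{m,\ell}\bF_{g,1}$ is exactly the right-hand side displayed above. As this matching holds for every $h\ge 0$, the generating-series identity of Conjecture~\ref{conj:mcf} is equivalent to the whole collection of numerical identities for the classes $mB+hF$, that is, to Conjecture~\ref{conj:georgrahul} in every divisibility $d\mid m$; since each step in the chain is an equivalence, the reverse implication is automatic. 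The descendent version is identical, as $\phi_{d/m}$ acts only on the target insertions and leaves the $\psi$- and $\kappa$-classes untouched.

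I expect the main obstacle to be the degree and scaling bookkeeping in the middle step --- pinning down the eigenvalues $c_i$ and verifying $\deg-\degm=\#F-\#W$ with the correct convention that $\deg$ is a complex codimension --- together with checking that the divisor ranges agree, so that the support condition $d\mid\gcd(m,h)$ produced by $\bT_{m,\ell}$ coincides with the range of summation coming from the divisibility $\gcd(m,h)$ of the class $mB+hF$.
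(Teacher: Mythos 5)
Your core computation is correct and is essentially the paper's own proof: you specialize to the elliptic K3 surface, choose the isometry $\phi_{d/m}$, establish the identity $\deg-\degm=\#\{i \mid \gamma_i = F\}-\#\{i\mid \gamma_i = W\}$, combine the exponents via $d^{2g-3+\deg}(m/d)^{\deg-\degm}=m^{\deg-\degm}d^{\ell-1}$, and match the resulting divisor sum with $\bT_{m,\ell}=\sum_{ad=m}a^{\ell-1}\bB_a\bU_d$. The only cosmetic difference is that you match coefficients of $q^{h-m}$ one at a time, while the paper carries out the identical manipulation at the level of generating series.

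The one step that does not hold as stated is your closing claim that the collection of numerical identities for the classes $mB+hF$ \emph{is} Conjecture~\ref{conj:georgrahul} in every divisibility $d\mid m$, so that ``the reverse implication is automatic.'' Each such identity is indeed an instance of Conjecture~\ref{conj:georgrahul}, but the converse containment fails for proper divisors: if $\gcd(m,h)=k<m$ and $mB+hF=k\beta_0$ with $\beta_0$ primitive, then $\langle\beta_0,\beta_0\rangle = 2m(h-m)/k^2$, and as $h$ varies these norms sweep out only a congruence subfamily of all norms $2n-2$. Concretely, for $m=4$ and $k=2$ one only reaches classes $2\beta_0$ with $\langle\beta_0,\beta_0\rangle=2n-2$ for $n$ odd, so Conjecture~\ref{conj:mcf} for $m=4$ says nothing about divisibility-two classes with $n$ even. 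Hence Conjecture~\ref{conj:mcf} for $m$ recovers Conjecture~\ref{conj:georgrahul} in divisibility exactly $m$ --- this is how the paper argues that direction, by extracting the coefficients of $q^{mh-m}$, since the classes $m(B+hF)$ realize all norms --- but it does not by itself yield the conjecture in every proper divisibility $d\mid m$. (The paper's own treatment of this direction is no stronger, but it also does not assert the identification you make explicit.) The substantive direction, the one used later in the paper --- Conjecture~\ref{conj:georgrahul} for all $d\mid m$ implies Conjecture~\ref{conj:mcf} for $m$ --- is established by your computation, correctly and in the same way as in the paper.
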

\begin{proof}
    By the deformation invariance of the reduced class, the Gromov--Witten invariants for arbitrary curve classes are fully captured by an elliptic K3 surface with a section. The primitive curve classes are $B+hF\in H_2(S,\BZ)$. Taking the coefficient of $q^{mh-m}$ in Conjecture~\ref{conj:mcf} gives a multiple cover formula for the curve class $mB + mhF$ which matches the formula in Conjecture~\ref{conj:georgrahul}. It is the other implication which we have to justify.
    
    The generating series $\bF_{g,m}$ involves curve classes $mB+hF$ of different divisibilities bounded by $m$. We apply Conjecture~\ref{conj:georgrahul} to each invariant and use the isometries $\phi$. Note that each appearance of $\gamma_i = F$ introduces a factor of $\frac{m}{d}$, while each appearance of $\gamma_i = W$ gives $\frac{d}{m}$. Moreover, 
    \[ |\{i \mid \gamma_i = F\}| - |\{i \mid \gamma_i = W\}| = \deg - \degm\,,\]
    and therefore
    \begin{align*} &\bF_{g,m}\big(\alpha;\gamma_1,\ldots,\gamma_n\big) = \sum_{h\geq 0} \big\langle\alpha;\gamma_1,\ldots,\gamma_n\big\rangle_{g,mB+hF}\, q^{h-m} \\
    &= \sum_{h\geq 0}\sum_{\substack{d\mid m\\ d\mid h}}d^{2g-3+\deg} \left(\frac{m}{d}\right)^{\deg -\degm} \big\langle\alpha;\gamma_1,\ldots,\gamma_n\big\rangle_{g,B+\left(\frac{m(h-m)}{d^2}+1\right) F}\, q^{h-m} \\
    &= m^{\deg - \degm} \sum_{d\mid m} d^{2g-3+\degm} \left(\sum_{h\geq 0}\big\langle\alpha;\gamma_1,\ldots,\gamma_n\big\rangle_{g,B+\left(\frac{m}{d}(h-\frac{m}{d})+1\right) F}\, \left(q^d\right)^{h-\frac{m}{d}}\right) \\
    &= m^{\deg - \degm} \sum_{d\mid m} d^{2g-3+\degm} \left(\bB_d \bU_{\frac{m}{d}}\sum_{h\geq 0}\big\langle\alpha;\gamma_1,\ldots,\gamma_n\big\rangle_{g,B+hF}\, q^{h-1} \right)\\
    &= m^{\deg - \degm} \sum_{d\mid m} d^{2g-3+\degm} \, \bB_d \bU_{\frac{m}{d}}\bF_{g,1}\big(\alpha;\gamma_1,\ldots,\gamma_n\big)\\
    &=m^{\deg-\degm}\, \bT_{m,\ell} \,\Big( \bF_{g,1}\big(\alpha;\gamma_1,\ldots,\gamma_n\big)\Big)\,.\qedhere
    \end{align*}
\end{proof}
As a direct consequence, the multiple cover formula implies level~$m$ quasimodularity.
\begin{proposition}\label{prop:mcfimpliesqmod}
If the generating series $\bF_{g,m}$ satisfies the multiple cover formula, it satisfies the quasimodularity conjecture. More precisely,
\[ \bF_{g,m} \in \frac{1}{\Delta(q)^m} \QMod(m)\,.\]
\end{proposition}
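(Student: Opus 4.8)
The plan is to read the assertion as a formal corollary of two facts already in place: the primitive quasimodularity of Maulik--Pandharipande--Thomas, and the mapping property of the wrong-weight Hecke operator $\bT_{m,\ell}$ proved in Proposition~\ref{prop:level}. Indeed, the multiple cover formula (Conjecture~\ref{conj:mcf}) exhibits $\bF_{g,m}$ as a scalar multiple of $\bT_{m,\ell}$ applied to the primitive potential $\bF_{g,1}$, and the membership in $\frac{1}{\Delta^m}\QMod(m)$ is then read off directly from Proposition~\ref{prop:level}.

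\textbf{Input from the primitive case.} First I would reduce, as throughout the paper, to the elliptic K3 surface $S$ with a section and the primitive curve classes $B+hF$. By \cite[Theorem~4]{MPT10}, for $\degm$-homogeneous insertions the primitive potential is a weakly holomorphic quasimodular form for $\SL_2(\BZ)$ of weight $2g-12+\degm$ with pole at $q=0$ of order at most one; that is,
\[ \bF_{g,1}\big(\alpha;\gamma_1,\ldots,\gamma_n\big)\in \frac{1}{\Delta(q)}\QMod\,. \]

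\textbf{Applying the operator.} Next I would invoke the hypothesis. For $\degm$-homogeneous insertions, Conjecture~\ref{conj:mcf} gives
\[ \bF_{g,m}\big(\alpha;\gamma_1,\ldots,\gamma_n\big) = m^{\deg-\degm}\,\bT_{m,\ell}\Big(\bF_{g,1}\big(\alpha;\gamma_1,\ldots,\gamma_n\big)\Big)\,,\qquad \ell=2g-2+\degm\,. \]
Since $\bF_{g,1}\in\frac{1}{\Delta}\QMod$ is of weight $k=2g-12+\degm$, Proposition~\ref{prop:level} applies with this $k$ and $\ell$ and yields
\[ \bT_{m,\ell}\Big(\bF_{g,1}\big(\alpha;\gamma_1,\ldots,\gamma_n\big)\Big)\in \frac{1}{\Delta^m}\QMod(m)\,, \]
a weakly holomorphic quasimodular form for $\Gamma_0(m)$ of the same weight $k$ and pole of order at most $m$. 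Multiplication by the scalar $m^{\deg-\degm}$ preserves this space, so $\bF_{g,m}\in\frac{1}{\Delta^m}\QMod(m)$. Finally, a general (not necessarily $\degm$-homogeneous) insertion is handled by linearity: decomposing each $\gamma_i$ into its $\degm$-homogeneous components expresses $\bF_{g,m}$ as a finite sum of terms of the above type, each lying in $\frac{1}{\Delta^m}\QMod(m)$, and this space is closed under addition.

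\textbf{Main obstacle.} There is essentially no obstacle left at this stage: all of the genuine content has been packaged into Proposition~\ref{prop:level}, whose proof establishes the nontrivial point that the wrong-weight Hecke operator sends weakly holomorphic quasimodular forms to weakly holomorphic quasimodular forms while introducing precisely the level $\Gamma_0(m)$ and a pole of order at most $m$. Given that input together with \cite[Theorem~4]{MPT10}, the proposition is a formal deduction. The only point requiring care is bookkeeping of the weight, namely that $\bT_{m,\ell}$ preserves the weight $2g-12+\degm$ of $\bF_{g,1}$ regardless of the auxiliary exponent $\ell=2g-2+\degm$, which is exactly the statement of Proposition~\ref{prop:level}.
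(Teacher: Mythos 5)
Your proposal is correct and takes essentially the same approach as the paper: quasimodularity of the primitive potential combined with Proposition~\ref{prop:level} applied to the Hecke-operator form of the multiple cover formula (Conjecture~\ref{conj:mcf}). The only cosmetic difference is that the paper cites \cite[Theorem 9]{BOPY18} alongside \cite[Theorem 4]{MPT10} for the primitive descendent potentials with the modified degree function $\degm$.
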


\begin{proof}The descendent potentials for primitive curve classes are weakly holomorphic quasimodular with pole of order at most~$1$ and weight~$2g-12+\degm$, see~\cite[Theorem 4]{MPT10} and~\cite[Theorem 9]{BOPY18}. The claim thus follows from Proposition~\ref{prop:level}.
\end{proof}

\subsection{Multiple cover formula in fiber direction}\label{sec:FiberDirection}
When the curve class is a multiple of the fiber class $F$, the multiple cover formula reduces to a property of the Gromov--Witten invariant of elliptic curves. Relevant properties are conjectured in \cite{PixtonBachelor}. 

Let $S\to \BP^1$ be an elliptic K3 surface with section and let $\beta=mF$. By Section~\ref{sec:ProofQmodHAE}, Case 1, we may assume at least one of the insertions is the point class $\gamma_1=\pt$ and $g\geq 1$. Let 
\[ \iota\colon E \hookrightarrow S\]
be the inclusion of a fiber, representing the class $F$. Since the point class is represented by a transverse intersection of $E$ and the section~$B$, the Gromov--Witten theory of $S$ localizes to the Gromov--Witten theory of $E$ with the curve class $mE$. Computation of the obstruction bundle shows that the invariant is of the form
\[\lan \tau_{a_1}(\pt)\tau_{a_2}(\gamma_2)\ldots\tau_{a_n}(\gamma_n)\ran^S_{g,mF}=\lan \lambda_{g-1};\tau_{a_1}(\omega)\tau_{a_2}(\iota^*\gamma_2)\ldots\tau_{a_n}(\iota^*\gamma_n)\ran^E_{g,mE}\]
where $\lambda_{g-1}=c_{g-1}(\BE_g)$. In particular, if $\gamma_i\in \BQ\lan F \ran \oplus U^\perp\oplus \BQ\lan \pt\ran$, the invariant vanishes.
Consider the following generating series \[\bF^E_g\big(\tau_{a_1}(\gamma_1)\ldots\tau_{a_n}(\gamma_n)\big)=\sum_{m\geq 0}\lan \lambda_{g-1}; \tau_{a_1}(\gamma_1)\ldots\tau_{a_n}(\gamma_n)\ran^E_{g,mE}\,q^{m}\] where $\gamma_i=\one$ or $\omega$ and $\sum a_i + \sum \deg(\gamma_i) = g-1+n$.

The generating series $\bF^E_g$ has a simple description in terms of Eisenstein series. The following formula is conjectured in \cite{PixtonBachelor}.
\begin{lemma}\label{lem:EllipticCurve}
For $g\geq 1$, \[\bF^E_g\big(\tau_{g-1}(\omega)\big)= \frac{g!}{2^{g-1}}C_{2g}\,.\]
\end{lemma}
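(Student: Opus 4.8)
The series $\bF^E_g(\tau_{g-1}(\omega))$ is the one-point stationary descendent series of the elliptic curve $E$, twisted by the Hodge class $\lambda_{g-1}=c_{g-1}(\BE_g)$. The plan is to identify it as a quasimodular form of weight $2g$ and then determine it completely from its Fourier coefficients. First I would record the weight: a stationary insertion $\tau_k(\omega)$ contributes weight $k+2$ and $\lambda_{g-1}$ contributes weight $g-1$, so $\tau_{g-1}(\omega)$ together with $\lambda_{g-1}$ carries weight $(g+1)+(g-1)=2g$. By the quasimodularity of the stationary Gromov--Witten theory of $E$ (Okounkov--Pandharipande), together with the fact that the Hodge twist preserves quasimodularity, $\bF^E_g(\tau_{g-1}(\omega))$ is a quasimodular form of weight $2g$, holomorphic at the cusp since the series begins in degree $q^0$.

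The heart of the argument is the explicit evaluation of the coefficients $\langle\lambda_{g-1};\tau_{g-1}(\omega)\rangle^E_{g,mE}$ for $m\geq 1$. I would compute these through the Gromov--Witten/Hurwitz correspondence: the stationary insertion $\tau_{g-1}(\omega)$ corresponds to the completed cycle $\overline{(g)}$, and the degree-$m$ invariant becomes a trace on the infinite wedge, i.e.\ a sum over partitions of $m$ of central-character values. The class $\lambda_{g-1}$ enters via the Hodge-integral version of this correspondence; its effect is to select the principal term of the completed cycle, whose trace over degree-$m$ covers of $E$ with one point fixed evaluates to a multiple of $\sigma_{2g-1}(m)$. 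Establishing that the coefficient is \emph{exactly} proportional to $\sigma_{2g-1}(m)$ simultaneously shows that $\bF^E_g(\tau_{g-1}(\omega))$ carries no cusp-form contribution: it must be a scalar multiple of the Eisenstein series $E_{2g}$, hence of $C_{2g}=-\tfrac{B_{2g}}{2g(2g)!}E_{2g}$.

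It then remains to fix the scalar by matching a single coefficient. The cleanest choice is the constant term $m=0$: degree-zero stable maps give $\overline{M}_{g,1}(E,0)\cong\overline{M}_{g,1}\times E$, the obstruction contribution inserts $(-1)^g\lambda_g$ via the Euler class of $H^1(C,f^*T_E)=\BE^\vee$, and integrating $\omega$ over the $E$-factor leaves $(-1)^g\int_{\overline{M}_{g,1}}\lambda_g\lambda_{g-1}\psi_1^{g-1}$, a top intersection on $\overline{M}_{g,1}$ of the expected dimension $3g-2$. This is the Faber $\lambda_g\lambda_{g-1}$ evaluation, which I expect to produce the constant term $\tfrac{g!}{2^{g-1}}\bigl(-\tfrac{B_{2g}}{2g(2g)!}\bigr)$. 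Alternatively one can match the $q^1$ coefficient, where degree-one covers are isomorphisms and the invariant again reduces to a pure Hodge integral. As a sanity check, for $g=1$ one has $\lambda_0=1$ and the count of degree-$m$ isogenies of $E$ with one marked point equals $\sigma_1(m)$, reproducing $C_2$.

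The main obstacle is the middle step: incorporating $\lambda_{g-1}$ into the Hurwitz-theoretic computation and identifying the operator on the infinite wedge to which it corresponds. The pure descendent series $\langle\tau_{2g-2}(\omega)\rangle^E_g$ is only quasimodular, with a genuine non-modular part, so the nontrivial phenomenon to explain is that the $\lambda_{g-1}$ twist cancels this part and leaves the pure Eisenstein series; tracking the Bernoulli-number corrections hidden in the completed cycle $\overline{(g)}$ is where the real work lies. Should a direct operator identification prove awkward, a fallback is to establish the weight and holomorphicity at the cusp abstractly, bounding the relevant space of weight-$2g$ forms, and then determine the finitely many coefficients by an independent low-degree computation together with the $\lambda_g\lambda_{g-1}$ Hodge integral above.
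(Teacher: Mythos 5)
Your proposal is a plan rather than a proof, and the gap sits exactly where you place it yourself: the evaluation of the positive-degree coefficients $\langle\lambda_{g-1};\tau_{g-1}(\omega)\rangle^E_{g,mE}$. The Gromov--Witten/Hurwitz correspondence of Okounkov--Pandharipande converts \emph{stationary descendents} into completed cycles and traces on the infinite wedge; it says nothing about Hodge classes, and there is no known operator on the infinite wedge realizing a $\lambda_{g-1}$ insertion. Your assertion that the Hodge twist ``selects the principal term of the completed cycle'' and produces exactly $\sigma_{2g-1}(m)$ is precisely the content of the lemma, not an input to it. The same issue undermines your opening step: quasimodularity of the $\lambda_{g-1}$-twisted series is not covered by the stationary quasimodularity theorem, so even the claim that $\bF^E_g(\tau_{g-1}(\omega))$ lies in a finite-dimensional space of weight-$2g$ quasimodular forms needs an argument (say via Mumford/GRR reduction of $\lambda_{g-1}$ to tautological descendent and boundary classes, which you do not carry out). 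Your fallback is then circular: once quasimodularity is granted, the space of weight-$2g$ quasimodular forms has dimension growing with $g$, so ``determining the finitely many coefficients by an independent low-degree computation'' means computing many positive-degree Hodge-twisted invariants --- exactly the problem you set out to solve, with no independent method supplied.

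What you do have right is the normalization: the degree-zero contribution is $(-1)^g\int_{\overline{M}_{g,1}}\lambda_g\lambda_{g-1}\psi_1^{g-1}$, and the Faber $\lambda_g\lambda_{g-1}$ evaluation does reproduce the constant term $\frac{g!}{2^{g-1}}\bigl(-\frac{B_{2g}}{2g(2g)!}\bigr)$, as does your $g=1$ check against $C_2$. But these only fix the scalar \emph{after} one knows the series is a multiple of $E_{2g}$, which is the missing step. The paper avoids this entirely by a different route: Pixton's thesis proves the formula \emph{conditionally} on the Virasoro constraints for $\BP^1\times E$, and these constraints are then unconditionally available by combining the Virasoro constraints for elliptic curves with the theorem that Virasoro passes to toric bundles over a base satisfying it. If you want to salvage your approach, the honest path is to first establish that the $\lambda_{g-1}$-twist can be traded for descendent insertions (via GRR and the splitting behavior of Hodge classes at the boundary) and only then invoke the stationary GW/Hurwitz machinery; as written, the central identity is assumed.
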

\begin{proof}
    In~\cite[Proposition 4.4.7]{PixtonBachelor} this formula is given under assuming the Virasoro constraint for $\BP^1\times E$. The Virasoro constraint for any toric bundle over a nonsingular variety which satisfies the Virasoro constraint is proven in~\cite{C15}. Combining this result with the Virasoro constraint for elliptic curves \cite{OP06c}, the result follows.
\end{proof}
When $\beta=mF$, Conjecture~\ref{conj:georgrahul} is equivalent to the following proposition.

\begin{proposition}\label{lem:EllipticClosedForm}
There exists $c\in \BQ$ such that \[\bF^E_g\big(\tau_{a_1}(\omega)\ldots\tau_{a_r}(\omega)\tau_{a_{r+1}}(\one)\ldots\tau_{a_{r'}}(\one)\big)=c\, \Dq^{r-1}\bF^E_g\big(\tau_{g-1}(\omega)\big)\,.\]
\end{proposition}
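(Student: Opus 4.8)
The plan is to prove the identity by an induction that removes one marking at a time, reducing every stationary series to the one-point series $\bF^E_g(\tau_{g-1}(\omega))$ evaluated in Lemma~\ref{lem:EllipticCurve}. The starting observation is that the string, dilaton and divisor equations all hold for the $\lambda_{g-1}$-twisted invariants, since $\lambda_{g-1}=c_{g-1}(\BE_g)$ is pulled back along the morphism forgetting a marking and the map. As $\dim E=1$, the point class $\omega\in H^2(E)$ is a divisor with $\int_{mE}\omega=m$, so multiplying the divisor equation by $q^m$ and summing over $m$ gives the recursion
\[
\bF^E_g\big(\tau_0(\omega)\,\tau_{a_1}(\gamma_1)\cdots\tau_{a_n}(\gamma_n)\big)
=\Dq\,\bF^E_g\big(\tau_{a_1}(\gamma_1)\cdots\tau_{a_n}(\gamma_n)\big)
+\sum_{i\,:\,\gamma_i=\one}\bF^E_g\big(\tau_{a_i-1}(\omega)\,{\textstyle\prod_{j\neq i}}\tau_{a_j}(\gamma_j)\big)
\]
where the correction terms appear because $\omega\cdot\one=\omega$ while $\omega\cdot\omega=0$. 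This is exactly the mechanism producing one factor of $\Dq$ for each removed $\omega$-insertion.

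I would then induct on the triple (total descendent degree $\sum a_i$, number of $\one$-insertions, number $r$ of $\omega$-insertions), ordered lexicographically. The base case $r=r'=1$ forces $a_1=g-1$ by the dimension constraint and equals $\tfrac{g!}{2^{g-1}}C_{2g}$ by Lemma~\ref{lem:EllipticCurve}. In the inductive step I remove the lowest available descendent: a $\tau_0(\one)$ by the string equation, a $\tau_1(\one)$ by the dilaton equation---both lowering the number of $\one$-insertions without creating a $\Dq$---and a $\tau_0(\omega)$ by the displayed recursion, whose main term is $\Dq$ of a series with $r-1$ classes $\omega$ (hence a multiple of $\Dq^{r-1}C_{2g}$ by induction on $r$) and whose correction terms have $r$ classes $\omega$ but one fewer $\one$. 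Every coefficient introduced (the integer $2g-2+n$, the string coefficients) is rational, so the accumulated constant lies in $\BQ$ and each branch lands on the line spanned by $\Dq^{r-1}\bF^E_g(\tau_{g-1}(\omega))$.

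The only configurations not reached this way are those in which every $\omega$-insertion carries descendent $\geq 1$ and every $\one$-insertion descendent $\geq 2$, the smallest example being $\bF^E_3(\tau_1(\omega)\tau_1(\omega))$. Resolving these is the main obstacle. Here I would lower the top descendent using the Virasoro constraints that already power Lemma~\ref{lem:EllipticCurve} (Virasoro for $E$ from~\cite{OP06c}, extended to the $\BP^1$-bundle $\BP^1\times E$ via~\cite{C15}), or equivalently the properties of the stationary descendent theory of $E$ conjectured in~\cite{PixtonBachelor}. The delicate point is that such relations also produce lower-genus and disconnected contributions, so the induction must run simultaneously on $g$, and---since products of Eisenstein derivatives are generally not single Eisenstein derivatives---one must verify that these extra terms recombine onto the line $\Dq^{r-1}C_{2g}$ rather than escaping it. I expect this, and not the production of $\Dq$ (which is automatic from the divisor equation), to be the technical heart of the argument.

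A cleaner way to organise the high-descendent cases, once quasimodularity of weight $2g-2+2r$ is known, is via the holomorphic anomaly: by Lemma~\ref{lem:operators} the quantity $\dc\,\Dq^{r-1}C_{2g}$ is a nonzero rational multiple of $\Dq^{r-2}C_{2g}$, so the divisor and boundary recursions determine the entire $C_2$-dependent part of the series from lower ones, which are proportional to $\Dq^{r-2}C_{2g}$ by induction. What then remains is to match the purely modular part, for which the explicit structure of the $\lambda_{g-1}$-twisted stationary theory must be used; this matching, together with the control of the disconnected Virasoro terms, is where the remaining work lies.
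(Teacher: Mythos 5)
Your induction via string, dilaton and divisor equations is sound as far as it goes, and its terminal step (the divisor equation turning each $\tau_0(\omega)$ into a factor of $\Dq$) coincides with the last line of the paper's proof. However, the configurations you yourself single out as "the main obstacle" --- those in which every $\omega$-insertion carries $a_i\geq 1$ and every $\one$-insertion carries $a_i\geq 2$, the smallest being $\bF^E_3\big(\tau_1(\omega)\tau_1(\omega)\big)$ --- are precisely the substance of the proposition, and your proposal does not resolve them. The appeal to Virasoro constraints is only a plan: as you concede, such relations produce lower-genus and disconnected terms, and you do not verify that they "recombine onto the line" spanned by $\Dq^{r-1}C_{2g}$. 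The alternative route via the holomorphic anomaly equation is likewise circular at this stage, since it presupposes quasimodularity of the very series being computed and still leaves the modular part unmatched. So there is a genuine gap, and it sits exactly where you located it.

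The paper closes this gap with tautological relations on the moduli of curves rather than Virasoro constraints, and this is the idea missing from your argument. The dimension constraint gives $\sum_i a_i = g-1+(r'-r)$. If $r'>r$, then $\sum_i a_i\geq g$, and Proposition~\ref{prop:trr}(i) (the topological recursion relations of \cite{KLLS18}) rewrites the $\psi$-monomial as a tautological class supported on boundary strata; if $r'=r$, then $\sum_i a_i = g-1$, and by \cite{BSZ16} the restriction of the monomial to $M_{g,n}$ is a $\BQ$-linear combination of $\psi_1^{g-1},\ldots,\psi_n^{g-1}$, again up to boundary classes. The crucial companion fact --- which makes both reductions effective and which your proposal never invokes --- is that all boundary strata with a vertex of genus less than $g$ contribute zero, because the insertion $\lambda_{g-1}$ together with the vanishing of $\lambda_h$ on $\mgn(E,m)$ for $h\geq g$ kills them. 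This reduces \emph{every} case, including your problematic $\bF^E_3\big(\tau_1(\omega)\tau_1(\omega)\big)$, to a rational multiple of $\bF^E_g\big(\tau_{g-1}(\omega)\tau_0(\omega)^{r-1}\big)$, and only then is the divisor equation applied to produce $\Dq^{r-1}\bF^E_g\big(\tau_{g-1}(\omega)\big)$. In short: replace your Virasoro step by the pair (\cite{KLLS18} recursion in degree $\geq g$, \cite{BSZ16} spanning of $R^{g-1}(M_{g,n})$ by $\psi_i^{g-1}$) plus the $\lambda$-vanishing on maps to $E$, and the proof closes with no input beyond Lemma~\ref{lem:EllipticCurve}.
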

\begin{proof}
     Boundary strata with a vertex of genus less than $g$ do not contribute because the invariants involve $\lambda_h$ vanishes on $\mgn(E,m)$ when $h\geq g$.  If $r'>r$, then $\sum a_i\geq g$ and we can reduce to the case when $r'=r$ by the topological recursion on the $\psi$-monomial in $R^{\geq g}(\mgn)$ \cite{KLLS18}. If $r'=r$, then $\sum a_i=g-1$ and similar argument as in Section~\ref{sec:ProofQmodHAE}, Case $3$ can be applied.  Therefore $\bF^E_g$ is proportional to 
    \[\bF^E_g\big(\tau_{g-1}(\omega)\tau_0(\omega)^{r-1}\big)=\Dq^{r-1}\bF^E_g\big(\tau_{g-1}(\omega)\big)\] where the equality comes from the divisor equation.
\end{proof}
\begin{remark}
 One can find a closed formula for the constant $c\in \BQ$ by integrating tautological classes on $\Mbar_{g,n}$. 
\end{remark}

\section{Holomorphic anomaly equation}\label{sec:HAEm}
This section contains a proof of Proposition~\ref{prop:mcfimplieshae}. We derive the holomorphic anomaly equation for $m\geq 1$ from the conjectural multiple cover formula, such that both are compatible\footnote{We should point out that this derivation should be lifted to the cycle-valued holomorphic anomaly equation. Tautological classes play no role here.}. It turns out that the equation is almost identical to the one in the primitive case. Additional factors appear only in the last two terms, which are specific to K3 surfaces. We refer to~\cite[Section 7.3]{OP19} for explanations on the appearance of these terms. 
\begin{proof}[Proof of Proposition~\ref{prop:mcfimplieshae}]
Let $\gamma_1,\ldots,\gamma_n\in H^*(S)$ with
\[ \deg = \sum_i \deg(\gamma_i)\,,\quad \degm= \sum_i\degm(\gamma_i)\,. \]
We will simply write $\gamma$ to denote $\gamma_1,\ldots,\gamma_n$.
Assume that the multiple cover formula~\eqref{eq:mcf} holds for all divisors $d\mid m$ and all descendent insertions. Using Lemma~\ref{lem:equiv}, also Conjecture~\ref{conj:mcf} holds. By Proposition~\ref{prop:mcfimpliesqmod}, the descendent potentials are quasimodular forms of level~$m$ and we can consider the $\dc$-derivative. We apply the $\dc$-derivative to Conjecture~\ref{conj:mcf} and use the commutator relations Lemma
~\ref{lem:operators} to obtain:
\begin{align*}
    \dc \bF_{g,m}\big(\alpha;\gamma\big) &= \dc \Big(m^{\deg - \degm} \bT_{m,2g-2+\degm} \bF_{g,1}\big(\alpha;\gamma\big)\Big) \\
    &=m^{\deg - \degm+1} \bT_{m,2g-4+\degm} \dc\bF_{g,1}\big(\alpha;\gamma\big)\,.
\end{align*}
We want to explain that the last row precisely recovers the definition of $\bH_{g,m}$ in~\eqref{eq:fullHAE}, after applying the holomorphic anomaly equation for the primitive series~\cite[Theorem 4]{OP18}:
\[ \dc\bF_{g,1}\big(\alpha; \gamma\big) = \bH_{g,1}\big(\alpha; \gamma\big)\,.\]
We do so by explaining how each term of $\bH_{g,1}\big(\alpha; \gamma\big)$ is affected:
\begin{enumerate}[label=(\roman*)]
\item The degree $\deg$ of $\bF_{g-1,1}\big(\alpha;\gamma \, \Delta_{\BP^1}\big)$ has increased by one. The genus, however, dropped by~$1$. Thus, the first term precisely matches the multiple cover formula, i.e.\ 
\[ \bF_{g-1,m}\big(\alpha; \gamma\Delta_{\BP^1}\big)=m^{\deg - \degm+1} \bT_{m,2g-4+\degm}\Big(\bF_{g-1,1}\big(\alpha;\gamma \,\Delta_{\BP^1}\big)\Big)\,. \]
\item The virtual class is non-zero only for curve class $\beta=0$ and genus $0$, $1$, see Section~\ref{sec:introduction}. In these cases, the potential $\bF_{g_2}^{vir}$ is simply a number and the operator $\bT_{m,\ell}$ acts non-trivially only on $\bF_{g_1,m}$. We distinguish the two cases:\medskip

\noindent $g_2 = 0$. The virtual class is given by the fundamental class and the integral is given by intersection pairing on $S$. Non-trivial terms are obtained from $\delta_i^\vee=1$ or $F$. If $\delta_i^\vee=1$ then 
\[ \deg(\gamma_{I_2}) = \degm(\gamma_{I_2} )= 2\,.\]
The modified degree $\degm$ of $\bF_{g_1,1}\big(\alpha_{I_1}; \gamma_{I_1} \delta_i\big)$ has decreased by $2$, whereas $\deg$ decreased by $1$ (the insertion $\delta_i = F$ contributes $\deg = 1$). The term thus matches the multiple cover formula:
\begin{align*} &\bF_{g_1,m}\big(\alpha_{I_1}; \gamma_{I_1} \delta_i\big)\\
&= m^{\deg - \degm+1} \bT_{m,2g-4+\degm} \Big(\bF_{g_1,1}\big(\alpha_{I_1}; \gamma_{I_1} \delta_i\big)\Big)\,. \end{align*}
If $\delta_i^\vee=F$ then 
\[ \deg(\gamma_{I_2}) = 1\,,\quad \degm(\gamma_{I_2} )= 2\,.\]
The modified degree $\degm$ of $\bF_{g_1,1}\big(\alpha_{I_1}; \gamma_{I_1} \delta_i\big)$ has decreased by $2$, whereas $\deg$ decreased by $1$. The term matches the multiple cover formula.
\medskip

\noindent $g_2=1$. The virtual class is given by $c_2(S)$ and the integral is given by intersection pairing on $S$. Non-trivial terms are obtained only from $\delta_i^\vee=1$ and 
\[ \deg(\gamma_{I_2}) = \degm(\gamma_{I_2} )= 0\,.\]
Analogously to case (i), the degree $\deg$ of $\bF_{g_1,1}\big(\alpha_{I_1}; \gamma_{I_1} \delta_i\big)$ has increased by $1$, $\degm$ remained unchanged, and the genus dropped by $1$. The term matches the multiple cover formula. 
\item The modified degree $\degm$ of $\bF_{g,1}\big(\alpha \psi_i; \gamma_1,\ldots,\pi^*\pi_*\gamma_i,\ldots,\gamma_n \big)$ has decreased by $2$, whereas $\deg$ decreased by $1$. Again we find that the term matches the multiple cover formula
\begin{align*} &\bF_{g,m}\big(\alpha \psi_i; \gamma_1,\ldots,\pi^*\pi_*\gamma_i,\ldots,\gamma_n \big)\\
&= m^{\deg - \degm+1} \bT_{m,2g-4+\degm} \Big(\bF_{g,1}\big(\alpha \psi_i; \gamma_1,\ldots,\pi^*\pi_*\gamma_i,\ldots,\gamma_n \big)\Big)\,. \end{align*}
\item The degree of $\langle \gamma_i, F \rangle \bF_{g,1}\big(\alpha;\gamma_1,\ldots,F,\ldots,\gamma_n\big)$ remains unchanged, whereas $\degm$ decreased by $2$. An additional factor of $\frac{1}{m}$ therefore appears:
\begin{align*}
    &\frac{1}{m}\langle \gamma_i, F \rangle \bF_{g,m}\big(\alpha;\gamma_1,\ldots,F,\ldots,\gamma_n\big)\\
    &= m^{\deg - \degm+1} \bT_{m,2g-4+\degm}\Big( \langle \gamma_i, F \rangle \bF_{g,1}\big(\alpha;\gamma_1,\ldots,F,\ldots,\gamma_n\big)\Big)\,.
\end{align*}
\item The term $\bF_{g,1}\big(\ldots, \sigma_1(\gamma_i,\gamma_j),\ldots,\sigma_2(\gamma_i,\gamma_j),\ldots\big)$ is similar to the previous case: $\deg$ remains unchanged, whereas $\degm$ decreases by $2$, giving rise to an additional factor of $\frac{1}{m}$:
\begin{align*} &\frac{1}{m} \bF_{g,m}\big(\gamma_1,\ldots, \sigma_1(\gamma_i,\gamma_j),\ldots,\sigma_2(\gamma_i,\gamma_j),\ldots,\gamma_n\big) \\
&=m^{\deg - \degm+1} \bT_{m,2g-4+\degm}\Big(\bF_{g,1}\big(\gamma_1,\ldots, \sigma_1(\gamma_i,\gamma_j),\ldots,\sigma_2(\gamma_i,\gamma_j),\ldots,\gamma_n\big)\Big)
\end{align*}
\end{enumerate}
We arrive at the level $m$ holomorphic anomaly equation~\eqref{eq:fullHAE} which appeared in Section~\ref{sec:introduction}.
\end{proof}
\subsection{Divisor equation}

For primitive curve classes, it was pointed out in~\cite[Section 3.6, Case (i)]{OP18} that the holomorphic anomaly equation in genus $0$ is compatible with the divisor equation. For divisibility $m$, let
\[\frac{d}{d\gamma} = \langle \gamma, F\rangle \Dq + m\langle \gamma, W\rangle\,, \hspace{3mm} \gamma\in H^2(S)\,. \]
The divisor equation implies that
\begin{align*}&\bF_{g,m}\big(\tau_{a_1}(\gamma_1)\ldots\tau_{a_{n-1}}(\gamma_{n-1})\tau_0(\gamma_n)\big) \\&=\frac{d}{d\gamma_n}\bF_{g,m}\big(\tau_{a_1}(\gamma_1)\ldots\tau_{a_{n-1}}(\gamma_{n-1})\big)\\
&\mi +\sum_{i=1}^{n-1} \bF_{g,m}\big(\tau_{a_1}(\gamma_1)\ldots\tau_{a_i-1}(\gamma_i\cup \gamma_n)\ldots\tau_{a_{n-1}}(\gamma_{n-1})\big)\,.
\end{align*}
The compatibility with the divisor equation corresponds to 
\begin{align}
    &\bH_{g,m}\big(\tau_{a_1}(\gamma_1)\ldots\tau_{a_{n-1}}(\gamma_{n-1})\tau_0(\gamma_n)\big)\label{eq:divisoreq}\\
    &= \frac{d}{d\gamma_n}\bH_{g,m} \big(\tau_{a_1}(\gamma_1)\ldots\tau_{a_{n-1}}(\gamma_{n-1})\big) \nonumber\\
    &\mi -2k \bF_{g,m}\big(\tau_{a_1}(\gamma_1)\ldots\tau_{a_{n-1}}(\gamma_{n-1})\big) \nonumber\\
    & \mi +\sum_{i=1}^{n-1} \bH_{g,m}\big(\tau_{a_1}(\gamma_1)\ldots\tau_{a_i-1}(\gamma_i\cup \gamma_n)\ldots\tau_{a_{n-1}}(\gamma_{n-1})\big)\,,\nonumber
\end{align}
    where $k$ is the weight of $\bF_{g,m}\big(\tau_{a_1}(\gamma_1)\ldots\tau_{a_{n-1}}(\gamma_{n-1})\big)$ and we have used the commutator relation
\[ \Big[ \dc,\Dq \Big] = -2k\,. \]

The same check as in the primitive case works for arbitrary divisibility. This relies on the fact that the divisor equation for $W$ is the same as applying the differential operator
\[ \Dq = q \frac{d}{dq}\]
to the generating series. Indeed, for the curve class $\beta = mB+ hF$,
\[\langle \beta,W \rangle = -2m +h +m = h-m\,, \]
which matches the exponent of $q^{h-m}$ in the generating series~$\bF_{g,m}$.
The divisor equation for $F$ acts as multiplication by $m$ on the generating series.

In Section~\ref{sec:ProofQmodHAE}, the refined induction reduces any generating series ultimately to genus $0$ and $1$. We thus have to justify compatibility of the holomorphic anomaly equation for generating series of the form
\[ \bF_{1,m} \big( \tau_0(\pt) \tau_0(\gamma_1)\ldots\tau_0(\gamma_n)\big)\,,\quad \gamma_i\in H^2(S)\,. \]
This compatibility however is true. By Proposition~\ref{prop:initial}, the multiple cover formula, which is compatible with the divisor equation, holds in genus $\leq 1$. Thus, we also find compatibility for the holomorphic anomaly equation.

\begin{example}
We consider $\bF_{0,m}\big(\tau_0(W)^2\big)$ to illustrate the above compatibility. To compute $\bH_{0,m}$, we use that $\sigma(W\boxtimes W) = U^\perp$, where the endomorphism~$\sigma$ is as defined in Section~\ref{sec:introduction}.  Since the curve classes are contained in $U$, application of the divisor equation to a basis of $U^\perp$ implies
\[\bF_{0,m} \big( \tau_0(U^\perp)\big) = 0\,. \]
We find that
\[ \bH_{0,m}\big(\tau_0(W)^2\big) = -4 \bF_{0,m}\big(\tau_1(\one)\tau_0(W)\big) + \frac{40}{m} \bF_{0,m}\big(\tau_0(F)\tau_0(W)\big)\,.\]
In the above notation, $\gamma_n = W$ is the second $W$ and $k=-10$ is the weight of $\bF_{0,m}\big(\tau_0(W)\big)$. We have to check that
\[ \bH_{0,m}\big(\tau_0(W)^2\big) = \Dq\bH_{0,m}\big(\tau_0(W)\big) +20 \bF_{0,m}\big(\tau_0(W)\big)\,. \]
By the dilaton equation, we can verify
\begin{align*} &\bH_{0,m}\big(\tau_0(W)^2\big) - \Dq \bH_{0,m}\big(\tau_0(W)\big)\\
&= -2 \Dq \bF_{0,m}\big(\tau_1(\one)\big) -4 \bF_{0,m}\big(\tau_0(W)\big) +\frac{20}{m} \bF_{0,m}\big(\tau_0(F)\tau_0(W)\big) \\
&= 4 \Dq \bF_{0,m}\big(\emptyset\big) - 4 \Dq \bF_{0,m}\big(\emptyset\big) +20\bF_{0,m}\big(\tau_0(W)\big) \\
&=20 \bF_{0,m}\big(\tau_0(W)\big)\,. \end{align*}
\end{example}

\begin{example}\label{ex:HAEm}
The above example in genus~$0$ illustrates how the second last term in the holomorphic anomaly equation (\ref{eq:hae}) plays a role. We consider
\[ \bF_{1,m} \big( \tau_1(W) \tau_0(W)\big) \]
to show how the last term, i.e.\ the term involving $\sigma$, interacts non-trivially with the other terms. 
The corresponding series $\bH_{1,m}$ are
\begin{align*} \bH_{1,m}\big( \tau_1(W)\tau_0(W)\big) &= 2\bF_{0,m}\big(\tau_1(W)\tau_0(W)\tau_0(\one)\tau_0(F)\big) \\
&\mi-2\Big( \bF_{1,m}\big( \tau_2(\one)\tau_0(W)\big) + \bF_{1,m}\big(\tau_1(W)\tau_1(\one)\big) \Big) \\
&\mi+\frac{20}{m} \Big( \bF_{1,m}\big( \tau_1(F)\tau_0(W)\big) + \bF_{1,m}\big( \tau_1(W)\tau_0(F)\big) \Big) \\
&\mi-\frac{2}{m} \bF_{1,m}\big( \psi_1; \Delta_{U^\perp} \big)\,,\\
\bH_{1,m}\big( \tau_1(W) \big) &= 2 \bF_{0,m}\big(\tau_1(W)\tau_0(\one)\tau_0(F)\big) \\
&\mi- 2 \bF_{1,m}\big(\tau_2(\one)\big) \\
&\mi+\frac{20}{m} \bF_{1,m}\big(\tau_1(F)\big)\,. \end{align*}
Let $k=-8$ be the weight of $\bF_{1,m}\big(\tau_1(W)\big)$. Then~\eqref{eq:divisoreq} is equivalent to
\[ \bH_{1,m}\big(\tau_1(W)\tau_0(W)\big) = \Dq \bH_{1,m}\big(\tau_1(W) \big) -2k \bF_{1,m}\big(\tau_1(W)\big)\,.\]
The term $\bF_{1,m}\big( \psi_1; \Delta_{U^\perp} \big)$ can be computed using
\[ \psi_1 = [\delta_1]+\frac{1}{24} [\delta_0] \in A^1(\Mbar_{1,2})\,, \]
where $[\delta_0]\in A^1(\Mbar_{1,2})$ is the class of the pushforward of the fundamental class under the map
\[ \Mbar_{0,4}\to \Mbar_{1,2}\]
gluing the third and fourth markings and $[\delta_1]$ is the class of the boundary divisor of curves with a rational component carrying both markings. The genus~$0$ contribution vanishes by the divisor equation. Since the rank of $U^\perp$ is $20$, we obtain the genus~$1$ contribution
\[ \bF_{1,m}\big( \psi_1; \Delta_{U^\perp} \big) = 20 \bF_{1,m} \big( \tau_0(\pt)\big)\,. \]
The divisor equation for $F$ implies that
\[ \frac{20}{m}\bF_{1,m} \big(\tau_1(W)\tau_0(F) \big ) = 20\bF_{1,m}\big(\tau_1(W)\big) + \frac{20}{m} \bF_{1,m}\big(\tau_0(\pt)\big)\,. \]
We can now verify the compatibility by a direct computation using divisor and dilaton equation:
\begin{align*}
\bH_{1,m}\big( \tau_1(W) \tau_0(W)\big) &= \Dq \bH_{1,m}\big( \tau_1(W)\big) -2 \bF_{1,m}\big(\tau_1(W)\big) -2 \bF_{1,m}\big(\tau_1(W)\tau_1(\one)\big) \\
&\mi+\frac{20}{m} \bF_{1,m}\big(\tau_0(\pt) \big) + \frac{20}{m} \bF_{1,m}\big( \tau_1(W)\tau_0(F) \big) \\
&\mi-\frac{2}{m}\bF_{1,m}\big(\psi_1; \Delta_{U^\perp}\big) \\
&= \Dq \bH_{1,m}\big( \tau_1(W)\big)-4 \bF_{1,m}\big(\tau_1(W)\big)\\
&\mi+\frac{20}{m} \bF_{1,m}\big(\tau_0(\pt)\big)+ \frac{20}{m} \bF_{1,m}\big( \tau_1(W)\tau_0(F) \big) \\
&\mi-\frac{40}{m} \bF_{1,m} \big( \tau_0(\pt)\big) \\
&=\Dq \bH_{1,m}\big( \tau_1(W)\big) + 16\bF_{1,m}\big(\tau_1(W)\big)\,. \end{align*}
\end{example}

\section{Relative holomorphic anomaly equation}\label{sec:RelativeHAE}

In this section, we first state the degeneration formula for the reduced virtual class under the degeneration to the normal cone. For primitive curve class, the formula is proven in \cite{MPT10}. For sake of completeness, we summarize a proof for arbitrary divisibility in Appendix~\ref{app:degeneration}. Then, we state the relative holomorphic anomaly equation and prove the compatibility with the degeneration formula.

\subsection{Degeneration formula}
Let $S\to \BP^1$ be an elliptic K3 surface with a section. For $m\geq 1$, let $\beta =mB+hF$ be a curve class.
Choose a smooth fiber $E$ of $S\to \BP^1$. Let $\epsilon\colon \CS\to \BA^1$ be the total space of the degeneration to the normal cone of $E$ in $S$. This space corresponds to the degeneration 
\begin{equation} \label{pic:degen}
    S \rightsquigarrow S\cup_E \BP^1\times E \,.
\end{equation}
Over the center $\iota: 0 \hookrightarrow \BA^1$, the fiber is $S\cup_E \BP^1\times E$ and over $t\neq 0$, the fiber is isomorphic to $S$. Let $\mgn(\epsilon,\beta)$ be the moduli space of stable maps to the degeneration $\CS$. Over $t\neq 0$, this moduli space is isomorphic to $\mgn(S,\beta)$ and over $t=0$, this moduli space parametrizes stable maps to the expanded target \[\widetilde{\CS}_0 = S\cup_E \BP^1\times E \cup_E \cdots \cup_E  \BP^1\times E\,.\]
Let $$\nu = (g_1,g_2,n_1,n_2,h_1,h_2)$$ be a splitting of the discrete data $g,n,h$ and let $\beta_i = mB+h_iF$ be the splitting of the curve class. An ordered partition of $m$ \[\mu = (\mu_1,\ldots,\mu_l)\] specifies the contact order along the relative divisor $E$. 

Let $l = \mathrm{length}(\mu)$ and $\Mbar_{g,n}(\CS_0,\nu)_{\mu}$ be the fiber product 
\begin{equation}
    \Mbar_{g,n}(\CS_0,\nu)_{\mu} = \Mbar_{g_1,n_1}(S/E,\beta_1)_\mu \times_{E^l} \Mbar^{\bullet}_{g_2,n_2}(\BP^1\times E/E,\beta_2)_{\mu}
\end{equation}
of the boundary evaluations at relative markings\footnote{We put $^\bullet$ to indicate (possibly) disconnected theory. Namely, for each connected component $C$ of the domain curve, intersection of $C$ with the relative divisor $E$ is nontrivial.} and let 
\begin{equation*}
    \iota_{\nu\mu}\colon \Mbar_{g,n}(\CS_0,\nu)_{\mu} \to \Mbar_{g,n}(\CS_0,\beta) 
\end{equation*}
be the finite morphism. Let $\Delta_{E^l}\colon E^l \to E^l\times E^l$ be the diagonal embedding.


\begin{theorem}\label{thm:degeneration}
The reduced virtual class of maps to the degeneration~\eqref{pic:degen} satisfies the following properties.
\begin{enumerate}[label=(\roman*)]
    \item For $\iota_t\colon \{t\}\hookrightarrow \BA^1$, the Gysin pullback of reduced class is given by
    \[\iota_t^![\Mbar_{g,n}(\epsilon,\beta)]^{red} = [\Mbar_{g,n}(\CS_{t},\beta)]^{red} \,.\]
    \item For the special fiber, 
    \[ [\Mbar_{g,n}(\CS_0, \beta)]^{red} = \sum_{\nu,\mu} \frac{\prod_i \mu_i}{l!}\iota_{\nu\mu*}[\Mbar_{g,n}(\CS_0,\nu)_{\mu}]^{red} \,.\]
    \item On the special fiber, we have the factorization 
    \begin{align*}
    [\Mbar_{g,n}(\CS_0,\nu)_{\mu}]^{red} = \Delta_{E^l}^!\Big(&[\Mbar_{g_1,n_1}(S/E,\beta_1)_{\mu}]^{red}\\
     &\mi \times [\Mbar^\bullet_{g_2,n_2}(\BP^1\times E/E,\beta_2)_{\mu}]^{vir} \Big) \,. 
\end{align*}
\end{enumerate}
\end{theorem}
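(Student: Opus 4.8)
The plan is to adapt the Maulik--Pandharipande--Thomas construction of the reduced virtual class for stable maps to K3 surfaces \cite{MPT10} to the relative and degenerating setting, and then to run the algebraic degeneration formula with respect to this reduced theory. The starting point is the cosection underlying the reduction: the holomorphic symplectic form $\sigma_S\in H^0(S,\Omega^2_S)$ induces an isomorphism $T_S\cong\Omega_S$, and composing with the differential $f^*\Omega_S\to\omega_{\mathcal{C}/B}$ of a stable map $f\colon\mathcal{C}\to S$ over a base $B$ yields a surjection of the obstruction sheaf onto $\mathcal{O}_B$ whenever $f$ is non-constant. Reducing the perfect obstruction theory by this trivial quotient produces $[\,\cdot\,]^{red}$. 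To treat the whole degeneration uniformly, I would equip the total space $\CS\to\BA^1$ with a relative logarithmic two-form $\sigma$ that restricts to $\sigma_S$ on every smooth fibre $\CS_t\cong S$ and, on the special fibre, restricts to $\sigma_S$ on the $S$-component and to a log-symplectic form with poles along the copies of $E$ on each $\BP^1\times E$-bubble (here one uses that the normal bundle $N_{E/S}$ is trivial). This furnishes a cosection, hence a relative reduced perfect obstruction theory, on $\Mbar_{g,n}(\epsilon,\beta)$ over $\BA^1$.

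With the relative reduced obstruction theory in place, part~(i) is \emph{deformation invariance}: since the theory is constructed relative to $\BA^1$ and is compatible with base change, the refined Gysin pullback $\iota_t^!$ along $\iota_t\colon\{t\}\hookrightarrow\BA^1$ carries the relative reduced class to the reduced class of the fibre. For $t\neq 0$ this recovers $[\Mbar_{g,n}(\CS_t,\beta)]^{red}$ via $\CS_t\cong S$, and for $t=0$ it defines the special-fibre class used in (ii)--(iii). The only thing to verify is that $\sigma$ restricts on each fibre to the absolute form defining that fibre's reduction, which holds by construction.

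For parts (ii) and (iii) I would invoke the degeneration formula for virtual classes under expanded degenerations, now applied to the reduced theory. The decomposition of $[\Mbar_{g,n}(\CS_0,\beta)]^{red}$ over the discrete data $\nu$ and the contact partition $\mu$, with weights $\tfrac{\prod_i\mu_i}{l!}$, is formal: the $\mu_i$ record the ramification multiplicities along $E$, while $l!$ accounts for the automorphisms of the chain of $\BP^1\times E$-bubbles and the ordering of the $l$ relative markings. The factorization in (iii) via $\Delta_{E^l}^!$ expresses the gluing of the two relative moduli spaces along the boundary evaluation maps into $E^l$. The genuinely new point, relative to the ordinary virtual class, is the \emph{asymmetry} of the reduction: the cosection built from $\sigma$ is carried by the $S$-component, so along the splitting locus it restricts to the honest K3 reduction cosection on $\Mbar_{g_1,n_1}(S/E,\beta_1)_\mu$ and to the zero cosection on $\Mbar^\bullet_{g_2,n_2}(\BP^1\times E/E,\beta_2)_\mu$. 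This produces $[\,\cdot\,]^{red}$ on the first factor and $[\,\cdot\,]^{vir}$ on the second.

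The main obstacle is to make this asymmetry rigorous on each stratum: one must verify that the reduction cosection stays \emph{surjective} on the K3 factor and \emph{vanishes} on the bubble factor, compatibly with the normalization sequence splitting the relative obstruction theory. Surjectivity on the $S$-factor is guaranteed because $\beta_1=mB+h_1F\neq 0$ for every $m\geq 1$, so the map to $S$ is never constant and $f^*\Omega_S\to\omega$ is nonzero; vanishing on the bubble factor follows from $H^0(\BP^1\times E,\Omega^2)=0$, so the log two-form contributes trivially there. Imprimitivity enters only through the contact orders $\mu_i>1$ and through maps with components or multiple covers lying in the relative divisor $E$, and the delicate part is checking that neither the predeformability condition nor the cosection construction is disturbed by these phenomena. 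This is exactly where tracking $\sigma$ through the expanded degeneration and the deformation-to-the-normal-cone argument defining $\Delta_{E^l}^!$ demands care. Once the cosection is shown to behave as claimed on each stratum, parts (ii) and (iii) follow by the same bookkeeping as in the primitive case \cite{MPT10}.
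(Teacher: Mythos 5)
There is a genuine gap, and it lies at the foundation of your construction: the global two-form you propose on the degeneration does not exist. You ask for a relative log two-form on $\CS\to\BA^1$ restricting to the symplectic form $\sigma_S$ on the $S$-component of the special fibre and to a log-symplectic form \emph{with poles along the copies of $E$} on each $\BP^1\times E$-bubble. These two local descriptions cannot glue: along the common divisor $E$ the residue of $\sigma_S$ is zero (it is regular there), while a log-symplectic form $dz/z\wedge dw$ on $\BP^1\times E$ has nonzero residue along each copy of $E$, and matching (opposite) residues along the gluing divisor is exactly the condition for a section of the relative log cotangent complex on a normal crossings fibre. Your subsequent escape route --- that the bubble contribution is ``trivial because $H^0(\BP^1\times E,\Omega^2)=0$'' --- contradicts your own setup: if the form really were log-symplectic on the bubble it would induce a nontrivial cosection on the relative obstruction theory there, and the bubble factor would not carry the ordinary virtual class demanded by part (iii). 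So the asymmetry you correctly identify as the heart of the theorem is not actually produced by your construction.

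The paper's construction avoids all of this: the cosection on $\Mbar_{g,n}(\epsilon,\beta)$ is induced by pulling back $\sigma_S$ under the contraction $\rho\colon\TCS\to S\times\BA^1\to S$. Each bubble maps under $\rho$ into the curve $E\subset S$, and any two-form restricts to zero on a curve, so the cosection vanishes identically on the bubble factors and agrees with the usual K3 cosection on the $S$-component --- no log structures, no residue matching, and the asymmetry (reduced class on $\mg(S/E,\beta_1)$, virtual class on $\mg^\bullet(\BP^1\times E/E,\beta_2)$) is automatic. Two further points you leave unresolved are actually where the paper does its work. First, the step you dismiss as ``the same bookkeeping as in the primitive case'' is the content of the paper's Lemma~\ref{lem:potDegeneration}: a morphism of exact triangles from $\CO[1]\boxplus 0\to\Ee_1\boxplus\Ee_2\to\Ee_\mu$ to their reduced counterparts, whose key commuting square holds precisely because the cosections on $\TCS$ and on $(S,E)$ come from the \emph{same} form $\sigma_S$; the theorem is then a direct consequence. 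Second, you never address why the $S/E$ side in part (iii) carries the \emph{connected} theory while only the bubble carries disconnected theory: for $m\geq 1$ the reduced class of $\mgn^\bullet(S/E,\beta_1)$ vanishes on all components parametrizing maps with two or more connected components (each nonconstant component contributes its own reduction), which is stated as part of the paper's proof and is needed for the formula as written.
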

\begin{proof}
When $m\geq 1$, the reduced class of the disconnected moduli space $\mgn^\bullet(S/E,\beta)$ vanishes on all components parameterizing maps with at least two connected components. Therefore, disconnected theory can only appear on the bubble $\BP^1\times E$. The proof is given in Appendix~\ref{app:degeneration}. 
\end{proof}
Denote an ordered cohomology weighted partition by \[\unmu = \big( (\mu_1,\delta_1), \ldots, (\mu_l,\delta_l) \big)\,,\; \delta_i\in H^*(E)\]
 and let $\omega \in H^2(E)$ be the point class. The descendent potential for the pair $(S,E)$ is defined analogously to the absolute case:
\[ \bF_{g,m}^{\rel}\big(\alpha;\gamma_1,\ldots,\gamma_n\mid\unmu\big) = \sum_{h\geq 0} \big\langle \alpha;\gamma_1,\ldots,\gamma_n\mid\unmu\big\rangle_{g,mB+hF}^{S/E}\, q^{h-m}\,.\] 
The descendent potential for the pair $(\BP^1\times E,E)$ is defined by
\[\bG^{\rel,\bullet}_{g,m}\big(\alpha;\gamma_1,\ldots,\gamma_n\mid\unmu \big) = \sum_{h\geq 0} \lan \alpha;\gamma_1,\ldots,\gamma_n\mid\unmu\ran_{g,mB+hF}^{\BP^1\times E/E,\bullet}\; q^h\,.\]
As a corollary, we get the degeneration formula of reduced Gromov--Witten invariants.
\begin{corollary}\label{cor:degeneration} 
Let $\gamma_1,\ldots,\gamma_n\in H^*(S)$ and choose a lift of these cohomology classes to the total space $\CS$. Then
\begin{align}
\bF_{g,m}\big(\tau_{a_1}(\gamma_1)\ldots\tau_{a_n}(\gamma_n)\big) = \sum_{\nu}\sum_{\unmu\neq \underline{\mu_\omega}} \frac{\prod_i \mu_i}{l!} \bF^{\rel}_{g_1,m}\big(\ldots\mid\unmu\big) \cdot\bG^{\rel,\bullet}_{g_2,m}\big(\ldots\mid\unmu^\vee\big)\,,   
\end{align}
where  \[\unmu^\vee = \big( (\mu_1,\delta_1^\vee), \ldots, (\mu_l,\delta_l^\vee) \big) \text{ and } \underline{\mu_\omega}=\big( (\mu_1,\omega), \ldots, (\mu_l,\omega)\big)\,.\]
\end{corollary}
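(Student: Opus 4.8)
The plan is to deduce the corollary directly from Theorem~\ref{thm:degeneration} by capping the reduced class with the descendent insertions and reorganizing the result as a product of generating series. First I would fix the curve class $\beta=mB+hF$ and the integrand $\prod_i\psi_i^{a_i}\ev_i^*(\gamma_i)$, using the chosen lifts of the $\gamma_i$ to $H^*(\CS)$ so that all insertions are defined on the family $\Mbar_{g,n}(\epsilon,\beta)$ over $\BA^1$. Since $\{0\}$ and $\{t\}$ are rationally equivalent on $\BA^1$, the degrees of $\iota_0^![\Mbar_{g,n}(\epsilon,\beta)]^{red}$ and $\iota_t^![\Mbar_{g,n}(\epsilon,\beta)]^{red}$ capped with the restriction of a fixed class from the total space agree; combined with part~(i) of Theorem~\ref{thm:degeneration} this gives the deformation-invariance identity
\[ \big\langle\tau_{a_1}(\gamma_1)\cdots\tau_{a_n}(\gamma_n)\big\rangle^{S}_{g,\beta}=\int_{[\Mbar_{g,n}(\CS_0,\beta)]^{red}}\prod_i\psi_i^{a_i}\ev_i^*(\gamma_i)\,. \]
I would then substitute the splitting of the special-fibre class from parts~(ii) and~(iii), writing it as a sum over splittings $\nu$ and contact partitions $\mu$ of $\tfrac{\prod_i\mu_i}{l!}\,\iota_{\nu\mu*}\Delta_{E^l}^!\big([\Mbar_{g_1,n_1}(S/E,\beta_1)_\mu]^{red}\times[\Mbar^\bullet_{g_2,n_2}(\BP^1\times E/E,\beta_2)_\mu]^{vir}\big)$.

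Next, by the projection formula the insertions pull back through the finite map $\iota_{\nu\mu}$, and the markings, $\psi$-classes and evaluation classes distribute according to $\nu$ between the two factors. The refined Gysin map $\Delta_{E^l}^!$ is then evaluated by inserting the K\"unneth decomposition of the diagonal of $E^l$, $\Delta_{E^l}=\sum\delta\boxtimes\delta^\vee$ over a basis of $H^*(E)^{\otimes l}$; this is precisely what produces, at each of the $l$ relative markings, a cohomology-weighted insertion $\delta_i$ on the $S/E$ factor and its dual $\delta_i^\vee$ on the $\BP^1\times E$ factor, i.e.\ the weighted partitions $\unmu$ and $\unmu^\vee$. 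Summing over $h=h_1+h_2$ converts the product of $q$-coefficients into the product of generating series $\bF^{\rel}_{g_1,m}(\cdots\mid\unmu)$ and $\bG^{\rel,\bullet}_{g_2,m}(\cdots\mid\unmu^\vee)$, with the normalisations matching through $q^{h_1-m}\cdot q^{h_2}=q^{h-m}$. At this stage one obtains the asserted identity, but with the sum ranging over \emph{all} weighted partitions $\unmu$.

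The main obstacle is therefore the exclusion of the single term $\unmu=\underline{\mu_\omega}$, whose dual $\underline{\mu_\omega}^\vee=\underline{\mu_{\one}}$ carries the fundamental class $\one\in H^0(E)$ at every relative marking on the $\BP^1\times E$ factor. The point I would establish is that this contribution vanishes. The surface $\BP^1\times E$ carries the translation action of the elliptic curve $E$ on the second factor; it preserves the relative divisor $\{\infty\}\times E$ and, because no stable map in class $mB+h_2F$ with $m\geq1$ has $E$-invariant image, it acts with finite stabilizers (hence positive-dimensional orbits) on $\Mbar^\bullet_{g_2,n_2}(\BP^1\times E/E,\beta_2)_\mu$. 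The virtual class is invariant, the relative conditions $\underline{\mu_{\one}}$ are basic for this action, and the lifted interior insertions restrict to the bubble as pullbacks from $E$; so for the all-$\one$ relative term the integrand is basic and the integral over the positive-dimensional orbits vanishes. Intuitively this is the ``trivial bubble'' channel, the only one that would merely reproduce the $S$-component contribution already recorded on the left-hand side by deformation invariance.

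Carrying out this vanishing carefully is where the real content lies, and it is not entirely uniform in the insertions: with the natural lift one has $\iota_E^*\gamma_i\in H^0(E)$ (indeed $\iota_E^*\gamma_i=0$) for $\gamma_i\in\{F,\pt\}\cup U^\perp$ and $\iota_E^*\one=\one$, so those insertions keep the bubble integrand basic, whereas $\iota_E^*W=\omega$ is \emph{not} basic and the corresponding term must be reduced first using the divisor equation. Once the $\underline{\mu_\omega}$ channel is shown to drop out, the remaining sum over $\unmu\neq\underline{\mu_\omega}$ is exactly the claimed formula, the rest being the bookkeeping of Theorem~\ref{thm:degeneration}, the projection formula, and K\"unneth.
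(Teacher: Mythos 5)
Your reduction of the corollary to the vanishing of the single channel $\unmu=\underline{\mu_\omega}$ matches the paper's structure, but you prove that vanishing on the wrong factor, and the argument does not survive. You try to kill the \emph{bubble} factor $\bG^{\rel,\bullet}_{g_2,m}\big(\cdots\mid\underline{\mu_{\one}}\big)$ by the translation action of $E$ on $\BP^1\times E$. That factor is not zero in general: the corollary allows an arbitrary lift of the $\gamma_i$ to $\CS$, and the lifts actually used in the paper (Cases 2 and 3 of Section~\ref{sec:ProofQmodHAE}, Example~\ref{ex:genus2}) are precisely those for which the point class of $S$ restricts to the point class of the bubble. With such a lift the bubble carries genuine point insertions, which are translation-invariant but \emph{not} basic (a connected group acts trivially on cohomology, so invariance is vacuous; the orbit argument needs pullback from the quotient $\BP^1$, and you misstate this as ``pullbacks from $E$'' --- classes such as $\omega=p_2^*[\mathrm{pt}]$ are pulled back from $E$ and still not basic). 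The corresponding invariants are nonzero; the paper's own Example~\ref{ex:genus2} records $\bG^{\rel}_{0,1}\big(\tau_0(\pt)\mid(1,\one)\big)=1$ and $\bG^{\rel}_{1,1}\big(\tau_0(\pt)^2\mid(1,\one)\big)=2\Dq C_2$. So for exactly the lifts the paper needs, the excluded term can only vanish because the $(S,E)$ factor vanishes, which your proof never addresses. Your fallback --- use the natural lift, for which bubble insertions are basic or zero, and trade $W$ via the divisor equation --- proves at best a lift-dependent, strictly weaker statement, not the corollary as stated and not the form used in its applications. (In addition, the orbit-vanishing itself silently requires the virtual class to descend along $\Mbar^\bullet_{g_2,n_2}(\BP^1\times E/E,\beta_2)_\mu\to \Mbar^\bullet/E$, a nontrivial claim you assert rather than prove.)

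The vanishing must be established for the K3 relative factor: $\bF^{\rel}_{g_1,m}\big(\cdots\mid\underline{\mu_\omega}\big)=0$ for arbitrary interior insertions. This is what the paper does, and the mechanism is special to the K3 surface: since $c_1\colon\Pic(S)\to H^2(S,\BZ)$ is injective, the homological condition $f_*[C]=\beta_1$ for a relative stable map lifts to a rational equivalence of line bundles, and restricting to the relative divisor gives $\CO_E\big(\sum_i\mu_i x_i\big)\cong\CO_E(mx)$. Hence the relative evaluation map $\Mbar_{g_1,n_1}(S/E,\beta_1)\to E^l$ factors through the codimension-one abelian subvariety $K=\ker\big(E^l\to\mathsf{Pic}^0(E)\big)$, and imposing the generic point condition $\omega$ at all $l$ relative markings gives zero. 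Without this (or an equivalent) argument on the $S/E$ side, your proof has a genuine gap.
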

\begin{proof}
By Theorem~\ref{thm:degeneration}, we are left to prove that the relative profile $\underline{\mu_\omega}$ on $S/E$ has vanishing contribution. Let $x$ be the intersection of the section of the elliptic fibration and the fiber $E$. We consider $(E,x)$ as an abelian variety. Let $K$ be the kernel of the following morphism between abelian varieties
\begin{equation*}
    E^l \to \mathsf{Pic}^0(E)\,,(x_i)_i \mapsto \CO_E\Big(\sum_i \mu_i( x_i-x)\Big) \,.
\end{equation*}
Consider a stable map $f$ from a curve $C$ to an expanded degeneration of $S/E$. The equality $f_*[C] = \beta_1$ (after pushforward to $S$) in $H_2(S,\mathbb{Z})$ lifts to a rational equivalence of line bundles on $S$ because the cycle-class map $$c_1 \colon \mathsf{Pic}(S)\to H^2(S,\mathbb{Z})\cong H_2(S,\mathbb{Z})$$ is injective. Intersecting with the relative divisor, the two line bundles are, respectively, $\CO_E(\sum \mu_i x_i)$ and $\CO_E(mx)$. Thus, we see that the evaluation map $\Mbar_{g_1,n_1}(S/E,\beta_1)\to E^l$ factors through $K$. Since $K\subset E^l$ has codimension~$1$ a generic point on $E^l$ does not lie on~$K$ and thus the contribution from the relative profile $\underline{\mu_\omega}$ vanishes.
\end{proof}


\subsection{Relative holomorphic anomaly equations}\label{sec:degenerationHAE}

Assuming quasimodularity, we have two ways to compute the derivative of $\bF_{g,m}$ with respect to $C_2$: 
\begin{enumerate}[label=(\roman*)]
\item Apply the degeneration formula Corollary~\ref{cor:degeneration}, together with the holomorphic anomaly equations for $(S,E)$ and $(\BP^1\times E,E)$.
\item Apply the holomorphic anomaly equation~\eqref{eq:hae2} for $S$, followed by the degeneration formula for each term.
\end{enumerate}
We argue that both ways yield the same result. This compatibility is parallel to the compatibility proved in~\cite[Section 4.6]{OP19}. We first state the holomorphic anomaly equations for the relevant relative geometries.
\subsection*{Relative $(\BP^1\times E,E)$} Consider $\pi\colon \BP^1\times E\to \BP^1$ as a trivial elliptic fibration over $\BP^1$. For the pair $(\BP^1\times E,E)$ the holomorphic anomaly equation holds for cycle-valued generating series~\cite{OP19}. The equation for descendent potentials can thus be obtained by integrating against tautological classes $\alpha\in R^*(\mgn)$. For insertions $\gamma_i\in H^*(\BP^1\times E,\BQ)$ we will simply write $\gamma$. Let $\unmu = \big( (\mu_1,\delta_1), \ldots, (\mu_l,\delta_l) \big)$ and $\unmu'$ be ordered cohomology weighted partitions. We denote by 
\[\bG^{\sim,\bullet}_{g,m}\big(\unmu\mid\alpha;\gamma\mid\unmu'\big)=\sum_{h\geq 0}\lan \unmu\mid\alpha;\gamma\mid\unmu'\ran^{\BP^1\times E,\sim,\bullet}_{g,m\BP^1+hE}\; q^h\]
the disconnected rubber generating series for $\BP^1\times E$ relative to divisors at $0$ and $\infty$. Let $\Delta_E\subset E\times E$ be the class of the diagonal. Define the generating series 
\begin{align*} &\bP^{\rel,\bullet}_{g,m} \big(\alpha;\gamma \mid \unmu \big) \\
&= \bG^{\rel,\bullet}_{g-1,m} \big(\alpha;\gamma,\Delta_{\BP^1} \mid \unmu \big)\\ &\mi+ 2 \sum_{\substack{g=g_1+g_2\\ \{1,\ldots,n\}=I_1\sqcup I_2\\\forall i\in I_2:\gamma_i\in H^2(E)\\ h\geq 0}} \sum_{\substack{b;b_1,\ldots,b_h \\ l_1,\ldots,l_h}} \frac{\prod_{i=1}^h b_i}{h!} \bG^{\rel,\bullet}_{g_1,m} \big(\alpha_{I_1};\gamma_{I_1}\mid  ((b,\one), (b_i, \Delta_{E,{\ell_i}})_{i=1}^h)\big) \\ &\hspace{5cm} \times \bG^{\sim,\bullet}_{g_2,m} \big( ((b,\one), (b_i, \Delta_{E,{\ell_i}}^\vee)_{i=1}^h) \mid \alpha_{I_2};\gamma_{I_2} \mid \unmu \big) \\ &\mi-2 \sum_{i=1}^n  \bG^{\rel,\bullet}_{g,m}\big(\alpha\psi_i;\gamma_1,\ldots,\gamma_{i-1},\pi^*\pi_*\gamma_i,\gamma_{i+1},\ldots,\gamma_n \mid \unmu\big) \\ &\mi-2 \sum_{i=1}^l  \bG^{\rel,\bullet}_{g,m}\big(\alpha;\gamma \mid (\mu_1,\delta_1),\ldots,(\mu_i, \psi_i^{\rel}\pi^*\pi_* \delta_i),\ldots,(\mu_l,\delta_l) \big) \end{align*}
where $\psi_i^{\rel}$ is the cotangent line class at the $i$-th relative marking and $\Delta_E=\sum \Delta_{E,l_i}\otimes \Delta_{E,l_i}^\vee$ is the pullback of the K\"unneth decomposition of $\Delta_E$ at the corresponding relative marking.
The holomorphic anomaly equation takes the form:
\begin{proposition}(\cite[Proposition 20]{OP19})
$\bG^{\rel,\bullet}_{g,m}(\alpha;\gamma\mid\unmu)$ is a quasimodular form and
\[\dc \bG^{\rel,\bullet}_{g,m}(\alpha;\gamma\mid\unmu) = \bP^{\rel,\bullet}_{g,m}(\alpha;\gamma\mid\unmu)\,.\]
\end{proposition}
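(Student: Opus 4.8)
The plan is to deduce both assertions from the corresponding statements for the elliptic fibration $\pi\colon\BP^1\times E\to\BP^1$, viewed as a trivial elliptic surface over $\BP^1$, established at the cycle level in \cite{OP19}. First I would note that the fiber degree $h$ is the modular variable tracked by $q$, while the base degree $m$ enters only as a fixed parameter and plays no role in the modular analysis; in particular the quasimodularity of $\bG^{\rel,\bullet}_{g,m}$ is exactly the quasimodularity of the relative Gromov--Witten series of this elliptic fibration proved in \cite{OP19}, which guarantees that $\dc$ is well-defined. Since the relative divisor $E$ is a fiber, the pair $(\BP^1\times E,E)$ is governed by the \emph{relative} cycle-valued holomorphic anomaly equation of \cite{OP19}, an identity in $A^*(\mgn)$ of the shape $\dc[\,\cdot\,]=[\,\cdot\,]$ for the connected relative classes.

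Second, I would account for the disconnected ($\bullet$) theory and pass to descendent potentials. A disconnected stable map to $\BP^1\times E$ relative to $E$ decomposes into connected components, so the disconnected series is built from the connected ones by the exponential formula; since $\dc$ differentiates a polynomial in $C_2$ and therefore acts as a derivation, it distributes over this product by the Leibniz rule, each summand applying the connected anomaly equation to a single component while leaving the others intact. Keeping track of which connected term is applied reproduces the four terms defining $\bP^{\rel,\bullet}_{g,m}$: the genus-reduction insertion of $\Delta_{\BP^1}$, the node-splitting term whose second factor degenerates to the rubber series $\bG^{\sim,\bullet}$, and the two $\pi^*\pi_*$ corrections at the absolute and relative markings. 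To obtain the descendent form, I would cap the cycle-valued identity with an arbitrary tautological class $\alpha\in R^*(\mgn)$ pulled back under stabilization and integrate; as the identity already holds in the Chow group and the $\psi$-classes on the right are tautological, integration preserves it and returns the stated relation verbatim.

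The main obstacle I anticipate is the bookkeeping of the node-splitting term, namely verifying that the rubber factor $\bG^{\sim,\bullet}_{g_2,m}$ is correctly weighted. When a connected component is cut by a node over the relative divisor the gluing data is a cohomology-weighted partition $((b,\one),(b_i,\Delta_{E,\ell_i})_{i=1}^h)$, and the combinatorial factors $\tfrac{1}{h!}\prod_i b_i$, together with the Künneth decomposition of $\Delta_E$ at each gluing marking, must be matched against the multiplicities produced by the splitting formula for the reduced class. Confirming that these assemble exactly into the second term of $\bP^{\rel,\bullet}_{g,m}$ is the delicate point; the remaining three terms are a direct transcription of \cite[Proposition 20]{OP19} to the present disconnected, imprimitive setting.
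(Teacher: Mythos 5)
Your proposal is correct and takes essentially the same route as the paper: the paper's own treatment consists of citing \cite[Proposition 20]{OP19} for the cycle-valued holomorphic anomaly equation of the trivial elliptic fibration $\BP^1\times E\to\BP^1$ relative to a fiber, and remarking that the descendent form follows by integrating the cycle-valued identity against tautological classes $\alpha\in R^*(\mgn)$ pulled back under stabilization. Your additional Leibniz-rule bookkeeping for the disconnected $(\bullet)$ theory and the rubber term is a level of detail the paper simply absorbs into the citation, so it supplements rather than diverges from the paper's argument.
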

\subsection*{Relative $(S,E)$}
Since the log canonical bundle of $(S,E)$ is nontrivial, relative moduli spaces in fiber direction have nontrivial virtual fundamental class. Define
\[\bF^{vir-\rel}_{g,0}(\alpha;\gamma\mid\emptyset)=\sum_{h\geq 0}\lan\alpha;\gamma\mid\emptyset\ran^{S/E,vir}_{g,hF}\, q^h\,.\]
Recall that we denote the pullback of the diagonal of $\BP^1$ as \[\Delta_{\BP^1} = 1\boxtimes F + F \boxtimes 1 = \sum_{i=1}^2 \delta_i\boxtimes \delta_i^\vee\,.\] Define a generating series
\begin{align*}
    &\bH^{\rel}_{g,m}\big(\alpha; \gamma \mid \unmu \big) \\
    &= \bF^{\rel}_{g-1,m}\big(\alpha;\gamma,\Delta_{\BP^1}\mid \unmu\big) \\
    &\mi+2\sum_{\substack{g=g_1+g_2\\ \{1,\ldots,n\}=I_1\sqcup I_2\\ i\in\{1,2\}}} \bF^{\rel}_{g_1,m}\big(\alpha_{I_1};\gamma_{I_1},\delta_i\mid \unmu\big)\,\bF^{vir-\rel}_{g_2, 0}\big(\alpha_{I_2};\gamma_{I_2},\delta_i^{\vee}\mid \emptyset\big) \\
    &\mi+2\sum_{\substack{g=g_1+g_2\\\{1,\ldots,n\}=I_1\sqcup I_2\\ \forall i\in I_2:\gamma_i\in H^2(E)\\ h\geq 0}} \sum_{\substack{b;b_1,\ldots,b_h \\ l_1,\ldots,l_h}} \frac{\prod_{i=1}^h b_i}{h!} \bF^{\rel}_{g_1,m} \big(\alpha_{I_1}; \gamma_{I_1}\mid ((b,\one),(b_i, \Delta_{E,{\ell_i}})_{i=1}^h)\big) \\ &\hspace{5cm} \times \bG^{\sim,\bullet}_{g_2,m} \big( ((b,\one), (b_i, \Delta_{E,{\ell_i}}^\vee)_{i=1}^h) \mid \alpha_{I_2}; \gamma_{I_2} \mid \unmu \big)  \\&\mi-2\sum_{i=1}^n \bF^{\rel}_{g,m}\big(\alpha\psi_i;\gamma_1,\ldots,\gamma_{i-1},\pi^*\pi_*\gamma_i,\gamma_{i+1},\ldots,\gamma_n\mid \unmu\big)\\
    &\mi-2 \sum_{i=1}^l  \bF^{\rel}_{g,m}\big(\alpha; \gamma \mid ((\mu_1,\delta_1),\ldots,(\mu_i, \psi_i^{\rel}\pi^*\pi_* \delta_i),\ldots,(\mu_l,\delta_l)) \big)\\
    &\mi+\frac{20}{m}\sum_{i=1}^n\langle\gamma_i,F\rangle\bF^{\rel}_{g,m}\big(\alpha;\gamma_1,\ldots,\gamma_{i-1},F,\gamma_{i+1},\ldots,\gamma_n\mid \unmu\big)\\
    &\mi-\frac{2}{m}\sum_{i<j}\bF^{\rel}_{g,m}\big(\alpha;\gamma_1,\ldots,\underbrace{\sigma_1(\gamma_i,\gamma_j)}_{i \textup{th}},\ldots,\underbrace{\sigma_2(\gamma_i,\gamma_j)}_{j \textup{th}},\ldots,\gamma_n\mid \unmu\big)\,.
\end{align*}
The conjectural holomorphic anomaly equation for $(S,E)$ has the following form: \[\bF^{\rel}_{g,m}(\alpha;\gamma\mid\unmu) \in \frac{1}{\Delta(q)^m}\QMod(m)\] and
\begin{equation}
    \dc\bF^{\rel}_{g,m}(\alpha;\gamma\mid\unmu)=\bH^{\rel}_{g,m}(\alpha;\gamma\mid\unmu)\,.
\end{equation}
\begin{proposition}\label{prop:compatibility}
Let $m\geq 1$. Assuming quasimodularity for $\bF_{g,m}$ and $\bF_{g,m}^{\rel}$, the holomorphic anomaly equations are compatible with the degeneration formula in the above sense.
\end{proposition}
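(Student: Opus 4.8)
The plan is to compute $\dc \bF_{g,m}$ in the two announced ways and to verify that the resulting expressions coincide; quasimodularity enters only to guarantee that $\dc$ is well defined on the potentials involved, while the identity itself is a formal consequence of the degeneration formula and the definitions of $\bH_{g,m}$, $\bH^{\rel}_{g,m}$ and $\bP^{\rel,\bullet}_{g,m}$. For the first way I differentiate the degeneration formula of Corollary~\ref{cor:degeneration}. Since $\dc$ is differentiation with respect to the free generator $C_2$ of $\QMod(m)$ over $\Mod(m)$, hence a derivation, the Leibniz rule gives
\[ \dc \bF_{g,m} = \sum_{\nu}\sum_{\unmu\neq\underline{\mu_\omega}} \frac{\prod_i\mu_i}{l!}\Big( (\dc\bF^{\rel}_{g_1,m})\cdot \bG^{\rel,\bullet}_{g_2,m} + \bF^{\rel}_{g_1,m}\cdot (\dc\bG^{\rel,\bullet}_{g_2,m})\Big)\,, \]
into which I substitute $\dc\bF^{\rel}_{g,m}=\bH^{\rel}_{g,m}$ and $\dc\bG^{\rel,\bullet}_{g,m}=\bP^{\rel,\bullet}_{g,m}$, the latter being the known Proposition of~\cite{OP19}. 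For the second way I substitute the absolute equation $\dc\bF_{g,m}=\bH_{g,m}$ with $\bH_{g,m}$ as in~\eqref{eq:fullHAE}, and degenerate each of its five terms by Corollary~\ref{cor:degeneration}. The proposition is the assertion that these two expressions agree, and I prove it by a term-by-term comparison.

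The comparison is organized by the location, after degeneration, of the node created by the anomaly operator on $S$: it may lie over the main component $S$, over the bubble $\BP^1\times E$, or on the relative divisor $E$. The genus-reduction term $\bF_{g-1,m}(\alpha;\gamma,\Delta_{\BP^1})$ degenerates into these three families: a node over $S$ reproduces the genus-reduction term of $\bH^{\rel}$ paired with $\bG^{\rel,\bullet}$; a node over $\BP^1\times E$ reproduces $\bF^{\rel}$ paired with the genus-reduction term of $\bP^{\rel,\bullet}$; and a node on $E$ produces the rubber contributions discussed below. The separating term $2\sum\bF_{g_1,m}(\ldots,\delta_i)\,\bF^{vir}_{g_2}(\ldots,\delta_i^\vee)$ behaves identically, except that $\bF^{vir}$ is a number supported on the trivial curve class in genus $0$ and $1$; its relative degeneration $\bF^{vir-\rel}$ stays on the $S/E$ side and reproduces the separating-virtual term of $\bH^{\rel}$. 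Finally, the $\psi$-insertion term and the two K3-specific terms (the $\frac{20}{m}\langle\gamma_i,F\rangle$ term and the $\sigma$ term) modify only the original insertions $\gamma_i$, and distributing these to the $S/E$ factor reproduces the corresponding terms of $\bH^{\rel}$.

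Two points carry the bulk of the work. The first is the rubber calculus for the straddling case: when the created node sits on $E$, a rubber component of $\BP^1\times E$ bubbles off and the node is resolved into a pair of relative markings joined along the K\"unneth diagonal $\Delta_E$. I must show that these configurations are produced, with the correct multiplicities, precisely by the $\bG^{\sim,\bullet}$-term of $\bH^{\rel}$ glued to $\bG^{\rel,\bullet}$ together with the $\bG^{\sim,\bullet}$-term of $\bP^{\rel,\bullet}$ glued to $\bF^{\rel}$. This amounts to matching the degeneration prefactors $\frac{\prod_i\mu_i}{l!}$ against the rubber prefactors $\frac{\prod_i b_i}{h!}$ and reconciling the refinements of the contact partition on the two sides; here I follow the bookkeeping of~\cite[Section 4.6]{OP19}, which establishes exactly this compatibility for the rubber terms.

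The main obstacle, and the only genuinely new input beyond~\cite{OP19}, is the interaction of the degeneration with the K3-specific terms. Since $\bP^{\rel,\bullet}$ contains no $\langle\cdot,F\rangle$ or $\sigma$ term, I must check that the modified insertions never contribute from the $\BP^1\times E$ side. This follows from the definition of $\sigma$, which is supported on $\BQ\langle W\rangle\oplus U^\perp$, together with the fact that the lifts of $W$ and of $U^\perp$ restrict to the bubble so that the corresponding contributions vanish, and with the fact that $\BP^1\times E$ carries the ordinary rather than the reduced virtual class. Assembling these matchings term by term, and collecting the rubber contributions on both sides, yields the claimed identity and hence the compatibility.
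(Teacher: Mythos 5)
Your top-level architecture (compute $\dc\bF_{g,m}$ in two ways and compare term by term) agrees with the paper's, but your accounting of the terms is inconsistent with the definitions, and it omits the one point where the real work happens. Both $\bH^{\rel}$ and $\bP^{\rel,\bullet}$ contain relative cotangent line terms, namely $-2\sum_i(\ldots\mid(\mu_1,\delta_1),\ldots,(\mu_i,\psi_i^{\rel}\pi^*\pi_*\delta_i),\ldots)$; your proposal never mentions them. These terms appear in your computation (i) (Leibniz rule applied to the degeneration formula) but have no counterpart in computation (ii), so they must cancel against something. The paper's proof shows that they cancel against the \emph{rubber} terms, cross-matched: (third term of $\bH^{\rel}$)$\,\cdot\,\bG^{\rel,\bullet}$ cancels $\bF^{\rel}\,\cdot\,$(fourth term of $\bP^{\rel,\bullet}$), and (fifth term of $\bH^{\rel}$)$\,\cdot\,\bG^{\rel,\bullet}$ cancels $\bF^{\rel}\,\cdot\,$(second term of $\bP^{\rel,\bullet}$). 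The mechanism is the comparison of $\psi_i^{\rel}$ with the cotangent class pulled back from the stack of target degenerations (\cite[Lemma 22]{OP19}): their difference is a boundary class creating exactly the rubber configurations. You instead pair rubber with rubber, interpreting both $\bG^{\sim,\bullet}$-terms as the ``node landing on $E$'' contributions of computation (ii); this leaves the $\psi^{\rel}$-terms orphaned, so your bookkeeping cannot close. (Note that the reference you invoke for the bookkeeping, Section 4.6 of \cite{OP19}, establishes precisely the crosswise rubber--$\psi^{\rel}$ cancellation, not the matching you describe.)

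Second, you present the identity as ``a formal consequence of the degeneration formula and the definitions,'' but the argument of \cite[Proposition 21]{OP19} is written for virtual classes, and the reduced class does not satisfy the standard splitting axiom: under restriction to a boundary divisor or to the special fiber it splits as reduced times virtual, never reduced times reduced. This is the genuinely new difficulty relative to \cite{OP19}, and the paper resolves it by introducing a nilpotent variable $\varepsilon$ with $\varepsilon^2=0$ and working with the class $[\mgn(S,\beta)]^{vir}+\varepsilon\,[\mgn(S,\beta)]^{red}$ (and its analogue for $S/E$), which does obey the usual splitting behavior; one runs the proof of \cite{OP19} for this class and extracts the $\varepsilon$-coefficient at the end. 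Without this or an equivalent device, the term-by-term degenerations in your computation (ii) are not justified at the level of the classes involved. Your remark that the K3-specific terms (the $\tfrac{20}{m}$-term and the $\sigma$-term) must not contribute from the bubble side is correct in spirit --- it follows from the choice of lifts, e.g.\ the lift of $F$ restricts to zero on $\BP^1\times E$ --- but not for the reason you give: it has nothing to do with the bubble carrying the ordinary rather than the reduced class.
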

\begin{proof} The proof given in \cite[Proposition 21]{OP19} treats virtual fundamental classes, not reduced classes. The splitting behavior of the reduced class with respect to restriction to boundary divisors~\cite[Section 7.3]{MPT10} calls for a slight adaptation of the proof. For this, we introduce a formal variable $\varepsilon$ with $\varepsilon^2 = 0$. We can then interpret reduced Gromov--Witten invariants of the K3 surface as integrals against the class\footnote{We thank G.\ Oberdieck for pointing this out.}
\[ [\mgn(S,\beta)]^{vir} + \varepsilon \,[\mgn(S,\beta)]^{red}\]
followed by taking the $[\varepsilon]$-coefficient. We consider a similar class for $S/E$. This class has the advantage of satisfying the usual splitting behavior of virtual fundamental classes.  Thus, for this class one can follow the proof of compatibility given in~\cite[Proposition 21]{OP19}. All the terms appearing in the computation~(ii) also appear in computation~(i). We are left with proving the cancellation of the remaining terms in~(i). This follows from comparing $\psi^{\rel}_i$-class and the $\psi$-class pulled-back from the stack of target degeneration~\cite[Lemma 22]{OP19}. In particular, we match the following terms: the third term of $\bH^{\rel}$ times $\bG^{\rel,\bullet}$ with the fourth term of $\bF^{\rel}$ times $\bP^{\rel,\bullet}$; and analogously for the fifth term of $\bH^{\rel}$ times $\bG^{\rel,\bullet}$ with the second term of $\bF^{\rel}$ times $\bP^{\rel,\bullet}$.
\end{proof}
The main advantage of the holomorphic anomaly equation is that it is compatible with the degeneration formula. Thus, the genus reduction from the degeneration formula connects the low genus results with arbitrary genus predictions. On the other hand, it is not even clear to say what should be the compatibility of the multiple cover formula and the degeneration formula.

\section{Tautological relations and initial condition}\label{sec:InitialCondition}

This section contains a proof of the multiple cover formula in genus~$0$ and genus~$1$ for any divisibility~$m$. It is a direct consequence of the KKV formula. However, as initial condition for our induction we also require a special case in genus~$2$, which cannot be easily deduced from the KKV formula. We treat this descendent potential separately, using double ramification relations~\cite{BHPSS20} for K3 surfaces. This approach is likely to give \textit{relations in any genus} and will be pursued in the future.


\subsection{Double ramification relations}\label{sec:DRrelation}
In this section we recall {\em double ramification relations with target variety} developed in~\cite{B20,BHPSS20}.

Let $\pic$ be the Picard stack for the universal curve over the stack of prestable curves $\mathfrak{M}_{g,n}$ of genus $g$ with $n$ markings. Let \begin{equation}\label{eq:tautStrpic}
    \pi\colon \mathfrak{C}\to \pic, \; s_i\colon \pic\to \mathfrak{C}, \; \mathfrak{L}\to \mathfrak{C}, \; \omega_\pi \to \mathfrak{C}
\end{equation} 
be the universal curve, the $i$-th section, the universal line bundle and the relative dualizing sheaf of $\pi$. The following operational Chow classes on $\pic$ are obtained from the universal structure \eqref{eq:tautStrpic}:
\begin{itemize}
    \item $\psi_i = c_1(s_i^* \omega_\pi)\in A^1_{\mathsf{op}}(\pic)\, $,
    \item $\xi_i = c_1(s_i^* \mathfrak{L})\in A^1_{\mathsf{op}}(\pic)
    \, $,
    \item $\eta= \pi_*\left(c_1(\mathfrak{L})^2\right)
    \in A^{1}_{\mathsf{op}}(\pic)
    \, $.
\end{itemize}

Let $A = (a_1,\ldots,a_n) \in \mathbb{Z}^n$ be a vector of integers satisfying 
\begin{equation}\label{eq:drAlinear}
    \sum_{i} a_i =d\,,
\end{equation}
where $d$ is the degree of the line bundle. We denote by $P_{g,A,d}^{c,r}$ the codimension $c$ component of the class 
\begin{multline*}
\hspace{-10pt}\sum_{
\substack{\Gamma\in \mathsf{G}_{g,n,d} \\
w\in \mathsf{W}_{\Gamma,r}}
}
\frac{r^{-h^1(\Gamma_\delta)}}{|\Aut(\Gamma_\delta)|}
\;
j_{\Gamma*}\Bigg[
\prod_{i=1}^n \exp\left(\frac12 a_i^2 \psi_i + a_i \xi_i \right)
\prod_{v \in V(\Gamma_\delta)} \exp\left(-\frac12 \eta(v) \right)
\\ \hspace{+10pt}
\prod_{e=(h,h')\in E(\Gamma)}
\frac{1-\exp\left(-\frac{w(h)w(h')}2(\psi_h+\psi_{h'})\right)}{\psi_h + \psi_{h'}} \Bigg]\, .
\end{multline*} 
We refer to~\cite{BHPSS20} for details about the notations. This expression is polynomial in $r$ when $r$ is sufficiently large. Let $P^c_{g,A,d}$ be the constant part of $P^{c,r}_{g,A,d}$. 
\begin{theorem} (\cite[Theorem 8]{BHPSS20})\label{thm:picDRrelation}
$P^c_{g,A,d} = 0$ for all $c>g$ in $A^c_{\mathsf{op}}(\pic)$.
\end{theorem}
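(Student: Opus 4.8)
The plan is to identify the total class $\sum_c P^{c,r}_{g,A,d}$ with a Chern class of a derived pushforward on an auxiliary stack of $r$-th roots, and then to read off the vanishing in codimension $>g$ from the geometry of the universal Abel--Jacobi section, upgrading this to a statement about the constant-in-$r$ part via polynomiality.

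First I would recognize the three families of factors in the definition of $P^{c,r}_{g,A,d}$ --- the marking terms $\exp(\tfrac12 a_i^2\psi_i + a_i\xi_i)$, the vertex terms $\exp(-\tfrac12\eta(v))$, and the edge terms $\frac{1-\exp(-\frac{w(h)w(h')}2(\psi_h+\psi_{h'}))}{\psi_h+\psi_{h'}}$ --- as exactly the contributions appearing in Chiodo's Grothendieck--Riemann--Roch formula for the total Chern class $c\big(-R\pi_*\mathfrak{L}^{(r)}\big)$, where $\mathfrak{L}^{(r)}$ is a universal $r$-th root of $\mathfrak{L}\otimes\mathcal{O}(-\sum_i a_i x_i)$ on the stack of $r$-th roots $\mathfrak{Pic}^{(r)}_{g,n}\to\mathfrak{Pic}_{g,n}$ (here $x_i$ is the image of the section $s_i$). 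The weighting-mod-$r$ sum over graphs and the factors $r^{-h^1(\Gamma_\delta)}/|\Aut(\Gamma_\delta)|$ are precisely the combinatorial shadow of the pushforward $\epsilon_{r*}$ along the finite forgetful map that forgets the root. Thus, for $r$ sufficiently large, I expect an identity of operational classes on $\mathfrak{Pic}_{g,n}$,
\[ \sum_c P^{c,r}_{g,A,d} \;=\; \epsilon_{r*}\, c\big(-R\pi_*\mathfrak{L}^{(r)}\big)\,. \]

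Second, I would exploit the Abel--Jacobi geometry on $\mathfrak{Pic}_{g,n}$. Because $\sum_i a_i = d = \deg\mathfrak{L}$ by \eqref{eq:drAlinear}, the line bundle $\mathfrak{L}\otimes\mathcal{O}(-\sum_i a_i x_i)$ has relative degree zero and therefore defines a section $\sigma_A$ of the universal degree-zero Picard stack $\mathrm{Jac}^0\to\mathfrak{Pic}_{g,n}$, whose fibres are $g$-dimensional. The double ramification locus is the preimage $\sigma_A^{-1}(e)$ of the identity section $e$. The content of Chiodo's computation, in the $r\to\infty$ limit, is that $\epsilon_{r*}\,c(-R\pi_*\mathfrak{L}^{(r)})$ computes (a polynomial deformation of) the Gysin pullback $\sigma_A^![e]$; since $e\subset\mathrm{Jac}^0$ has codimension exactly $g$, equal to the relative dimension, this class is concentrated in codimensions $\le g$, its codimension-$g$ part recovering $2^g\cdot\mathrm{DR}$, with no room left in codimension $>g$.

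The main obstacle is to make this rigorous at the level of the \emph{constant term in $r$}, since $P^{c}_{g,A,d}$ is extracted as the $r$-independent part of the polynomial $P^{c,r}_{g,A,d}$ rather than as a value at finite $r$. The hard part will be to show that, for each fixed $c>g$, the polynomial $P^{c,r}_{g,A,d}$ vanishes identically in $r$, not merely that the geometric class vanishes for individual large $r$. I would handle this exactly as in the JPPZ and Clader--Janda vanishing on $\overline{\mathcal{M}}_{g,n}$: realize $P^{c,r}_{g,A,d}$ as a fixed-point localization contribution on a moduli space of rubber maps to $\mathbb{P}^1$ relative to $0,\infty$, where a dimension count on the relevant fixed loci bounds the codimension of every contribution by $g$. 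Polynomiality of $\epsilon_{r*}$ in $r$ --- which carries over from the $\overline{\mathcal{M}}_{g,n}$ setting since the $\xi$- and $\eta$-insertions do not affect the degree-in-$r$ bookkeeping --- then upgrades the vanishing of individual codimension-$(>g)$ terms to the vanishing of the whole polynomial, hence of its constant part $P^{c}_{g,A,d}$. The one subtlety specific to $\mathfrak{Pic}_{g,n}$, as opposed to $\overline{\mathcal{M}}_{g,n}$, is that the operational classes $\xi_i,\eta$ must be tracked through the localization; but as they are pulled back from the universal Jacobian and are compatible with the section $\sigma_A$, they only shift which universal class is pushed forward and do not enlarge the codimension, leaving the dimension bound intact.
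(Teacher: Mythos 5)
The paper itself does not prove this statement at all: it is imported wholesale from \cite[Theorem 8]{BHPSS20}, so your proposal has to be measured against the proof in that reference. Your main ingredients do match the strategy there --- Chiodo's Grothendieck--Riemann--Roch formula on the stack of $r$-th roots over $\pic$ identifying $P^{c,r}_{g,A,d}$ with (a normalization of) $\epsilon_{r*}$ of Chern classes of $-R\pi_*\mathfrak{L}^{(r)}$, followed by $\BC^*$-localization on moduli of stable maps to rubber over $\pic$ and extraction of the $r$-constant term, generalizing the Clader--Janda and JPPZ arguments while carrying the classes $\xi_i$, $\eta$ through the computation.

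However, there is a genuine error in the way you organize the final step, and it is fatal as stated. You assert that the "hard part" is to show that for fixed $c>g$ the \emph{polynomial} $P^{c,r}_{g,A,d}$ vanishes identically in $r$, and your conclusion is deduced by "upgrading" pointwise vanishing to vanishing of the whole polynomial, "hence of its constant part." That intermediate statement is false, so no correct localization argument can ever establish it. Already in the simplest case one can check: take $g=0$, $n=4$, $d=0$, $A=(1,1,-1,-1)$, and pull back along the section of $\pic$ given by $(C,\CO_C)$, so that $P^{c,r}_{g,A,d}$ restricts to Pixton's class on $\Mbar_{0,4}$. The trivial graph contributes $\frac12\sum_i a_i^2\psi_i = 2[\mathrm{pt}]$, the boundary graph separating $\{1,2\}$ from $\{3,4\}$ carries the unique mod-$r$ weighting $w(h)=r-2$, $w(h')=2$ and contributes $\frac12\,2(r-2)[\mathrm{pt}]=(r-2)[\mathrm{pt}]$, and the other two boundary graphs have edge weight $0$; hence
\begin{equation*}
P^{1,r}_{0,A,0} \;=\; r\,[\mathrm{pt}] \;\neq\; 0\,,
\end{equation*}
a nonzero polynomial in $r$ whose constant term vanishes. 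This is exactly the general picture: for $c>g$ the class $P^{c,r}$ survives as an $r$-dependent (boundary-supported) class, and \emph{only} its constant term is zero. Accordingly, the actual proof in \cite{BHPSS20} (as in Clader--Janda) does not show vanishing at any finite $r$; the localization relations express $P^{c,r}$ in terms of boundary corrections with explicit $r$-dependence, and the theorem is obtained by isolating the $r$-constant part through an induction. Relatedly, your Abel--Jacobi step conflates the conclusion with the input: the identification of the constant term in codimension $g$ with $2^g$ times the (operational, log-resolved) double ramification cycle is the companion theorem, not something that forces concentration in codimension $\leq g$ --- the section $\sigma_A$ does not even extend over the boundary of $\pic$ without the blow-up/b-Chow formalism, and the full class $\epsilon_{r*}\,c(-R\pi_*\mathfrak{L}^{(r)})$ is not equal to a Gysin pullback $\sigma_A^![e]$ in any codimension-bounded sense. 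So the architecture of your argument must be reorganized around constant-term extraction before it can be completed.
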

After restricting  $P^c_{g,A,d}$ to (\ref{eq:drAlinear}), this expression is a polynomial in $a_1,\ldots,a_{n-1}$. The polynomiality will be used to get refined relations.

Let $L$ be a line bundle on $S$ with degree \[\int_\beta c_1(L)=d\,.\] The choice of a line bundle $L$ induces a morphism
\[\varphi_L \colon \mgn(S,\beta) \to \pic, \,\, [f\colon C \to S] \mapsto (C,f^*L)\,.\]
Then Theorem~\ref{thm:picDRrelation} gives relations
\begin{equation}\label{eq:DR}
    P^c_{g,A,d}(L)=\varphi_{L}^*P^c_{g,A,d}\cap [\mgn(S,\beta)]^{red} =0 \text{ for all } c>g
\end{equation}
in $A_{g+n-c}\big(\mgn(S,\beta)\big)$. 
\subsection{Compatibility II} \label{sec:compatibilityII}
The relations among descendent potentials coming from tautological relations on $\mgn(S,\beta)$ are compatible with the multiple cover formula. This follows from two observations. Firstly, the splitting behavior of the reduced class, discussed in Section~\ref{sec:compatibilityI}, is crucial. It is already crucial to justify compatibility with respect to boundary restriction for tautological classes pulled back from $\mgn$. For tautological relations on $\mgn(S,\beta)$, a second fact, which we want to explain below, is essential for the compatibility. 

For $c>g>0$, $A\in \BZ^n$ and $b\in \BZ$, consider the series of relations \[P^c_{g,bA,db}(L^{\otimes b})=0\] obtained by tensoring the line bundle $L$ by $b$ times. For each coefficient of a monomial in $a_i$-variables, this expression is polynomial in $b$ and hence each of $b$-variable is a relation.
As a consequence, each term of a relation $P^c_{g,A,m}(F)$ gives the same value of
\[m^{\deg - \degm} \,, \]
where $\deg(\xi)= 1$ and $\degm(\xi) = 0$, as in Definition~\ref{def:degm}. The same holds true with the roles of $F$ and $W$ interchanged. Thus, the relations are compatible with the operator
\[ m^{\deg - \degm} \bT_{m,2g-2+\degm}\,, \]
which gives the multiple cover formula in Conjecture~\ref{conj:mcf}.
\subsection{Initial condition}\label{sec:RevisitKKV}
The Katz--Klemm--Vafa (KKV) formula implies that the generating series of $\lambda_g$-integrals $$\bF_{g,m}\big(\lambda_g; \emptyset \big)$$ satisfy the multiple cover formula \cite{PT16}. Here, $\lambda_g = c_g(\BE_g)$ is the top Chern class of the rank $g$ Hodge bundle $\BE_g$ on $\Mbar_g(S,\beta)$. The KKV formula will be the starting point of our genus induction. 

The class $\lambda_g$ is a tautological class by the Grothendieck--Riemann--Roch computation (\cite{FP05}) but the formula is rather complicated. Instead we use an alternative expression of $\lambda_g$ in terms of double ramification cycle, proven in \cite{JPPZ17}. We recall that the class $(-1)^g\lambda_g$ is equal to the double ramification cycle $\dr_g(\emptyset)$ with the empty condition. By \cite[Theorem 1]{JPPZ17} the class $\dr_g(\emptyset)$ can be written as a graph sum of tautological classes {\em without} $\kappa$-classes. 
\begin{proposition}\label{prop:initial}The multiple cover formula holds in genus~$0$ and genus~$1$ for all $m\geq 1$.
\end{proposition}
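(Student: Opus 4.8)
The plan is to transport the multiple cover formula from the $\lambda_g$-integrals, where the KKV formula \cite{PT16} provides it in every genus, to all genus-$0$ and genus-$1$ descendent potentials. The two structural inputs that make this possible are already in place: the splitting behaviour of the reduced class (Section~\ref{sec:compatibilityI}), which controls the boundary terms, and the compatibility of tautological relations on $\mgn(S,\beta)$ with the operator $m^{\deg-\degm}\bT_{m,\ell}$ (Section~\ref{sec:compatibilityII}). I would organise the argument as a double induction: an outer induction on the number of markings $n$, and an inner descending induction on the complexity of the insertions, measured by $\sum_i a_i$ together with the number of point insertions. The base of the induction is $\bF_{0,m}(\emptyset)=\bF_{0,m}(\lambda_0;\emptyset)$ for $g=0$ (using $\lambda_0=\one$) and $\bF_{1,m}(\lambda_1;\emptyset)$ for $g=1$; both satisfy the multiple cover formula by KKV.

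Before invoking tautological relations I would clear away the insertions controlled by the standard string, dilaton and divisor equations. The string and dilaton equations concern $\tau_0(\one)$ and $\tau_1(\one)$, whose classes have $\deg=\degm=0$, so they alter neither the weight $\ell=2g-2+\degm$ nor the prefactor $m^{\deg-\degm}$ and are therefore trivially compatible with Conjecture~\ref{conj:mcf}. A divisor insertion $\tau_0(\gamma)$ with $\gamma\in H^2(S)$ is removed through the operator $\frac{d}{d\gamma}=\langle\gamma,F\rangle\Dq+m\langle\gamma,W\rangle$; here $\langle\gamma,F\rangle$ leaves $\degm$ unchanged and multiplies the prefactor by $m$, while $\langle\gamma,W\rangle$ raises $\degm$ by $2$ and lowers the prefactor, and the commutator $\bT_{m,\ell+2}\Dq=m\,\Dq\,\bT_{m,\ell}$ of Lemma~\ref{lem:operators}(iv) shows that both are compatible with the Hecke operator. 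After these reductions only point insertions and powers of $\psi$ survive.

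The remaining insertions are handled by the double ramification relations of Theorem~\ref{thm:picDRrelation}. A line bundle $L$ on $S$ with $\int_\beta c_1(L)=d$ yields, through $\varphi_L$, the relations $P^c_{g,A,d}(L)=0$ of \eqref{eq:DR} for every $c>g$; the first non-trivial codimension is $c=1$ in genus $0$ and $c=2$ in genus $1$. The codimension-$g$ part of these classes is the double ramification cycle $\dr_g(\emptyset)=(-1)^g\lambda_g$ of \cite{JPPZ17}, so the codimension-$(g+1)$ vanishing relations are precisely what connect descendent invariants to the KKV $\lambda_g$-integrals. Choosing $c_1(L)\in\{F,W\}$ identifies $\xi_i=\ev_i^*c_1(L)$ with a divisor class and makes $\eta=\pi_*(c_1(\mathfrak L)^2)$ carry the point class $c_1(L)^2\in H^4(S)$. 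Integrating the vanishing relation against $[\mgn(S,\beta)]^{red}$ and exploiting the polynomial dependence on the vector $A$ (and on the tensor power $b$ of $L$, as in Section~\ref{sec:compatibilityII}) produces enough relations to express the top $\psi$-power and the point insertions in terms of strictly lower complexity together with boundary contributions. By Section~\ref{sec:compatibilityI} each boundary term restricts through the reduced class to a genus-reduced or marking-reduced potential with a diagonal insertion, where exactly one factor carries the reduced class and the other contributes a number; in genus $0$ this always lowers $n$ or reaches $\bF_{0,m}(\emptyset)$, and in genus $1$ it either lowers $n$ or drops the genus to $0$ via $\bF_{1,m}(\alpha;\gamma)=\bF_{0,m}(\alpha;\gamma\,\Delta_S)$, where $(\deg-\degm)(\Delta_S)=0$ and $\degm(\Delta_S)=2$ exactly offset the genus drop. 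Since every step preserves $m^{\deg-\degm}\bT_{m,\ell}$, the recursion terminates at the KKV base cases with the multiple cover formula intact.

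The step I expect to be the main obstacle is the verification that these double ramification relations, once the divisor and string–dilaton reductions are spent, genuinely suffice to eliminate all point insertions and top $\psi$-powers and to connect them to the $\lambda_g$-integrals, that is, that the inner induction closes rather than merely trading one unmanageable descendent for another. Controlling this requires a careful reading of the codimension-$(g+1)$ part of $P^c_{g,A,d}$ and precise bookkeeping of $\deg$ and $\degm$ so that the weight $\ell=2g-2+\degm$ of the Hecke operator transforms correctly across each relation; in genus $0$ and $1$ this bookkeeping is comparatively light, which is why these cases follow fairly directly from KKV, whereas the analogous genus-$2$ input genuinely demands the full strength of the relations.
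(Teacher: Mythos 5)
Your genus-$0$ argument and your reduction of genus-$1$ potentials down to a terminal case are essentially fine (the paper does this more cheaply: for $g\leq 1$ the ring $R^*(\Mbar_{g,n})$ is additively generated by boundary strata, so the splitting property of the reduced class plus the divisor equation and the dimension constraint reduce everything to $\bF_{0,m}(\emptyset)$ and $\bF_{1,m}(\tau_0(\pt))$ — no double ramification relations are needed in this proposition at all). The fatal gap is your genus-$1$ base case. On $\Mbar_{1,1}$ one has $\lambda_1=\frac{1}{24}[\delta_0]$, a pure boundary class, so by the splitting property $\bF_{1,m}(\lambda_1;\emptyset)$ collapses to genus-$0$ data; the genus-$1$ KKV formula is therefore vacuous as an anchor and contains no genuinely genus-$1$ information. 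Meanwhile your inner induction does not close: it terminates at $\bF_{1,m}(\tau_0(\pt))$, where the dimension constraint is saturated ($\alpha=1$, a single point insertion, no $\psi$-classes), so there is nothing left for a DR relation to act on; and the boundary strata you meet along the way include rational tails, which keep the genus equal to $1$, so your claimed dichotomy \emph{``lowers $n$ or drops the genus via $\Delta_S$''} is false as stated. Worse, no tautological relation can rescue this terminal case: relations integrated against the reduced class only produce \emph{linear} relations among potentials (the splitting property forbids nontrivial products of series), and in the primitive case the two weight-homogeneous quantities in play,
\begin{equation*}
\bF_{1,1}\big(\tau_0(\pt)\big)=\frac{\Dq C_2}{\Delta}=\frac{E_4-E_2^2}{288\,\Delta}
\qquad\text{and}\qquad
\Dq^2\bF_{0,1}\big(\emptyset\big)=\frac{11E_2^2+E_4}{12\,\Delta}\,,
\end{equation*}
are linearly independent, so no linear relation expresses the genus-$1$ point invariant in terms of genus-$0$ data. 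Any relation useful for your scheme would have to degenerate exactly at $m=1$, and even then you would still need to compare the output against $\bT_{m,\ell}\,\bF_{1,1}(\tau_0(\pt))$ — an identity nothing in your setup provides.

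The missing idea, and the paper's actual mechanism, is to go one genus \emph{up}: the genus-$2$ KKV formula gives the multiple cover formula for $\bF_{2,m}(\lambda_2;\emptyset)$, and the boundary expression of $\lambda_2$ on $\Mbar_2$ yields
\begin{equation*}
\bF_{2,m}\big(\lambda_2;\emptyset\big)=\frac{1}{10}\,\bF_{1,m}\big(\tau_0(\pt)\big)+\frac{1}{60}\,\Dq^2\bF_{0,m}\big(\emptyset\big)\,,
\end{equation*}
which, together with the Yau--Zaslow genus-$0$ case, anchors $\bF_{1,m}(\tau_0(\pt))$. The reason this works is precisely the phenomenon your dichotomy misses: the boundary expression of $\lambda_2$ contains a rational-tail stratum whose genus-$1$ vertex survives with genus intact, producing $\tau_0(\pt)$ (via $\psi_1=\frac{1}{24}[\delta_0]+[\delta_{12}]$ on $\Mbar_{1,2}$ and $\int_S e(S)=24$). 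In short: KKV in genus $g$ reaches genuinely genus-$(g-1)$ descendent data, never genus-$g$ data, so the genus-$1$ statement needs genus-$2$ KKV as input — this is what your proposal lacks.
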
 
\begin{proof}
When $g=0,1$, the tautological ring $R^*(\Mbar_{g,n})$ is additively generated by boundary strata (\cite{Keel92,Petersen14}). Thus, one can replace descendents $\alpha\in R^*(\Mbar_{g,n})$ by classes in $H^*(S)$. By the divisor equation and the dimension constraint, we can reduce to the case $\bF_{0,m}\big(\emptyset\big)$ and $\bF_{1,m}(\tau_0(\pt))$. The genus~$0$ case is covered by the full Yau--Zaslow formula~\cite{KMPS10,PT16}. The genus~$1$ case follows from the genus~$2$ KKV formula. Using the boundary expression of $\lambda_2$ on $\Mbar_2$, we have
   \begin{align*}
        \bF_{2,m}\big(\lambda_2;\emptyset \big)&= \frac{1}{240}\bF_{1,m}\big(\psi_1;\Delta_S\big)+\frac{1}{1152}\bF_{0,m}\big(\,;\Delta_S,\Delta_S\big)\\
        &=\frac{1}{10}\bF_{1,m}\big(\tau_0(\pt)\big)+\frac{1}{60}\Dq^2\bF_{0,m}\big(\emptyset\big)\,,
    \end{align*}
    where $\Delta_S\subset S\times S$ is the diagonal class. Therefore, $\bF_{1,m}\big(\tau_0(\pt)\big)$ satisfies Conjecture~\ref{conj:mcf}.
\end{proof}
In the argument below, we will use tautological relations on $\Mbar_{g,n}$ which are recently obtained by $r$-spin relations. For convenience, we summarize the result.
\begin{proposition}(\cite{KLLS18}) Let $g\geq 2$ and $n\geq 1$. Consider tautological classes on $\Mbar_{g,n}$.
\begin{enumerate}[itemsep=6pt, label=(\roman*)]\label{prop:trr}
    \item  (Topological Recursion Relations) Any monomial of $\psi$-classes of degree at least $g$ can be represented by a  tautological class supported on boundary strata without $\kappa$-classes.
    \item Any tautological class of degree $g-1$ can represented by a sum of a linear combination of $\psi_1^{g-1}, \ldots, \psi_n^{g-1}$ and a tautological class supported on boundary strata.
\end{enumerate} 
\end{proposition}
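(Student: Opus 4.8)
The proposition records two structural properties of $R^*(\Mbar_{g,n})$ proved in \cite{KLLS18} through the $r$-spin (equivalently DR) relations, and I describe the route I would take. Both assertions are really statements about the tautological ring of the open locus $M_{g,n}$ of smooth curves, since ``supported on boundary strata'' means ``lying in the boundary ideal,'' i.e.\ vanishing after restriction to $M_{g,n}$, where only $\psi$- and $\kappa$-classes survive. The source of relations is Theorem~\ref{thm:picDRrelation}: pulling back the classes $P^c_{g,A,d}$ from $\pic$ along the trivial line bundle (so that $d=0$ and $\xi_i=\eta=0$) and treating the entries of $A$ (subject to $\sum_i a_i=0$) as formal parameters, the vanishing $P^c_{g,A,d}=0$ for $c>g$ becomes a family of tautological relations on $\Mbar_{g,n}$ indexed by the monomials in the $a_i$. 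The decisive feature is that, by the shape of $P^{c,r}_{g,A,d}$, after this specialization the interior contribution comes only from $\prod_i\exp(\tfrac12 a_i^2\psi_i)$ while all remaining terms are boundary pushforwards $j_{\Gamma*}$ of $\psi$-polynomials; hence the relations are \emph{free of} $\kappa$-classes and express pure $\psi$-monomials through boundary classes.

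For part (i), extracting the codimension-$c$ coefficient with $c>g$ directly exhibits every pure $\psi$-monomial of degree $>g$ as a $\kappa$-free boundary class; the borderline degree $c=g$ is the classical Getzler--Ionel vanishing, which the same circle of ideas reproves. To pass from pure $\psi$-monomials to arbitrary monomials in $\psi$ and $\kappa$, I would use the standard conversion $\kappa_a=\pi_*(\psi_{n+1}^{a+1})$ together with the comparison $\pi^*\psi_i=\psi_i-D_{i,n+1}$ under the forgetful map $\pi\colon\Mbar_{g,n+1}\to\Mbar_{g,n}$ (with $D_{i,n+1}$ the divisor where the $i$-th and $(n{+}1)$-st points collide). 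Modulo boundary, every $\psi$--$\kappa$ monomial becomes a forgetful pushforward of a pure $\psi$-monomial on a space with more markings; applying the $\kappa$-free relation there and pushing forward produces a boundary representative still free of $\kappa$-classes.

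For part (ii) the relevant degree is $g-1$, one below the vanishing range, so a reduction rather than a vanishing is needed. After the same $\kappa$-to-$\psi$ reduction, the task is to write a mixed pure $\psi$-monomial $\psi_1^{a_1}\cdots\psi_n^{a_n}$ of degree $g-1$ in terms of the distinguished powers $\psi_i^{g-1}$ modulo boundary. I would obtain this from the degree-$(g-1)$ $r$-spin relations, which are nontrivial in this range: expanded in the $a_i$, each such relation rewrites a mixed monomial as a linear combination of the $\psi_i^{g-1}$ plus boundary corrections, and iterating over a monomial basis collapses $R^{g-1}(\Mbar_{g,n})$ modulo boundary onto the span of $\psi_1^{g-1},\ldots,\psi_n^{g-1}$.

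The main obstacle is the combinatorial bookkeeping that turns the intricate relations $P^{c,r}_{g,A,d}$ into these two clean statements: one must isolate the constant term in $r$, keep the interior term separated from the boundary pushforwards at each codimension, and verify that both the elimination of $\kappa$-classes and the reduction to the basis $\{\psi_i^{g-1}\}$ close up without leaving residual interior contributions. This control is exactly what is carried out in \cite{KLLS18}, building on \cite{JPPZ17} and \cite{BHPSS20}.
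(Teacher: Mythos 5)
Your proposal follows the same circle of ideas as the sources the paper cites (Pixton-type tautological relations), and its broad mechanism is sound, but as written it has a genuine gap located exactly at the borderline cases for which the paper needs Proposition~\ref{prop:trr}. The DR relations of Theorem~\ref{thm:picDRrelation}, restricted to the trivial line bundle, vanish only in codimension $c>g$, so your coefficient extraction produces $\kappa$-free boundary expressions only for $\psi$-monomials of degree \emph{strictly greater} than $g$. For degree exactly $g$ --- which part (i) includes, and which is precisely what the paper uses (e.g.\ the class $(\psi_1+\psi_2)^g$ on $\Mbar_{g,2}$ in Proposition~\ref{prop:initialfromKKV}, and $\deg(\psi)\geq 2$ on $\Mbar_{2,n}$ in Proposition~\ref{prop:genustwo}) --- you fall back on ``classical Getzler--Ionel vanishing,'' but that result only gives a boundary expression, possibly decorated with $\kappa$-classes; the $\kappa$-freeness is the entire content of (i) beyond Getzler--Ionel and cannot be recovered from relations that are empty in codimension $g$. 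Likewise, in part (ii) the DR relations are vacuous in degree $g-1$, and you correctly switch to the PPZ $r$-spin relations --- but the assertion that iterating these relations ``collapses $R^{g-1}(\Mbar_{g,n})$ modulo boundary onto the span of $\psi_1^{g-1},\ldots,\psi_n^{g-1}$'' is precisely the theorem of \cite{BSZ16} reproved in \cite[Proposition 3.1]{KLLS18}; at that point your argument is a citation, not a proof. This is in fact how the paper itself proceeds: (i) is quoted from the proof of \cite[Lemma 5.2]{KLLS18} (see also \cite{CJWZ}), where the $r$-spin relations --- which, unlike the DR relations, exist in degrees well below $g$ --- supply exactly the borderline cases your argument misses.

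A second, more conceptual issue is your opening reduction: ``supported on boundary strata'' is strictly stronger than ``vanishing after restriction to $M_{g,n}$.'' If a tautological class restricts to zero on $M_{g,n}$, excision only exhibits it as the pushforward of \emph{some} Chow class from the boundary, not of a \emph{tautological} one; exactness of $R^*(\partial\Mbar_{g,n})\to R^*(\Mbar_{g,n})\to R^*(M_{g,n})\to 0$ in the middle is not formal. This is exactly why the paper's proof of (ii) adds the sentence that the relations used are themselves tautological, so that the resulting boundary expression is tautological. Your mechanism of explicit relations (whose boundary terms are manifestly tautological graph sums) would repair this, but the reduction as you framed it only yields the weaker statement that the relevant classes die in $R^*(M_{g,n})$, which is not what the proposition asserts and not what the induction in Section~\ref{sec:ProofQmodHAE} requires.
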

\begin{proof}
The proof of (i) follows from the proof of \cite[Lemma 5.2]{KLLS18} (see also \cite[page 3]{CJWZ}). By \cite[Proposition 3.1]{KLLS18} (or  \cite[Theorem 1.1]{BSZ16}) the degree $g-1$ part $R^{g-1}(\mathcal{M}_{g,n})$ is spanned by $\psi_1^{g-1}, \ldots, \psi_n^{g-1}$. Since relations used in the proof are all tautological, the boundary expression is tautological and thus we obtain (ii).
\end{proof}
Together with the boundary expression for $\lambda_{g+1}$ we obtain the following more general consequence of the KKV formula:
\begin{proposition}\label{prop:initialfromKKV}
Let $m\geq 1$ and $g\geq 1$. Assume the multiple cover formula Conjecture~\ref{conj:mcf} holds for $m$ and all descendents of genus~$< g$. Then Conjecture~\ref{conj:mcf} holds for 
\[ \bF_{g,m}\big( \tau_{g-1}(\pt)\big) \,. \]
\end{proposition}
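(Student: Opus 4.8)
The plan is to realize $\bF_{g,m}\big(\tau_{g-1}(\pt)\big)$ as the leading boundary contribution of the $\lambda_{g+1}$-potential, generalizing the genus-$1$ computation in the proof of Proposition~\ref{prop:initial}. First I would invoke the KKV formula in the form proved in \cite{PT16}: the series $\bF_{g+1,m}\big(\lambda_{g+1};\emptyset\big)$ already satisfies the multiple cover formula of Conjecture~\ref{conj:mcf} (note $g+1\geq 2$). Then, following \cite{JPPZ17}, I would substitute $\lambda_{g+1}=(-1)^{g+1}\dr_{g+1}(\emptyset)$ and use Pixton's boundary formula, which writes this class as a sum over stable graphs of genus $g+1$ of $\kappa$-free tautological classes $j_{\Gamma*}(\text{poly in }\psi)$, each supported on the boundary of $\overline{M}_{g+1}$.

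Next I would integrate $\pi^*\lambda_{g+1}$ against $[\overline{M}_{g+1}(S,\beta)]^{\mathrm{red}}$ and move the boundary classes off using the splitting property of the reduced class (\cite[Section 7.3]{MPT10}, Section~\ref{sec:compatibilityI}): on each stratum the reduced class sits on the unique component carrying the nonzero curve class, while the complementary component carries the ordinary virtual class, which is a number and is nonzero only for a $\beta=0$ component of genus $\leq 1$. This organizes the contributions by the genus of the reduced factor. Every term whose reduced factor has genus $<g$ obeys the multiple cover formula by the induction hypothesis, and by the bookkeeping of Section~\ref{sec:compatibilityI} --- tracking $\deg-\degm$ across the diagonal $\Delta_S$ and the genus drop --- these terms remain compatible with the operator $m^{\deg-\degm}\bT_{m,\ell}$. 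The only genus-$g$ contributions come from the non-separating divisor $\delta_{\mathrm{irr}}\cong\overline{M}_{g,2}$ glued by $\Delta_S$, and from separating strata splitting off a $\beta=0$ component of genus~$\leq 1$.

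I would then collapse this genus-$g$ part to a single descendant. Expanding $\Delta_S$ by Künneth and removing the resulting $H^2$-insertions by the divisor equation and the $\one$-insertions by the string/forgetful relation, the reduced-dimension constraint forces every surviving interior term onto $\overline{M}_{g,1}$ with one point insertion and a degree-$(g-1)$ tautological class. By Proposition~\ref{prop:trr}(ii) this class equals a multiple of $\psi_1^{g-1}$ plus a boundary-supported class; the boundary part drops the genus below $g$ and is handled by the induction hypothesis, while $\psi_1^{g-1}$ yields exactly $\bF_{g,m}\big(\tau_{g-1}(\pt)\big)$. Thus the identity takes the shape
\[ \bF_{g+1,m}\big(\lambda_{g+1};\emptyset\big) = c\,\bF_{g,m}\big(\tau_{g-1}(\pt)\big) + (\text{contributions of genus}<g) \]
for a universal constant $c$, and since the left-hand side and every genus-$<g$ term satisfy the multiple cover formula, so does $\bF_{g,m}\big(\tau_{g-1}(\pt)\big)$ as soon as $c\neq 0$.

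The main obstacle is precisely the nonvanishing of $c$. This coefficient is assembled from the weight of the $\delta_{\mathrm{irr}}$-term carrying $\psi^{g-1}$ on a half-edge in Pixton's formula, together with the constants produced by the $\Delta_S$-splitting, the divisor/string reductions, and the passage from $\overline{M}_{g,2}$ to $\overline{M}_{g,1}$; for $g=1$ it reproduces the factor $\tfrac{1}{10}$ of Proposition~\ref{prop:initial}. I expect $c$ to be a $\lambda$-Hodge integral of Bernoulli type, and I would extract its nonvanishing either directly from the $r$-polynomiality underlying $\dr_{g+1}(\emptyset)$ or from known evaluations of $\lambda_g\lambda_{g-1}$ integrals. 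Verifying that all the scattered genus-$g$ strata truly collapse to this one descendant, and that the surviving constant does not vanish, is the delicate point; the rest are the same formal consequences of the splitting axiom and the dimension constraint already used in Proposition~\ref{prop:initial}.
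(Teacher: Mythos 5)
Your overall strategy is the paper's: start from the KKV input $\bF_{g+1,m}\big(\lambda_{g+1};\emptyset\big)$, expand $\lambda_{g+1}=(-1)^{g+1}\dr_{g+1}(\emptyset)$ by the boundary formula of \cite{JPPZ17}, peel off lower-genus terms using the splitting property of the reduced class (whose compatibility with $m^{\deg-\degm}\bT_{m,\ell}$ is exactly the Section~\ref{sec:compatibilityI} bookkeeping), and collapse the genus-$g$ part to $c\,\bF_{g,m}\big(\tau_{g-1}(\pt)\big)$ via Proposition~\ref{prop:trr}. But the argument as proposed has a genuine gap precisely where you flag ``the delicate point'': the nonvanishing of $c$ is not an afterthought to be extracted later, it is the actual content of the proof, and ``I expect $c$ to be of Bernoulli type'' does not close it. The paper pins $c$ down as a product of two explicitly nonzero factors: (a) the coefficient of the nonseparating divisor $\delta$ in Pixton's formula, namely $\frac{1}{2}\big[-\frac{1}{(g+1)!}\sum_{w=0}^{r-1}\big(\frac{w^2}{2}(\psi_h+\psi_{h'})\big)^g\big]_{r^1}$, nonzero by Faulhaber's formula; and (b) the coefficient in $(\psi_1+\psi_2)^g = c'\, j_*(\psi^{g-1}) + B$ on $\Mbar_{g,2}$, where $B$ is supported on strata all of whose vertices have genus $<g$. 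The key mechanism you are missing is how to \emph{isolate} $c'$: since $\lambda_g\lambda_{g-1}$ vanishes on $\Mbar_{g,2}\setminus M^{rt}_{g,2}$, pairing the relation with $\lambda_g\lambda_{g-1}$ kills $B$ and gives
\begin{equation*}
\int_{\Mbar_{g,2}}(\psi_1+\psi_2)^g\lambda_g\lambda_{g-1} \;=\; c'\int_{\Mbar_{g,1}}\psi_1^{g-1}\lambda_g\lambda_{g-1}\,,
\end{equation*}
with the left side nonzero by \cite[Lemma 8]{JPPZ17}. Citing ``known $\lambda_g\lambda_{g-1}$ evaluations'' is the right instinct, but without the vanishing-off-rational-tails isolation step the constant remains inaccessible.

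A second, related error makes your version of the nonvanishing strictly harder than it needs to be: you list, as genus-$g$ sources, ``separating strata splitting off a $\beta=0$ component of genus $\leq 1$.'' These do not occur in $\dr_{g+1}(\emptyset)$ at all. For the empty ramification vector every separating edge carries weight $0 \bmod r$, so its edge factor $\big(1-\exp(-\tfrac{w(h)w(h')}{2}(\psi_h+\psi_{h'}))\big)/(\psi_h+\psi_{h'})$ vanishes identically; and stability excludes any multi-edge graph with only nonseparating edges that retains a genus-$g$ vertex (such a graph would be a cycle with unmarked genus-$0$ valence-two vertices). Hence the \emph{unique} genus-$g$ term is the $\delta_{\mathrm{irr}}$ one, which is what lets $c$ factor as the product of the two controlled constants above. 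If the spurious separating contributions were present, $c$ would be a sum of several uncomputed terms carrying $\bF_{g,m}\big(\tau_{g-1}(\pt)\big)$, and a product-of-nonzero-factors argument would no longer suffice — a priori cancellation could occur. So the proposal needs both corrections: delete the separating strata from the genus-$g$ list, and supply the $\lambda_g\lambda_{g-1}$ isolation argument for $c'\neq 0$.
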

\begin{proof} Let $\delta\in R^1(\Mbar_{g})$ be the boundary divisor corresponding to a curve with nonseparating node. Denote two half edges as $h$ and $h'$. Recall that $(-1)^g\lambda_g$ is equal to the double ramification cycle $\dr_g(\emptyset)$ with the empty condition. We use this formula for genus $g+1$. By \cite[Theorem 1]{JPPZ17},
\begin{align*} &(-1)^{g+1}\lambda_{g+1} = \dr_{g+1}(\emptyset) \\
&= \frac{1}{2}\Big[-\frac{1}{(g+1)!}\sum_{w=0}^{r-1}\big(\frac{w^2}{2}(\psi_h+\psi_{h'})\big)^g\Big]_{r^1}\delta+ \textup{ lower genus}\,,\end{align*}
where $[\cdots]_{r^1}$ is the coefficient of the linear part of a polynomial in $r$. The leading term is nonzero by Faulhaber's formula. 

By Proposition \ref{prop:trr} (i) any $\psi$-monomial in $R^{\geq g}(\Mbar_{g,n})$ can be represented by a sum of tautological classes supported on boundary strata without $\kappa$ classes. There is only one graph with a genus~$g$ vertex (with a rational component carrying both markings). The graph is decorated with a polynomial of degree~$g-1$ in $\psi$- and $\kappa$-classes. By Proposition \ref{prop:trr} (ii) this tautological class can be represented by a sum of a multiple of $\psi^{g-1}$ and tautological classes supported on boundary strata.
We find that \footnote{The number under each vertex is the genus and legs correspond to markings.}
\begin{equation*}
    (\psi_1+\psi_2)^g = c\,\begin{tikzpicture}[scale=0.7, baseline=-3pt,label distance=0.3cm,thick,
    virtnode/.style={circle,draw,fill=black,scale=0.05}, 
    nonvirt node/.style={circle,draw,fill=black,scale=0.5} ]
    \node [nonvirt node,label=below:$g$] (A) {};
    \node at (2,0) [virtnode, label=below:$0$] (B) {};
    \draw [-] (A) to (B);
    \node at (2.7,.5) (m1) {};
    \node at (2.7,-.5) (m2) {};
    \node at (3,.5) (n1) {$1$};
    \node at (3,-.5) (n2) {$2$};
    \node at (.6,.4) (n3) {$\psi^{g-1}$};
    \draw [-] (B) to (m1);
    \draw [-] (B) to (m2);    
    \end{tikzpicture}
    + \textup{ lower genus}
\end{equation*}
in $R^g(\Mbar_{g,2})$ for some $c\in \BQ$. Therefore, it suffices to prove that $c$ is nonzero. Recall that $\lambda_g\lambda_{g-1}$ vanishes on $\Mbar_{g,n}\setminus M^{rt}_{g,n}$, so
\begin{equation*}
    \int_{\Mbar_{g,2}}(\psi_1+\psi_2)^g\lambda_g\lambda_{g-1} = c\,\int_{\Mbar_{g,1}}\psi_1^{g-1}\lambda_g\lambda_{g-1}\,.
\end{equation*}
The left hand side of the equation is nonzero by \cite[Lemma 8]{JPPZ17}, which concludes the proof.
\end{proof}

We now consider the case of genus two. By the Getzler--Ionel vanishing on $\Mbar_{2,n}$, the dimension constraint, and the divisor equation any descendent insertion reduces to the following three cases:
\[\bF_{2,m}\big (\tau_1(\pt)\big )\,, \quad \bF_{2,m}\big( \tau_0(\pt)^2 \big)\,, \quad \bF_{2,m}\big(\tau_1(\gamma)\tau_0(\pt)\big) \quad  \text{with }\gamma\in H^2(S)\,. \]
The first case is treated in Proposition~\ref{prop:initialfromKKV} and follows from the KKV formula in genus three and lower genus. The second case for $m=2$ is treated as part of the proof of Theorem~\ref{thm:qmod} in Section~\ref{sec:ProofQmodHAE}. We use the double ramification relation~\eqref{eq:DR} to prove the multiple cover formula for the third case. The point class $\pt$ will be obtained as the product of $F$ and $W$.
\begin{proposition}\label{prop:genustwo}
For $\gamma\in H^2(S)$, the generating series $\bF_{2,m}(\tau_1(\gamma)\tau_0(\pt))$ satisfies Conjecture~\ref{conj:mcf}.
\end{proposition}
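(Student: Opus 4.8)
The plan is to isolate $\bF_{2,m}(\tau_1(\gamma)\tau_0(\pt))$ from a single double ramification relation \eqref{eq:DR} in genus~$2$ and codimension $c=3$ on $\overline{M}_{2,2}(S,\beta)$ with $\beta=mB+hF$, and then to match every other term against cases for which the multiple cover formula is already established. The point class is produced exactly as the hint suggests: after pulling back by $\varphi_L$ one has $\xi_i^2=\ev_i^*\big(c_1(L)^2\big)$, and writing $c_1(L)=pW+(q-p)F$ for a line bundle $L$ with $c_1(L)=pB+qF$ and using $W^2=F^2=0$, $\langle W,F\rangle=1$, the cross term of this square is precisely $2p(q-p)\,\ev_i^*(W\cup F)=\langle L,L\rangle\,\ev_i^*\pt$. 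So choosing $L$ with $\langle L,L\rangle\neq 0$ realizes $\pt=W\cup F$ inside $\xi_i^2$.

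Concretely I would cap the codimension-$3$ class $P^3_{2,A,d}(L)$ with $[\overline{M}_{2,2}(S,\beta)]^{red}$ and with the single extra insertion $\ev_1^*\gamma$, integrate, and use polynomiality in $a_1,a_2$ (subject to $a_1+a_2=d$) to extract the coefficient of $a_1^2a_2^2$. Since the per-marking exponentials factorize, the trivial graph contributes at codimension~$3$ exactly
\[
\tfrac14\,\psi_1\xi_2^2+\tfrac14\,\xi_1^2\psi_2-\tfrac18\,\eta\,\psi_1\psi_2\,.
\]
After pulling back and pairing with $\ev_1^*\gamma$: the first term gives $\tfrac14\langle L,L\rangle\,\bF_{2,m}(\tau_1(\gamma)\tau_0(\pt))$; the second vanishes because $\gamma\cup\pt\in H^6(S)=0$; and the third, using $\eta=\pi_*\big(f^*c_1(L)\big)^2=\langle L,L\rangle\,\pi_*\ev^*\pt$, identifies $\eta$ with the insertion of an auxiliary $\tau_0(\pt)$ marking, so that by the projection formula and the dilaton equation it equals $-\tfrac12\langle L,L\rangle\,\bF_{2,m}(\tau_1(\gamma)\tau_0(\pt))$ up to boundary. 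The two genus-$2$ contributions combine with total coefficient $-\tfrac14\langle L,L\rangle\neq 0$, so the target potential is isolated with nonzero coefficient.

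It then remains to treat the boundary graphs of $P^3_{2,A,d}$ together with the $\psi$-comparison corrections produced in the $\eta$-step. By the splitting of the reduced class (Section~\ref{sec:compatibilityI}), each boundary stratum of $\overline{M}_{2,2}$ either has the nonzero curve class on a factor of genus $\leq 1$, covered by Proposition~\ref{prop:initial}, or has a genus-$2$ factor whose complementary vertex carries the ordinary virtual class; since ordinary K3 invariants vanish in nonzero classes, the latter forces that vertex to be contracted, and applying the topological recursion and the degree $g-1$ reductions of Proposition~\ref{prop:trr} together with the divisor and dilaton equations reduces the genus-$2$ factor to $\bF_{2,m}(\tau_1(\pt))$, which satisfies Conjecture~\ref{conj:mcf} by Proposition~\ref{prop:initialfromKKV}. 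Thus every term other than the target already obeys the multiple cover formula.

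Finally I would conclude through compatibility of the relation with the multiple cover operator (Section~\ref{sec:compatibilityII}): expanding the non-$\degm$-homogeneous class $\xi=c_1(L)$ into its $W$- and $F$-parts shows that the point-class contribution has $\deg-\degm=0$, consistent with a uniform value of $\deg-\degm$ across the relation, so that applying $m^{\deg-\degm}\bT_{m,2g-2+\degm}$ to the primitive $(m=1)$ relation reproduces the same relation; subtracting, all discrepancies vanish except the target, whose nonzero coefficient forces the multiple cover formula for $\bF_{2,m}(\tau_1(\gamma)\tau_0(\pt))$. I expect the main obstacle to be the bookkeeping of the third paragraph: confirming that all boundary and $\eta$-feedback terms genuinely collapse to the genus~$\leq 1$ cases and to $\bF_{2,m}(\tau_1(\pt))$ with no circular dependence on $\bF_{2,m}(\tau_0(\pt)^2)$, and verifying that the choice of $L$ with $\langle L,L\rangle\neq 0$, whose degree $d=\int_\beta c_1(L)$ varies with $h$, is compatible with the degree homogeneity underlying Section~\ref{sec:compatibilityII}.
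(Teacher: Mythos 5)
Your core device is genuinely different from the paper's: you take a line bundle $L$ with $\langle L,L\rangle\neq 0$ so that the point class appears intrinsically through $\xi_i^2=\langle L,L\rangle\,\ev_i^*(\pt)$ and through $\eta$, whereas the paper deliberately takes $L=\CO_S(F)$, so that $\eta=0$, $\xi_i^2=0$, and the point class is created externally by multiplying the relation with $\ev^*(W)$ (so that $\xi\cdot\ev^*(W)=\ev^*(\pt)$). Unfortunately, this choice is not a cosmetic difference, and it breaks the step you defer to the end. The passage from ``a relation among invariants'' to ``the multiple cover formula for the generating series'' rests on Section~\ref{sec:compatibilityII}, which requires that every term of the relation carries the \emph{same} factor $m^{\deg-\degm}$ and transforms under the \emph{same} operator $\bT_{m,2g-2+\degm}$. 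This holds for $\CO_S(F)$ because $\xi=\ev^*(F)$ is $\degm$-homogeneous with $\deg-\degm=1$, and because $d=\int_\beta c_1(L)=\langle F,mB+hF\rangle=m$ is independent of $h$. For your $L=pB+qF$ with $p(q-p)\neq 0$, both properties fail: (a) $\xi=p\,\ev^*(W)+(q-p)\,\ev^*(F)$ decomposes into pieces with $\deg-\degm=-1$ and $+1$, so there is no uniform value of $\deg-\degm$ across the relation (your assertion to the contrary in the last paragraph is false); and (b) $d=ph+m(q-2p)$ depends on $h$, so the refined relations have $h$-dependent coefficients (for instance, with two markings and $a_2=d-a_1$, terms such as $\tfrac18 a_1^4a_2\,\psi_1^2\xi_2$ contribute $\tfrac{d}{8}\psi_1^2\xi_2$ to the very monomial coefficient you extract) and therefore do not assemble into a single relation among the $q$-series $\bF_{2,m}$ with constant coefficients; the $h$-weighting produces $\Dq$-terms, and $\bT_{m,\ell}$ commutes with $\Dq$ only up to the weight shift $\bT_{m,\ell+2}\Dq=m\,\Dq\bT_{m,\ell}$ of Lemma~\ref{lem:operators}. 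Repairing this would require decomposing the relation into $\degm$-homogeneous, $\Dq$-graded sub-relations and showing each is separately compatible --- an argument you do not supply and which is precisely what the paper's choice $L=\CO_S(F)$ is designed to avoid. Note also that your extraction ``coefficient of $a_1^2a_2^2$'' is not well defined with two markings under the constraint $a_1+a_2=d$; one must substitute $a_2=d-a_1$ and extract a power of $a_1$, which is how the $d$-dependent contamination above enters.

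Two smaller points. First, your $\eta$-bookkeeping has a slip: writing $\eta=\langle L,L\rangle\,p_*\ev_3^*(\pt)$ and using the projection formula, one must compare $\psi$-classes via $p^*\psi_i=\psi_i-D_{i3}$; the correction from $D_{23}$ pushes forward to the target series $X$ itself, so the trivial-graph coefficient is $\tfrac14-\tfrac38+\tfrac18=-\tfrac18$ times $\langle L,L\rangle$, not $-\tfrac14$ (still nonzero, so this alone is not fatal, but it shows the ``boundary'' corrections do not all collapse to known cases --- one of them is the unknown). Second, with $\eta\neq 0$ and $\xi_i^2\neq 0$, far more boundary graphs contribute than in the paper's setup (where the paper's own footnote warns that the choice of monomial killing them is ``very subtle''); in particular, graphs with an $\eta$-decorated genus~$2$ vertex can feed back genus~$2$ point-insertion invariants, and you would need a marking-by-marking verification, analogous to the paper's separate treatment of $\gamma\in U$ and $\gamma\in U^\perp$, that your chosen coefficient annihilates all of them without circular dependence on $\bF_{2,m}(\tau_0(\pt)^2)$.
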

\begin{proof} 
We will use relations in $A_{2+n-3}\left(\Mbar_{2,n}(S,\beta)\right)$:
\[P^3_{2,A,m}(F) = 0 \]
associated to the line bundle $\CO_S(F)$ on~$S$. More precisely, we will distinguish two cases $\gamma\in U$ and\ $\gamma\in U^\perp$ and set respectively
\[ A= (a_1\,, m-a_1)\,,\quad A= (a_1\,,a_2\,, m-a_1-a_2)\,. \]
Refined relations are then obtained by considering particular monomials in the $a_i$, as outlined in the previous section. The $\eta$-class vanishes in this case because $\langle F,F\rangle = 0$ and, for the same reason, $\xi_i^2$ vanishes. Define
\[X = \bF_{2,m}\big(\tau_1(\gamma)\tau_0(\pt)\big)\,.\]
The case $\gamma=F$ is treated first. As explained in Section~\ref{sec:DRrelation}, the tautological relations are polynomial in $a_i$ and we may obtain a refined relation by considering the $[a_1^4]$-coefficient of 
\[\vspace{12pt}P^3_{2,A,m}(F)|_{a_2=m-a_1}\,.\] 

We will only need to consider boundary strata which both:
\begin{itemize}
    \item contribute to $X$ and
    \item contribute to the $[a_1^4]$-coefficient.
\end{itemize}

These two properties simplify the calculation significantly. By the splitting property of the reduced class, a relevant boundary stratum is a tree with one genus~$2$ vertex and contracted genus~$0$ components. The integrals are given by the intersection product of the corresponding insertions. In the case with only two markings, the only relevant stratum is\footnote{The genus~$2$ vertex is represented by a filled node and other nodes represent genus~$0$ vertices. Labeled half-edges correspond to markings.}
\begin{equation*}
    \begin{tikzpicture}[baseline=-3pt,label distance=0.5cm,thick,
    big/.style={circle,fill=black,draw,scale=0.5}, 
    small/.style={circle,draw,fill=black,scale=0.3} ]
    \node [big] (A) {};
    \draw [-] (A) to (2,0);
    \node at (.3,.2) {$h$};
    \node at (1.8,.2) {$h'$};
    \node at (3,.5) {$1$};
    \node at (3,-.5) {$2$};
    \draw (2,0) -- (2.7,.5);
    \draw (2,0) -- (2.7,-.5);
    \end{tikzpicture}\,.
\end{equation*}
The weight factor for this stratum is
\[\frac{w(h)w(h')}{2} = -\frac{m^2}{2}\,. \]
This stratum, therefore, cannot contribute to the $[a_1^4]$-coefficient, since $\psi$-classes on the genus~$0$ component vanish. It remains to determine the contributions from the trivial graph
\begin{equation*}
    \begin{tikzpicture}[baseline=-3pt,label distance=0.5cm,thick,
     big/.style={circle,fill=black,draw,scale=0.6}, 
    virtnode/.style={circle,draw,scale=0.5}, 
    nonvirt node/.style={circle,draw,fill=black,scale=0.5} ]
    \node at (0,-.5)[big] (A) {};
    \node at (-.5,.5) (m1) {$1$};
    \node at (.5,.5) (m2) {$2$};
    \draw [-] (A) to (m1);
    \draw [-] (A) to (m2);    
    \end{tikzpicture}
\end{equation*}
We will order the terms by the total degree $\deg(\psi)$ in the $\psi$-classes.

\begin{enumerate}\setcounter{enumi}{-1}
    \item $\deg(\psi) = 0$. The relation we consider is of codimension three. This case is therefore impossible by virtue of $\xi_i^2=0$.
    \item $\deg(\psi) = 1$. This case results in non-trivial terms, discussed below.
    \item $\deg(\psi) \ge 2$. We may apply Proposition \ref{prop:trr} (i) to reduce to the descendent $\bF_{2,m}\big(\tau_1(\pt)\big)$. This descendent is covered by Proposition~\ref{prop:initialfromKKV}.

\end{enumerate}

Therefore, up to lower genus data, the $[a_1^4]$-coefficient is \[-\frac{1}{2}\psi_1\xi_1\xi_2-\frac{1}{2}\psi_2\xi_1\xi_2\,.\]
Integrating \[\ev^*_2(W) P^3_{2,A,m}(F)|_{a_2=m-a_1}\] against the reduced class, we find (up to lower genus data)
\[- \frac{1}{2} X-\frac{m}{2}\bF_{2,m}\big(\tau_1(\pt) \big)\,,\]
where the second term is obtained by application of the divisor equation. We thus find that $X$ is a linear combination of terms which satisfy Conjecture~\ref{conj:mcf}. Switching the role of $F$ and $W$, we obtain the same result for~$\gamma=W$.

Next, we consider $\gamma\in U^\perp$. The following vanishing of intersection products will be used frequently:
\[ \langle \gamma, F \rangle = 0\,, \quad \langle \gamma, W \rangle = 0\,,\quad  \langle \gamma, \beta \rangle=0\,.\]
 We use a similar argument as above, this time, however, we use three markings and consider the $[a_1^3 a_2]$-coefficient of\footnote{We are grateful to the referee for pointing out a mistake in an earlier version of the text. It has become clear that the choice of monomial, leading to non-trivial relations, is a very subtle one. Symmetry in the $a_i$ and the insertions causes cancellation in many cases. We plan to come back to this in future work.}
\begin{equation}\label{eq:genustwovanishing}\vspace{12pt}
    \ev_1^*(\gamma)\ev_2^*(W)P_{2,A,m}^3(F)|_{a_3=m-a_1-a_2}\,.
\end{equation}
By the above vanishing of intersection products, the only possible trees with non-trivial contribution are 
\begin{equation*}
    \begin{tikzpicture}[baseline=-3pt,label distance=0.5cm,thick,
    big/.style={circle,fill=black,draw,scale=0.6}, 
    virtnode/.style={circle,draw,scale=0.5}, 
    nonvirt node/.style={circle,draw,fill=black,scale=0.5} ]
    \node [big] (A) {};
    \draw [-] (A) to (2,0);
    \node at (2.7,.5) (m1) {$2$};
    \node at (2.7,-.5) (m2) {$3$};
    \node at (-.7,.5) (m3) {$1$};
    \node at (.3,.2) {$h$};
    \node at (1.8,.2) {$h'$};
    \draw (2,0) -- (2.5,.5);
    \draw (2,0) -- (2.5,-.5);
    \draw (A) -- (-.5,.5);
    \end{tikzpicture}\,,\quad 
    \begin{tikzpicture}[baseline=-3pt,label distance=0.5cm,thick,
     big/.style={circle,fill=black,draw,scale=0.6}, 
    virtnode/.style={circle,draw,scale=0.5}, 
    nonvirt node/.style={circle,draw,fill=black,scale=0.5} ]
    \node [big] (A) {};
    \draw [-] (A) to (2,0);
    \node at (2.7,.5) (m1) {$1$};
    \node at (2.7,-.5) (m2) {$3$};
    \node at (-.7,.5) (m3) {$2$};
    \node at (.3,.2) {$h$};
    \node at (1.8,.2) {$h'$};
    \draw (2,0) -- (2.5,.5);
    \draw (2,0) -- (2.5,-.5);
    \draw (A) -- (-.5,.5);
    \end{tikzpicture}\,.
\end{equation*}

The weight factor for the right stratum is
\[\frac{w(h)w(h')}{2} = -\frac{(m-a_2)^2}{2}\,. \]
Since $\psi$-classes on the genus~$0$ component vanish, the power of $a_1$ in any monomial obtained from this stratum is bounded by two. The contribution to the $[a_1^3 a_2]$-coefficient is, therefore, zero.

Next, we explain the contributions from the left stratum. Note that the left vertex is of genus~$2$ with two markings and we may apply the same reasoning as in the discussion for $\gamma=F$ above. Here, the $\deg(\psi)=0$ term $\xi_1\xi_2$ has trivial contribution due to $\langle \gamma,F\rangle = 0$. The $\deg(\psi)=1$ terms $\psi_h \xi_2$, $\psi_h\xi_3$ have vanishing contribution by application of the divisor equation for $\gamma$. Non-trivial contributions are obtained only from 
\[ \psi_1 \xi_2\,,\quad \psi_1 \xi_3\,. \]
These two terms have contributions 
\[-\frac{(m-a_1)^2}{4}a_1^2a_2 X\,,\quad -\frac{(m-a_1)^2}{4}a_1^2a_3X\,.\]
The $[a_1^3 a_2]$-coefficients, however, cancel due to $a_3 = m-a_1-a_2$. It remains to determine the contributions from the trivial graph:
\begin{equation*}
    \begin{tikzpicture}[baseline=-3pt,label distance=0.5cm,thick,
    big/.style={circle,fill=black,draw,scale=0.6}, 
    virtnode/.style={circle,draw,scale=0.5}, 
    nonvirt node/.style={circle,draw,fill=black,scale=0.5},]
    \node at (0,-.5)[big] (A) {};
    \node at (-1,.5) (m1) {$1$};
    \node at (0,.5) (m2) {$2$};
    \node at (1,.5) (m3) {$3$};
    \draw [-] (A) to (m1);
    \draw [-] (A) to (m2);   
    \draw [-] (A) to (m3);    

    \end{tikzpicture}
\end{equation*}
As above, we order the terms by the total degree $\deg(\psi)$ in the $\psi$-classes.

\begin{enumerate}\setcounter{enumi}{-1}
    \item $\deg(\psi) = 0$. The relation we consider is of codimension three. Since $\xi_i^2=0$, the class $\xi_1$ must appear. This term, however, vanishes due to $\langle \gamma, F\rangle=0$.
    \item $\deg(\psi) = 1$. This case results in non-trivial terms corresponding to $\psi_1$ or $\psi_3$, discussed below. The choice of the monomial $[a_1^3a_2]$ excludes the appearance of $\psi_2$.
    \item $\deg(\psi) = 2$. This case results in non-trivial terms corresponding to $\psi_1\psi_3$ or $\psi_3^2$, discussed below. The choice of the monomial $[a_1^3a_2]$ excludes the appearance of $\psi_1^2$.
    \item $\deg(\psi) = 3$. As above, this case reduces to the descendent $\bF_{2,m}\big(\tau_1(\pt)\big)$ which is covered already.
\end{enumerate}

The contributions from $\deg(\psi)\in\{1,2\}$ are:
\begin{align*}
    \psi_1\xi_2\xi_3 \quad\to\quad  &\frac{1}{2}a_1^2 a_2 a_3 \bF_{2,m}\big(\tau_1(\gamma)\tau_0(\pt)\tau_0(F)\big)\\
    =\,&\frac{1}{2} a_1^2a_2 (m-a_1-a_2) m X\,, \\
    \psi_3\xi_2\xi_3\quad\to\quad  &\frac{1}{2}a_3^2 a_2 a_3 \bF_{2,m}\big(\tau_0(\gamma)\tau_0(\pt)\tau_1(F)\big)\\
    =\,&0\,, \\
    \psi_1\psi_3\xi_2 \quad\to\quad  &\frac{1}{2}a_1^2 \frac{1}{2}a_3^2a_2 \bF_{2,m}\big(\tau_1(\gamma)\tau_0(\pt)\tau_1(\one)\big)\\
    =\,&a_1^2a_2 (m-a_1-a_2)^2 X\,, \\
    \psi_1\psi_3\xi_3 \quad\to\quad  &\frac{1}{2}a_1^2\frac{1}{2}a_3^3\bF_{2,m}\big(\tau_1(\gamma)\tau_0(W)\tau_1(F)\big) \\
    =\,&\frac{1}{4}a_1^2(m-a_1-a_2)^3 X + \text{ (lower genus)}\,, \\
    \psi_3^2\xi_2 \quad\to\quad & \frac{1}{8} a_3^4a_2 \bF_{2,m}\big(\tau_0(\gamma)\tau_0(\pt)\tau_2(\one)\big)\\
    =\,& \frac{1}{8} a_2 (m-a_1-a_2)^4 X\,, \\
    \psi_3^2\xi_3 \quad\to\quad &\frac{1}{8} a_3^4a_3 \bF_{2,m}\big(\tau_0(\gamma)\tau_0(W)\tau_2(F)\big)\\
    =\,&0\,.
\end{align*}
The third calculation uses the dilaton equation. All of the other calculations are obtained by application of the divisor equation. Additionally, the fourth calculation involves Proposition \ref{prop:trr}. The only stratum with a genus~$2$ vertex (i.e.\ with both markings on a contracted genus~$0$ component) has vanishing contribution due to $\langle \gamma, F\rangle = 0$ and, therefore, the relation reduces to lower genus descendents. The total contribution to $[a_1^3a_2]$ is
\[-\frac{1}{2}mX-2mX+\frac{3}{2}mX-\frac{1}{2}mX  = -\frac{3}{2}mX\,.\]
We find that $X$ is a linear combination of terms which satisfy Conjecture~\ref{conj:mcf}.

\end{proof}
\begin{remark}
 In fact, for $\gamma\in U^\perp$ the above generating series vanishes (and thus trivially satisfies the multiple cover formula). A proof in the primitive case is given in~\cite[Lemma 4]{B19}.
\end{remark}

\section{Proof of Theorem \ref{thm:qmod} and \ref{thm:hae}}\label{sec:ProofQmodHAE}

\subsection{Proof of Theorem \ref{thm:qmod}}
The proof proceeds via induction on the pair $(g,n)$ ordered by the lexicographic order: $(g',n')<(g,n)$ if 
\begin{itemize}
    \item $g'<g$ or
    \item $g'=g$ and $n'<n$\,.
\end{itemize}
Recall the dimension constraint of insertions:
\[g+n = \deg(\alpha) + \sum_i \deg (\gamma_i)\,.\]
We separate the proof into several steps.

\noindent\textbf{Case 0.} The genus~$0$ case is covered by Proposition~\ref{prop:initial}. This serves as the start for our induction.

\noindent\textbf{Case 1.} If all cohomology classes $\gamma_i$ satisfy $\deg(\gamma_i)\leq 1$, then $\deg(\alpha)\geq g$ and by the strong form of Getzler--Ionel vanishing~\cite[Proposition 2]{FP05} we have $\alpha = \iota_* \alpha'$ with $\alpha'\in R^*(\partial\mgn)$ and $\iota\colon \partial\Mbar_{g,n}\to \Mbar_{g,n}$. We are thus reduced to lower $(g,n)$.

\noindent\textbf{Case 2.} Assume $\deg(\alpha)\leq g-2$ or equivalently, there exist at least two descendents of the point class. We use the degeneration to the normal cone of a smooth elliptic fiber:
\[ S \rightsquigarrow S \cup_E (\BP^1\times E)\,.\]

We specialize the point class to the bubble $\BP^1\times E$. Let $C=C'\cup C''$ be the splitting of a domain curve appearing in the degeneration formula in Theorem~\ref{thm:degeneration}. Namely, $C'$ is the component on $S$ and $C''$ is the component on $\BP^1\times E$. We argue that this splitting has non-trivial contribution only for $g(C')<g$. If $g(C') = g$, this forces $C''$ to be a disconnected union of two rational curves. Since the degree of the curve class along the divisor is $\langle 2B+hF, F\rangle = 2$, the two descendents of the point class then force the cohomology weighted partition to be $(1,\one)^2$ on the bubble or, equivalently, $(1,\omega)^2$ for $(S,E)$. This contribution vanishes because there are no curves which can satisfy this condition (if $(1,\omega)^2$ is represented by a generic point in $E^2$, see Corollary~\ref{cor:degeneration}).\medskip

\noindent\textbf{Case 3.} Assume $\deg(\alpha) = g-1$ or equivalently, there exists only one desecendent of the point class. We may thus assume $\gamma_1 = \pt$. If $n=1, g\geq 2$, we can move $\tau_{g-1}(\pt)$ to the bubble and the genus on $S$ drops.

When $n\geq 2$, moving the point class to the bubble as in Case 2 may not reduce the genus. In particular, moving $\tau_0(\pt)$ to the bubble has non-trivial contribution from rational curves on the bubble. On the other hand, if $a\geq 1$, moving $\tau_a(\pt)$ to the bubble reduces the genus on $S$ because of the dimension constraint.

We use Buryak, Shadrin and Zvonkine's description of the top tautological group $R^{g-1}(M_{g,n})$ \cite{BSZ16}. For any $\alpha\in R^{g-1}(\Mbar_{g,n})$ the restriction of $\alpha$ to $M_{g,n}$ is a linear combination of
\begin{equation}\label{eq:ToptautGp}
    R^{g-1}(M_{g,n}) = \BQ\lan \psi_1^{g-1},\psi_2^{g-1},\ldots,\psi_n^{g-1}\ran
\end{equation}
and the boundary term is also tautological class in $R^{g-1}(\partial \Mbar_{g,n})$. By the divisor equation and subsequent use of (\ref{eq:ToptautGp}), we can reduce to  cases for $\leq(g,2)$. When $g\geq 3$, (\ref{eq:ToptautGp}) has a different basis
\[R^{g-1}(M_{g,2}) = \BQ\lan \psi_1^{g-1},\psi_1\psi_2^{g-2}\ran\]
which is an easy consequence of the generalized top
intersection formula. Therefore, we may assume the descendent of the point class is of the form $\tau_{a}(\pt)$ with $a\geq 1$. Now, specializing this insertion to the bubble $\BP^1\times E$ reduces the genus and hence the same argument as in Case 2 applies. The genus~$2$ case is covered by Proposition~\ref{prop:genustwo}.\medskip

\noindent\textbf{Relative vs.\ absolute.} We reduced to invariants for $(S,E)$ with genus $g'< g$. As explained in the proof of~\cite[Lemma 31]{MPT10} (see also ~\cite{MP06}), the degeneration formula provides an upper triangular relation between absolute and relative invariants for all pairs $\leq (g',n')$. Thus, our induction applies.

\subsection{Proof of Theorem \ref{thm:hae}}
We argue by showing that each induction step in the proof of Theorem~\ref{thm:qmod} is compatible with the holomorphic anomaly equation. Nontrivial step appears when the degeneration formula is used. From the compatibility result Proposition~\ref{prop:compatibility}, we are reduced to proving the relative holomorphic anomaly equation for lower genus relative generating series $\bF^{\rel}_{g',2}$ for $(S,E)$ and relative generating series for $(\BP^1\times E,E)$. The holomorphic anomaly equation for $(\BP^1\times E,E)$ is established in~\cite{OP18}. Because of the relative vs.\ absolute correspondence~\cite{MP06}, we are reduced to proving the holomorphic anomaly equation for $\bF_{g',2}$ in genus $0, 1$ and some genus
~$2$ descendents. We proved the multiple cover formula for these cases in Section~\ref{sec:InitialCondition}, which implies the holomorphic anomaly equation by Proposition~\ref{prop:mcfimplieshae}.

\begin{remark}
Parallel argument shows that we can always reduce the proof for arbitrary descendent insertions to the case when the number of point insertions is less than or equal to $m-1$.
\end{remark}

\section{Examples}

\begin{example}
We compute $\bF_{1,2}\big(\tau_1(F)\big)$ via topological recursion in genus one and illustrate Conjecture~\ref{conj:mcf}. Let $[\delta_0]\in A^1(\Mbar_{1,1})$ be the pushforward of the fundamental class under the gluing map
\[ \Mbar_{0,3}\to \Mbar_{1,1}\,.\]
Since
\[ \psi_1 = \frac{1}{24} [\delta_0] \in A^1(\Mbar_{1,1})\,, \]
we obtain
\begin{align*} \bF_{1,1}\big(\tau_1(F)\big) &= \frac{1}{24} \bF_{0,1}\big(\tau_0(F) \tau_0(\Delta_S)\big) = \frac{1}{12} \bF_{0,1}\big(\tau_0(F) \tau_0(F\times W)\big) \\[8pt]
&= \frac{1}{12} \Dq\bF_{0,1}\,,\end{align*}
where $\Delta_S\subset S\times S$ is the diagonal class. Analogously, 
\[\bF_{1,2}\, \big(\tau_1(F)\big) = \frac{1}{24}\, \bF_{0,2}\big(\tau_0(F) \tau_0(\Delta_S)\big) =\frac{1}{3} \,\Dq\bF_{0,2}\,.\]
Using the multiple cover formula in genus zero
\[ \bF_{0,2} = \bT_2\bF_{0,1} + \frac{1023}{8192}\, \bF_{0,1}(q^2)\,,\]
we obtain
\begin{align*} \bF_{1,2}\big(\tau_1(F)\big)&=\frac{1}{3} \Dq\bF_{0,2}= 2\, \bT_{2}\frac{1}{12}\Dq \bF_{0,1} + \frac{1023}{1024}\, \bB_2 \frac{1}{12}\Dq \bF_{0,1} \\[8pt]
&= 2\, \bT_{2}\bF_{1,1}\big(\tau_1(F)\big) + (2^0-2^{-10})\, \bB_2\bF_{1,1}\big(\tau_1(F)\big)\, ,\end{align*}
in perfect agreement with Conjecture~\ref{conj:mcf} using the formula for $\bT_{2,0}$ from Lemma~\ref{lem:hecke}.
\end{example}

\begin{example}\label{ex:genus2}
We compute $\bF_{2,2}(\tau_0(\pt)^2)$ via degeneration formula and verify the multiple cover formula. The first two terms are computed by the classical geometry of K3 surface in~\cite{OP16}.
For simplicity we write $\bF_{1,2}=\bF_{1,2}(\tau_0(\pt))$. The relative invariants for $(S,E)$ can be written in terms of absolute invariants:
\begin{lemma}
\begin{enumerate}[label=(\roman*)]
    \item $\bF^{\rel}_{0,2}\big(\emptyset\mid(1,\one)^2\big) = 2 \bF_{0,2}$,
    \item $\bF^{\rel}_{1,2}\big(\emptyset\mid(1,\one),(1,\omega)\big)= \bF_{1,2}-2\bF_{0,2}\Dq C_2$,
    \item $\bF^{\rel}_{1,2}\big(\emptyset\mid(2,\one)\big)= \frac{1}{3}\Dq \bF_{0,2}-4C_2\bF_{0,2}$.
\end{enumerate}
\end{lemma}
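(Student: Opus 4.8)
The plan is to prove all three identities by the relative--absolute correspondence for the pair $(S,E)$, trading the conditions along $E$ for absolute descendent insertions on $S$ and collecting the error terms as rubber integrals over $E$. Since $E$ is a smooth elliptic fiber, the bubbles that arise are copies of $\BP^1\times E$, and the relevant rubber contributions reduce, after the localization to $E$ described in Section~\ref{sec:FiberDirection}, to the Gromov--Witten theory of the elliptic curve $E$. By Lemma~\ref{lem:EllipticCurve} the genus-one such integral is governed by $C_2$, which is precisely the source of the non-holomorphic corrections $\Dq C_2$ and $C_2$ on the right-hand sides. Throughout I would use the reduced splitting of Theorem~\ref{thm:degeneration}, which places the reduced structure on the $(S,E)$ factor while the $\BP^1\times E$ factor carries its ordinary virtual class; to keep the bookkeeping honest I would work with the class $[\,\cdot\,]^{vir}+\varepsilon\,[\,\cdot\,]^{red}$ of Proposition~\ref{prop:compatibility}, so that the usual splitting formulas apply verbatim and the genus-zero base factor $\bF_{0,2}$ is produced correctly.

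For (i) the argument is purely enumerative and needs no rubber. The relative space has reduced virtual dimension $g+n+\ell(\mu)-2=0$, and with both relative points weighted by $\one$ no condition is imposed. A generic genus-zero member of the class meets the fixed fiber $E$ transversally in $\beta\cdot F=2$ points, and the only relative datum is an ordering of these two points. Hence the main component of the relative moduli space is a degree-$2$ cover of the absolute one, while the genus-zero rubber terms vanish for dimension reasons, giving $\bF^{\rel}_{0,2}\big(\emptyset\mid(1,\one)^2\big)=2!\,\bF_{0,2}=2\,\bF_{0,2}$.

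For (ii) and (iii) the leading (rubber-free) term comes from the correspondence dictionary: an order-$1$ marking weighted by $\omega$ pins the curve to a fixed point of $E\subset S$ and becomes an absolute $\tau_0(\pt)$; an order-$1$ marking weighted by $\one$ is unconstrained and labels the remaining intersection point in a unique way; and a single order-$2$ tangency weighted by $\one$ produces the descendent $\tau_1(F)$. Thus the leading term of (ii) is $\bF_{1,2}(\tau_0(\pt))=\bF_{1,2}$, and that of (iii) is $\bF_{1,2}(\tau_1(F))$, which equals $\tfrac13\Dq\bF_{0,2}$ by the preceding example. The remaining terms are the genus-one rubber corrections: a genus-zero reduced curve on the $S$-side, contributing $\bF_{0,2}$, glued along the contact locus to a genus-one map into the $\BP^1\times E$ bubble. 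Evaluating the latter through the disconnected rubber series $\bG^{\sim,\bullet}$ and Lemma~\ref{lem:EllipticCurve} (so that the genus-one local elliptic integral is $C_2$), and feeding the contact orders $\mu_i$ through the node $\psi$-class expansions, I expect the corrections $-2\,\bF_{0,2}\Dq C_2$ and $-4\,C_2\,\bF_{0,2}$ respectively. The weights match as a consistency check: $\Dq C_2$ has weight $4$ and $C_2$ weight $2$, producing total weights $-8$ and $-10$ for (ii) and (iii).

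The main obstacle is the precise evaluation of the genus-one rubber contributions, and above all the determination of the rational coefficients $-2$ and $-4$. These arise from the interplay of the contact multiplicities entering (squared) through the node $\psi$-classes, the automorphism and labeling factors $\tfrac{\prod_i\mu_i}{\ell!}$ of the degeneration formula in Corollary~\ref{cor:degeneration}, and the divisor and dilaton equations used to collapse the free $\one$-markings. One must also check that only the genus-one local elliptic series contributes to the correction (higher rubber genus being suppressed because the reduced splitting forces the reduced structure onto the $S$-side), and control the fact that the normal bundle $N_{E/S}$ of a fiber, though of degree $0$, need not be trivial. Once the rubber bracket is identified with $C_2$ via Lemma~\ref{lem:EllipticCurve}, what remains is the delicate but essentially routine coefficient bookkeeping sketched above.
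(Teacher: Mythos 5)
Your overall route is the same as the paper's: the paper disposes of this lemma in one line, as ``a standard computation of the relative vs.\ absolute correspondence''~\cite{MP06}, which is precisely the degeneration/inversion scheme you describe. Your leading-term dictionary is consistent with it: $(1,\omega)\leftrightarrow\tau_0(\pt)$, $(2,\one)\leftrightarrow\tau_1(F)$ with $\bF_{1,2}\big(\tau_1(F)\big)=\tfrac13\Dq\bF_{0,2}$ from the preceding example, the reduced relative dimension count $g+n+\ell(\mu)-2$ is correct, and the $2!$ labelling count in (i), with vanishing genus-zero corrections, matches the stated answer.

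The genuine gap is that you never derive the coefficients $-2$ and $-4$: you write that you ``expect'' the corrections $-2\,\bF_{0,2}\Dq C_2$ and $-4\,C_2\bF_{0,2}$, i.e.\ you read them off the statement being proved. Weight considerations only constrain each correction up to a rational multiple of $\bF_{0,2}\Dq C_2$ resp.\ $C_2\bF_{0,2}$, so for (ii) and (iii) the entire content beyond the leading term is assumed rather than established. To close the gap you must actually run the upper-triangular inversion: for (ii), apply Corollary~\ref{cor:degeneration} to $\bF_{1,2}=\bF_{1,2}\big(\tau_0(\pt)\big)$ with the point specialized to the bubble. The profiles for $m=2$ are $(1,1)$ and $(2)$; since the $E$-projection of a rational curve is constant, connected genus-zero bubble maps occur only in class $2[\BP^1]$ (in particular a connected genus-zero bubble cannot realize the profile $(1,1)$, which also settles your unproved claim that no loop configurations and no higher-genus bubbles contribute $q$-dependence except through genus one). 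The $q$-dependent corrections therefore come from genus-one bubble series such as $\bG^{\rel}_{1,1}\big(\tau_0(\pt)\mid(1,\omega)\big)=\Dq C_2$, computed from elliptic-curve invariants via Lemma~\ref{lem:EllipticCurve}; feeding these, together with the multiplicity weights $\prod_i\mu_i/l!$ of Theorem~\ref{thm:degeneration} and identity (i), into the degeneration formula and solving determines the coefficient $-2$ in (ii), and the parallel computation for $\bF_{1,2}\big(\tau_1(F)\big)$ with profile $(2)$ produces the $-4\,C_2\bF_{0,2}$ in (iii). Without carrying out this (finite, but unavoidable) linear algebra, the proposal is an outline of the paper's cited method rather than a proof of the three identities.
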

\begin{proof}
    It is a standard computation of the relative vs.\ absolute correspondence~\cite{MP06}.
\end{proof}
The relative invariants for $(\BP^1\times E, E)$ can be computed by the Gromov--Witten invariants of $E$.
\begin{lemma}
\begin{enumerate}[label=(\roman*)]
    \item $\bG^{\rel}_{0,1}\big(\tau_0(\pt)\mid(1,\one)\big) = 1$, \, $\bG^{\rel}_{0,1}\big(\emptyset\mid(1,\omega)\big) = 1$,
    \item $\bG^{\rel}_{1,1}\big(\tau_0(\pt)\mid(1,\omega)\big)=\Dq C_2$, \, $\bG^{\rel}_{1,1}\big(\tau_0(\pt)^2\mid(1,\one)\big)=2\Dq C_2$,
    \item $\bG^{\rel}_{2,1} \big(\tau_0(\pt)^2\mid(1,\omega)\big)=(\Dq C_2)^2$,
    \item $\bG^{\rel}_{1,2}\big(\tau_0(\pt)^2\mid(2,\omega)\big) = \Dq^2 C_2$, \, $\bG^{\rel}_{1,2}\big(\tau_0(\pt)^2\mid(1,\omega)^2\big) = \Dq^3 C_2$.
\end{enumerate}
\end{lemma}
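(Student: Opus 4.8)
The plan is to reduce each relative invariant of $(\BP^1\times E, E)$ to the $\lambda$-twisted stationary Gromov--Witten theory of the elliptic curve $E$ — exactly as in the fiber-direction analysis of Section~\ref{sec:FiberDirection} (see also~\cite{OP19}) — and then to evaluate the resulting $E$-invariants using the divisor equation together with Lemma~\ref{lem:EllipticCurve} and Proposition~\ref{lem:EllipticClosedForm}. The starting observation is geometric: since $E$ has genus one, there are no nonconstant maps $\BP^1\to E$, so on any stable map to $\BP^1\times E$ of class $mB+hF$ the entire $E$-degree $h$ and the entire geometric genus must be carried by \emph{vertical} components, i.e.\ components mapping to fibers $\{x\}\times E\cong E$ of the projection to $\BP^1$. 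The \emph{horizontal} part is a genus-$0$ configuration of $\BP^1$-degree $m$ meeting the relative divisor $\{\infty\}\times E$ with profile $\unmu$. Running the $\BC^*$-action on the $\BP^1$-factor (or, equivalently, degenerating as in Theorem~\ref{thm:degeneration}), the obstruction bundle along the vertical fibers produces a factor $\lambda_{g'-1}$ on each genus-$g'$ vertical contribution, precisely as in the localization computation of Section~\ref{sec:FiberDirection}.

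For the degree-one cases $m=1$ (parts (i)--(iii)) the horizontal part is a single section $\BP^1\times\{t\}$, which is rigid once $t\in E$ is fixed. The point insertions $\tau_0(\pt)=H_{\BP^1}\boxtimes\omega$ and the relative weights $\omega\in H^2(E)$ each impose one point condition on $E$, and I would track which of them pin the section and which force a vertical genus-one component through a fixed point of $E$. Concretely: in (i) there are no vertical components, $h=0$, and the count is the single rigid section, giving $1$; in (ii) the section is pinned (by the relative $\omega$ in the first case, by one of the two point insertions in the second, which explains the factor $2$), and the remaining point insertion forces one vertical genus-one curve through a fixed point, attached to the section at the node over $\omega$; in (iii) neither point insertion pins the (relatively constrained) section, so both points sit on \emph{two} independent vertical genus-one components, and the contribution factorizes as a product — hence $(\Dq C_2)^2$ rather than a single connected genus-two invariant.

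The evaluation of the vertical contributions is then uniform. Each vertical genus-one component carries $\lambda_0=\one$ and a number $k$ of point classes $\omega$ equal to the absolute point insertions plus the relative markings meeting it (one from the gluing node, one from each $\tau_0(\pt)$ it absorbs). By iterating the divisor equation for $\omega\in H^2(E)$ and using $\bF^E_1(\tau_0(\omega))=C_2$ from Lemma~\ref{lem:EllipticCurve}, I get
\[ \lan \tau_0(\omega)^k\ran^E_{1,d} = d^{\,k-1}\sigma_1(d)\, , \qquad \sum_{d\ge 0}\lan\tau_0(\omega)^k\ran^E_{1,d}\,q^d = \Dq^{\,k-1}C_2\, . \]
This yields $\Dq C_2$ (with $k=2$: node $+$ one insertion) in (ii), the product $(\Dq C_2)^2$ in (iii), and provides the consistency checks for (iv): the profile $(2,\omega)$ contributes one relative $\omega$ and two absolute ones, so $k=3$ and the answer is $\Dq^2 C_2$, while $(1,\omega)^2$ contributes two relative $\omega$'s and two absolute ones, so $k=4$ and the answer is $\Dq^3 C_2$.

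The main obstacle is the $m=2$ case (iv), where the horizontal part is a degree-two cover of $\BP^1$ with ramification profile $(2)$ or $(1,1)$ over the relative divisor, rather than a single rigid section. Here one must verify that the two sheets are linked — through the total ramification in the profile $(2)$ case, and through the shared fixed point $\omega$ in the $(1,\omega)^2$ case — so that the vertical structure organizes into a \emph{single} connected genus-one $E$-curve absorbing both absolute markings (consistent with the counts above), and that the multiplicities $\mu_i$, the automorphism factors, and the gluing over the relative divisor combine to give the clean leading coefficient $1$. Checking that these normalization factors cancel, via a careful bookkeeping of the rubber contributions and the comparison of $\psi^{\rel}$-classes at the relative markings as in~\cite{OP19}, is the delicate step; everything else reduces to the elliptic-curve computation already recorded in Lemma~\ref{lem:EllipticCurve} and Proposition~\ref{lem:EllipticClosedForm}.
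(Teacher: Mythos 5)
The paper states this lemma without any proof (it appears bare inside Example~\ref{ex:genus2}), so there is no argument of the authors to compare against; your proposal must stand on its own, and it has two genuine gaps. First, your obstruction-theoretic justification is wrong in mechanism. The factor $\lambda_{g'-1}$ in Section~\ref{sec:FiberDirection} comes from the \emph{reduced} obstruction theory of the K3 surface; the theory of $\BP^1\times E$ is the ordinary (non-reduced) one, and the same naive analysis there would attach $e\big(\BE^\vee\otimes T_z\BP^1\big)$, i.e.\ a class of the form $\pm\lambda_{g'}$ rather than $\lambda_{g'-1}$, to a genus-$g'$ vertical component --- which would give the \emph{wrong} values (e.g.\ $-\lambda_1$-integrals instead of plain counts in part~(ii)). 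The actual reason the naive enumerative counts compute the invariants in (i)--(iii) is that the relevant loci of stable maps are smooth of expected dimension: the two-dimensional space $H^1(C,f^*T_{\BP^1\times E})$ at a configuration (section)\,$\cup$\,(genus-one vertical curve) is exactly absorbed by the obstructed deformations of the domain (smoothing the node and deforming the complex structure of the vertical elliptic curve), so the virtual class is the fundamental class. This unobstructedness is the substantive point and is neither stated nor proved in your proposal; once it is, the divisor-equation bookkeeping $\lan\tau_0(\omega)^k\ran^E_{1,d}=d^{k-1}\sigma_1(d)$ and the count of configurations (including the factor $2$ in (ii) and the product in (iii)) are correct.

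Second, part (iv) is not merely "delicate bookkeeping"; your starting dichotomy fails there. For $m=2$ there exist \emph{irreducible} genus-one curves of bidegree $(2,h)$ --- elliptic curves mapping two-to-one to $\BP^1$ and by a degree-$h$ map to $E$ --- so it is false that all $E$-degree and all genus sit on fiber components. With generic point representatives (insertions in distinct fibers), one checks that the section-plus-vertical configurations you describe contribute \emph{nothing} in genus one, and these diagonal components carry the whole invariant. Your picture of a single vertical genus-one curve linking the sheets corresponds instead to choosing both point insertions in the \emph{same} fiber; but then, for the profile $(2,\omega)$, the horizontal double cover has a one-parameter modulus (its finite branch point), the locus has excess dimension, and the contribution is an obstruction-bundle integral rather than a transverse count --- and the bidegree-$(2,h)$ components still appear as well. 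So the step you defer is not a normalization check: it requires either handling the diagonal components directly (say by the product formula or a degeneration of $\BP^1$), or an excess-intersection computation, and without it parts (iv) --- and hence the lemma --- are unproved.
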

Consider the degeneration where two point insertions move to the bubble $\BP^1\times E$. By Theorem~\ref{thm:degeneration}, 
\begin{align*}
\bF_{2,2}\big(\tau_{0}(\p)^2\big) &= \big(\bF_{1,2} - 2\bF_{0,2} \Dq C_2\big)4\Dq C_2 +\big(\frac{1}{3}\Dq\bF_{0,2} - 4C_2 \bF_{0,2}\big)2\Dq^2 C_2 \\
&+ (2\bF_{0,2})\frac{1}{2}\big(\Dq^{3}C_2+4(\Dq C_{2})^2\big)\\
&=36q+8760q^2+754992q^3+36694512q^4 + \cdots \,.
\end{align*}
On the other hand, the primitive generating series \[\bF_{2,1}\big(\tau_0(\pt)^2\big)=\frac{\big(\Dq C_2\big)^2}{\Delta(q)}\] is computed in \cite{BL00} and one can apply the multiple cover formula to obtain a candidate for $\bF_{2,2}\big(\tau_{0}(\p)^2\big)$. The first few terms of the two generating series match. It is enough to conclude that the two generating series are indeed equal because the space of quasimodular forms with given weight is finite dimensional. However, it seems non-trivial to match the above formula from the degeneration with the formula provided by Conjecture~\ref{conj:mcf}.
\end{example}

\appendix 
\section{A proof of degeneration formula}\label{app:degeneration}
For a self-contained exposition, we present a proof of the degeneration formula which is parallel to the proof in \cite{MPT10,MP13}. When $m=1,2$, a proof using symplectic geometry was presented in \cite{LL05}.

\subsection*{Perfect obstruction theory}

For simplicity assume $n=0$. General cases  easily follow from this case.
Let $\epsilon\colon \CS\to \BA^1$ be the total family of the degeneration and \[\mg(\epsilon,\beta)\to \BA^1\] be the moduli space of stable maps to the expanded target $\widetilde{\CS}$. For the relative profile $\mu$, the embedding \[\iota_{\mu}\colon \mg(\CS_0,\mu)\hookrightarrow \mg(\epsilon,\beta)\] can be realized as a Cartier pseudo-divisor $(L_\mu,s_\mu)$. 

Let $\Ee_\epsilon\to \BL_{\mg(\epsilon,\beta)}$ be the perfect obstruction theory constructed in \cite{Li02}. 
Then the perfect obstruction theories $\Ee_0$ and $\Ee_\mu$ of $\mg(\CS_0,\beta)$ and $\mg(\CS_0,\mu)$ sit in exact triangles
\begin{align*}
     & L^\vee_0\to \iota_0^*\Ee_\epsilon \to \Ee_0 \xrightarrow{[1]} \\
    & L^\vee_\mu\to\iota_\mu^*\Ee_\epsilon\to \Ee_\mu\xrightarrow{[1]} \,.
\end{align*}
On $\mg(\CS_0,\mu)$, the perfect obstruction theory splits as follows. Let $\Ee_1$ and $\Ee_2$ be the perfect obstruction theory of relative stable map spaces $\mg(S/E,\beta_1)_\mu$ and $\mg(\BP^1\times E/E,\beta_2)_\mu$ respectively. There exists an exact triangle
\begin{equation}\label{eq:splitObs}
    \bigoplus_{i=1}^{l(\mu)}(N^\vee_{\Delta_E/E\times E})_i\to \Ee_1 \boxplus \Ee_2 \to \Ee_\mu \xrightarrow{[1]}
\end{equation}
where $(N^\vee_{\Delta_E/E\times E})_i$ is the pullback of the conormal bundle of the diagonal $\Delta_E\subset E\times E$ along the $i$-th relative marking.

\subsection*{Reduced class}

Let $\rho\colon\TCS\to S\times \BA^1\to S$ be the projection. By pulling back the holomorphic symplectic form on $S$ via $\rho$, one can define a cosection of the obstruction sheaf of $\Ee_\epsilon$ \[Ob_{\mg(\epsilon,\beta)}\to \CO\,,\] see \cite[Section 5]{KL11}. Dualizing the cosection gives a morphism \[\gamma\colon \CO[1]\to \Ee_\epsilon\,.\] Let $\Ee^{\red}_\epsilon$ be the cone of $\gamma$ which gives the reduced class on $\mg(\epsilon,\beta)$. Similarly we can construct \[\gamma_{\textup{rel}}\colon \CO[1]\to \Ee_1\] for the moduli space of relative stable maps $\mg(S/E,\beta)$.

\subsection*{Degeneration formula for reduced class}

Restricting $\gamma$ to $\mg(\CS_0,\beta)$ and $\mg(\CS_0,\mu)$, we get
\begin{align*}
    & \gamma_0\colon \CO[1]\to \iota^*_0 \Ee_\epsilon\to \Ee_0\\
    & \gamma_\mu\colon \CO[1]\to \iota^*_\mu \Ee_\epsilon \to \Ee_\mu
\end{align*}
where the compositions induce reduced classes. The exact triangles
\begin{align*}
    & L_0^\vee\to \iota_0^*\Ee_\epsilon^{\red}\to \Ee^{\red}_0\xrightarrow{[1]}\,,\\
    & L_\mu^\vee \to \iota_\mu \Ee_\epsilon^{\red}\to \Ee^{\red}_\mu \xrightarrow{[1]} \,,
\end{align*}
still hold.
\begin{lemma}\label{lem:potDegeneration}
We have an exact triangle 
\[N^\vee_{\Delta_{E^l}/E^l\times E^l}\to \Ee_1^{\red}\boxplus \Ee_2 \to \Ee^{\red}_\mu\xrightarrow{[1]}\]
on $\mg(\CS_0,\mu)$ compatible with the structure maps to the cotangent complex.
\end{lemma}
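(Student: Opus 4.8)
The plan is to obtain the reduced triangle by taking cones of the cosection in the non-reduced splitting~\eqref{eq:splitObs}, the point being that the cosection acts only on the $S$-factor. First I would rewrite~\eqref{eq:splitObs}: since $\mg(\CS_0,\mu)$ is the fibre product over the diagonal $\Delta_{E^l}\subset E^l\times E^l$, the left-hand term is $\bigoplus_{i=1}^{l(\mu)}(N^\vee_{\Delta_E/E\times E})_i = N^\vee_{\Delta_{E^l}/E^l\times E^l}$, so the triangle reads $N^\vee_{\Delta_{E^l}/E^l\times E^l}\to \Ee_1\boxplus\Ee_2 \xrightarrow{g}\Ee_\mu\xrightarrow{[1]}$. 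Writing $c_1=\gamma_{\rel}\boxplus 0\colon\CO[1]\to\Ee_1\boxplus\Ee_2$ and $c_2=\gamma_\mu\colon\CO[1]\to\Ee_\mu$, and setting $\Ee_1^{\red}=\mathrm{cone}(\gamma_{\rel})$, the reduced theories are the cones of these cosections.

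The geometric crux, which I would establish next, is the identity $g\circ c_1 = c_2$: the cosection on $\Ee_\mu$ comes from $\gamma_{\rel}$ on the $\Ee_1$-summand and the trivial cosection on the $\Ee_2$-summand. Both cosections are built by pairing the obstruction space against the pullback of the holomorphic symplectic form $\sigma_S$ under $\rho\colon\TCS\to S$. On any stable map to the expanded degeneration, $\rho$ contracts each bubble $\BP^1\times E$ onto the fibre $E\hookrightarrow S$, and since $\sigma_S|_E\in H^0(E,\Omega^2_E)=0$ the pulled-back form vanishes on the bubble components. Consequently, restricting the cosection along $\iota_\mu$ and decomposing through~\eqref{eq:splitObs}, it acts trivially on $\Ee_2$ and on the conormal term, while on $\Ee_1$ it is exactly $\gamma_{\rel}$; this is the identity $g\circ c_1=c_2$.

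Granting this, I would finish by the octahedral axiom applied to $\CO[1]\xrightarrow{c_1}\Ee_1\boxplus\Ee_2\xrightarrow{g}\Ee_\mu$ with total map $c_2$. Here $\mathrm{cone}(c_1)=\Ee_1^{\red}\boxplus\Ee_2$, since the cone of $(\gamma_{\rel},0)$ splits off the untouched summand $\Ee_2$; $\mathrm{cone}(c_2)=\Ee_\mu^{\red}$ by definition; and $\mathrm{cone}(g)=N^\vee_{\Delta_{E^l}/E^l\times E^l}[1]$ from~\eqref{eq:splitObs}. The octahedral triangle $\mathrm{cone}(c_1)\to\mathrm{cone}(c_2)\to\mathrm{cone}(g)\xrightarrow{[1]}$ then rotates to the desired
\[ N^\vee_{\Delta_{E^l}/E^l\times E^l}\to \Ee_1^{\red}\boxplus\Ee_2\to\Ee_\mu^{\red}\xrightarrow{[1]}\,. \]
Compatibility with the structure maps to the cotangent complex is automatic: taking the cone of a cosection $\CO[1]\to\Ee$ does not change the induced map $\Ee\to\BL$, so the octahedral construction refines to a diagram over $\BL_{\mg(\CS_0,\mu)}$.

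The hard part will be the cosection identity $g\circ c_1=c_2$. Although the vanishing $\sigma_S|_E=0$ makes it plausible, one has to track the cosection through Li's relative obstruction theory and the derived splitting~\eqref{eq:splitObs}, and check both that the conormal summand $N^\vee_{\Delta_{E^l}/E^l\times E^l}$ receives the zero cosection and that the surviving map on $\Ee_1$ is genuinely $\gamma_{\rel}$ rather than a nonzero multiple or twist of it. This amounts to a local computation with the symplectic pairing on the $H^1$ of the restricted tangent complex, decomposed according to the two sides of the relative divisor $E$.
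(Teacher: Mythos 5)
Your proposal is correct and follows essentially the same route as the paper: the paper's proof also rests on the commutativity of the square formed by $\gamma_{\textup{rel}}\boxplus 0$ and $\gamma_\mu$ over the splitting triangle~\eqref{eq:splitObs} (justified, as in your argument, by both cosections arising from the holomorphic symplectic form on $S$), and then concludes by passing to cones. Your explicit invocation of the octahedral axiom is just a rephrasing of the paper's map-of-exact-triangles argument, and your geometric explanation of why the cosection acts trivially on the $\Ee_2$-factor merely expands what the paper asserts in one line.
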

\begin{proof}
Consider the diagram of complexes
\begin{equation*}
\begin{tikzcd}[row sep = large, column sep = normal]
&\CO[1]\boxplus 0 \arrow[r, equal] \arrow[d, "\gamma_{\textup{rel}}\boxplus 0"] & \CO[1] \arrow[d, "\gamma_\mu" ] \\ 
\bigoplus_{i=1}^{l(\mu)}(N^\vee_{\Delta_E/E\times E})_i \arrow[r, ""] \arrow[d,equal] &\Ee_1\boxplus \Ee_2 \arrow[r, ""] \arrow[d, ""] &\Ee_\mu \arrow[d, ""]\\
\bigoplus_{i=1}^{l(\mu)}(N^\vee_{\Delta_E/E\times E})_i \arrow[r, ""] & \Ee_1^{\red}\boxplus \Ee_2\arrow[r, ""] & \Ee_\mu^{\red}
\end{tikzcd}
\end{equation*}
where the middle horizontal morphisms are the exact triangle from (\ref{eq:splitObs}). The square on the top commutes because the cosections for $\TCS$ and $(S,E)$ are both coming from the holomorphic symplectic form on $S$.  The vertical morphisms are exact triangles and hence induces a map between cones.  
\end{proof}
Now Theorem~\ref{thm:degeneration} is a direct consequence of Lemma~\ref{lem:potDegeneration}.


\bibliographystyle{myamsplain.bst}
\bibliography{refs.bib}
\end{document}